\pgfplotsset{compat=1.15}
\definecolor{Frangipane}{RGB}{255, 217, 179}
\definecolor{Peach}{RGB}{255, 204, 152}
\newtheorem{theorem}{Theorem}[section]
\newtheorem{cor}[theorem]{Corollary}
\newtheorem{prop}[theorem]{Proposition}
\newtheorem{lemma}[theorem]{Lemma}
\theoremstyle{remark}{}
\newtheorem{rmk}[theorem]{Remark}
\theoremstyle{definition}
\newtheorem{defn}[theorem]{Definition}
\newcommand{\C}{\mathbb C}
\newcommand{\R}{\mathbb R}
\newcommand{\SL}[1]{\mathrm{SL}_2 (#1)}
\newcommand{\Mod }{\mathrm{Mod}}
\newcommand{\Z}{\mathbb Z}
\tiny\color{gray},
\title{Echoes in genus three of Teichm\"uller curves in genus two}
\author{Thomas Le Fils}
\date{}
\begin{document}

\begin{abstract}
We classify the Teichm\"uller curves in the moduli space of genus three Riemann surfaces $\mathcal M_3$ that are obtained by a covering construction from a primitive Teichm\"uller curve in  $\mathcal M_2$.
We describe the action on homology modulo two of the affine groups of translation surfaces generating these primitive curves.
We also classify the $\SL \Z$-orbits of square-tiled surfaces in $\mathcal H(2, 2)$ that are a cover of a genus two one.
\end{abstract}
\maketitle

\section{Introduction}
\subsection{Teichm\"uller curves}
The moduli space of algebraic curves $\mathcal M_g$ is a central object in the geometry of surfaces that parametrises closed Riemann surfaces of genus $g$.
It is endowed with a complex structure and a complete metric, the \emph{Teichm\"uller metric}, that corresponds to its Kobayashi metric.
The geodesics for this metric naturally come in families called \emph{complex geodesics}: for each geodesic $\gamma\colon \R\to \mathcal M_g$, there exists a totally geodesic immersion $f\colon \mathbb H^2\to \mathcal M_g$ such that $\gamma= f_{|i\mathbb R_{>0}}$.
Some exceptional complex geodesics have image of finite area: the \emph{Teichm\"uller curves}\footnote{We only consider Teichm\"uller curves generated by squares of abelian differentials, see \cref{rappels}.}.
A central problem in the theory of Teichm\"uller and flat surfaces is to classify these rare objects, see for example the surveys \cite{M18, MCM23}.

Given a Teichm\"uller curve $V\subset \mathcal M_g$, one can obtain infinitely many others in moduli spaces of curves of higher genera by covering constructions: its \emph{echoes}. 
A Teichm\"uller curve that is not an echo is called \emph{primitive}.
In a series of groundbreaking articles \cite{MCM03, MCM05, MCM06, MCM07}, McMullen gave a complete classification of the primitive Teichm\"uller curves in $\mathcal M_2$. 
There exists an infinite series of Teichm\"uller curves $W_D\subset \mathcal M_2$ called the \emph{Weierstrass curves}. They are indexed by their discriminant, an integer $D\geqslant 5$ such that $D\equiv 0, 1\mod 4$, and their spin $\epsilon \in \{0, 1\}$ when $D > 9$ satisfies $D\equiv 1\mod 8$.
We study the echoes of each Weierstrass curve in $\mathcal M_3$.

\begin{theorem}\label{thWD}
Let $V\subset \mathcal M_2$ be a Weierstrass curve of discriminant $D$. If $D\equiv 5\mod 8$ then $V$ has three echoes in $\mathcal M_3$, and  five otherwise.
\end{theorem}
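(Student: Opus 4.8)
The plan is to reduce the classification of echoes to an orbit-counting problem for the action of the affine group on mod-two homology, and then to carry out that count using the identification $\Sp(4,\mathbb{F}_2)\cong S_6$. First I would pin down which covering constructions can land in $\mathcal M_3$. Let $(X,\omega)$ be a translation surface of genus two generating $V$, so $\omega\in\mathcal H(2)$. Any echo in $\mathcal M_3$ comes from a cover $\pi\colon\widetilde X\to X$, and Riemann--Hurwitz gives $2\cdot 3-2=d(2\cdot 2-2)+R$, i.e.\ $4=2d+R$ with $R\geqslant 0$ the total ramification. Since a covering has degree $d\geqslant 2$, the only solution is $d=2$, $R=0$: the echoes in genus three arise exactly from connected unramified double covers of the generating surfaces. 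Pulling back $\omega$ splits the single double zero into two double zeros, so every such cover lies in $\mathcal H(2,2)$, consistent with the stratum appearing in the statement and in the abstract.

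Second, I would parametrise these covers and identify echoes with orbits. Connected unramified double covers of $X$ correspond bijectively to nonzero classes in $H^1(X;\mathbb{F}_2)\cong H_1(X;\mathbb{F}_2)\cong\mathbb{F}_2^4$, of which there are fifteen. The double cover determined by a class $\alpha$ is preserved up to isotopy by the finite-index subgroup of $\mathrm{Aff}^+(X,\omega)$ stabilising $\alpha$; over the corresponding finite cover of $V$ it is consistently defined, so it generates a Teichm\"uller curve in $\mathcal M_3$ covering $V$. This curve depends only on the $\mathrm{Aff}^+(X,\omega)$-orbit of $\alpha$, and I would check that distinct orbits yield distinct echoes (the only identifications between covers come from affine symmetries of $(X,\omega)$). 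Thus the number of echoes equals the number of orbits of $\mathrm{Aff}^+(X,\omega)$ acting on the fifteen nonzero vectors of $H_1(X;\mathbb{F}_2)$.

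Third, I would translate this into the geometry of Weierstrass points. The symplectic action gives a homomorphism $\mathrm{Aff}^+(X,\omega)\to\Sp(4,\mathbb{F}_2)\cong S_6$, under which the fifteen nonzero vectors correspond to the fifteen pairs of the six Weierstrass points of $X$, with $S_6$ acting by permuting them. Because the double zero is the unique singularity of $\omega$, it is a Weierstrass point fixed by the affine group, so the image lies in the point stabiliser $S_5$ acting on the remaining five Weierstrass points. The orbit count then splits as the number of orbits on these five points plus the number of orbits on their ten pairs.

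The crux---and the main obstacle---is to determine this image as a function of $D\bmod 8$; this is precisely the description of the mod-two homological action advertised as a main result of the paper. Using McMullen's explicit models for $W_D$ together with generators of its affine group, I would compute the induced permutation on the five nonzero Weierstrass points and show that for $D\equiv 5\pmod 8$ the image is transitive but not two-transitive (of order ten, dihedral or cyclic), giving one orbit on the five points and two orbits on their pairs, hence three echoes; whereas for $D\equiv 0,1,4\pmod 8$ the image is intransitive on the five points with orbit pattern producing two point-orbits and three pair-orbits, hence five echoes. Pinning down exactly these images, and matching the arithmetic congruence $D\equiv 5\pmod 8$ to this change in orbit structure, is the delicate step; once it is in hand the theorem follows from the elementary orbit count above.
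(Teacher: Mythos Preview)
Your overall architecture is correct and matches the paper: reduce to unramified double covers, identify them with $H_1(\Sigma,\mathbb F_2)\setminus\{0\}$, and count $\mathrm{Aff}^+(X,\omega)$-orbits. Your reframing through $\Sp(4,\mathbb F_2)\cong S_6$ and Weierstrass pairs is equivalent to what the paper does and is a clean way to organise the count; the paper uses Arnold's identification of mod-two classes with pairs of Weierstrass points to separate the hyperelliptic and odd lifts (five versus ten), which is exactly your ``pairs containing the fixed zero'' versus ``pairs among the other five''.

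Where your proposal diverges from the paper is at the step you yourself flag as delicate: determining the image in $S_5$. You suggest a direct computation of the permutation action from generators of the affine group. The paper instead bounds the monodromy group modulo two from above by exploiting that it must commute with real multiplication by $\mathcal O_D$ (an explicit self-adjoint matrix $T$ on $H_1$) \emph{and} preserve the five hyperelliptic classes; these two constraints already force the commutant to be dihedral of order $8$, $10$, or $12$ according to $D\bmod 8$. The lower bound comes from explicit horizontal, vertical, and (in the $D\equiv 1$ case) diagonal cylinder twists. This is the substantive idea you are missing: without the real-multiplication constraint you have no mechanism to prove the image is \emph{no larger} than what your generators produce, and a bare ``compute the permutation'' will not terminate into a proof. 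Your predicted orbit patterns (transitive $D_5$ for $D\equiv 5$; two point-orbits and three pair-orbits otherwise) are correct, but only because the commutant computation forces them.

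One further point you pass over too quickly: ``distinct orbits yield distinct echoes'' is not automatic. The paper needs to show that a lifted surface has a \emph{unique} nontrivial translation with genus-two quotient (so that any isomorphism of lifts descends), and this fails for $W_9$, which requires a separate argument. You should not assume this step is routine.
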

McMullen showed in \cite{MCM06} that the Weierstrass curves with non-square discriminant account for all primitive Teichm\"uller curves in $\mathcal M_2$, except for one curve, the decagon curve $\mathcal D$.

\begin{theorem}\label{thDecagone}
The curve $\mathcal D$ has three echoes in $\mathcal M_3$.
\end{theorem}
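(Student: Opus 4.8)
The plan is to realise every echo of $\mathcal D$ as an unramified double cover of a generating surface, and then to enumerate such covers modulo the affine group. Fix a surface $(Y,\omega)\in\mathcal H(1,1)$ generating $\mathcal D$ and let $\pi\colon (X,\eta)\to(Y,\omega)$ be a translation cover of degree $d$, ramified only over the two zeros of $\omega$. Riemann--Hurwitz gives $2g_X-2=2d+\sum_i(e_i-1)$, so requiring $g_X=3$ forces $d=2$ with no ramification. Connected unramified double covers of $Y$ are classified by the nonzero classes of $H^1(Y;\mathbb F_2)\cong\mathbb F_2^4$, of which there are $15$, and pulling back $\omega$ places each cover in $\mathcal H(1,1,1,1)\subset\M_3$. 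Invoking the general correspondence between echoes and orbits of covers under the affine group (the mechanism already in play for \cref{thWD}), the theorem reduces to showing that the image of $\mathrm{Aff}^+(Y,\omega)$ in $\Sp_4(\mathbb F_2)$ has exactly three orbits on these $15$ classes.

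The next step is to extract an order-five element of this image from the symmetry of the decagon. The regular decagon carries the rotation $\rho$ by $\pi/5$, an affine automorphism of order $10$ whose fifth power is the hyperelliptic involution; its eigenvalues on $H_1(Y;\Z)\cong\Z^4$ are the primitive tenth roots of unity, so its characteristic polynomial is $\Phi_{10}$. Since $\Phi_{10}\equiv\Phi_5\pmod 2$ and $\Phi_5$ is irreducible over $\mathbb F_2$ (the order of $2$ modulo $5$ being $4$), the reduction $\bar\rho$ is an element of order $5$ acting irreducibly on $\mathbb F_2^4$. Under the exceptional isomorphism $\Sp_4(\mathbb F_2)\cong\Sy 6$, and identifying the $15$ nonzero classes with the $15$ pairs of Weierstrass points of $Y$, the element $\bar\rho$ becomes a $5$-cycle fixing the Weierstrass point lying under the centre of the decagon.

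I would then count orbits. Because $\Phi_5(1)=1\neq0$ in $\mathbb F_2$, the map $\bar\rho$ has no nonzero fixed vector, so every $\langle\bar\rho\rangle$-orbit on the $15$ classes has length $5$ and there are exactly $15/5=3$ of them; concretely these are the pairs containing the fixed Weierstrass point, the adjacent pairs, and the distance-two pairs among the five cyclically permuted points. To finish I must check that the full image does not fuse two of these orbits, which is the main obstacle: I would compute the action on $H_1(Y;\mathbb F_2)$ of the remaining (parabolic) generators of the Veech group of $\mathcal D$ from its cylinder decomposition, and verify that each such multitwist acts trivially modulo $2$, so that the image is exactly $\langle\bar\rho\rangle\cong\Z/5$. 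The delicate point is genuinely the parity of the twist amounts: an order-four element in the image would realise an automorphism $x\mapsto 2x$ of $\Z/5$ and merge the adjacent and distance-two orbits, leaving only two echoes, so the computation must exclude this (orientation-reversing symmetries of the decagon are harmless, since we work with $\mathrm{Aff}^+$, and would at most enlarge the image to the dihedral group $D_5$, which still has three orbits). A final step confirms that the three orbits yield pairwise non-isomorphic Teichm\"uller curves, completing the count.
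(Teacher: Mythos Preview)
Your overall strategy matches the paper's: reduce to counting orbits of the mod-$2$ monodromy group on $H_1(Y,\mathbb F_2)\setminus\{0\}$, and then compute the images of the Veech-group generators $R$ (the rotation by $\pi/5$) and $T$ (the parabolic). Your treatment of $R$ via $\Phi_{10}\equiv\Phi_5\pmod 2$ is correct and arguably cleaner than the paper's direct matrix computation from periods.

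The one genuine error is your prediction for $T$: the parabolic does \emph{not} act trivially modulo $2$. In the paper's basis one computes (via the injective period map)
\[
\rho(T)=\begin{pmatrix} 2&-1&0&0\\ 1&0&0&0\\ 0&0&2&-1\\ 0&0&1&0\end{pmatrix},
\]
which reduces mod $2$ to the involution exchanging $a_i\leftrightarrow b_i$. Hence the image is dihedral of order ten, not $\Z/5$. As you already note in your parenthetical, $D_5$ still has exactly three orbits on the fifteen nonzero vectors, so the final count survives; but your proposed verification step (``each such multitwist acts trivially modulo $2$'') would fail as written, and your attribution of a potential involution in the image solely to orientation-reversing symmetries is mistaken---the involution comes from the orientation-preserving parabolic $T$ itself. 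The paper's proof simply computes both $\rho(R)$ and $\rho(T)$ explicitly, observes that the group they generate mod $2$ is dihedral of order ten, and then checks directly that the orbit graph on $H_1(\Sigma,\Z/2\Z)\setminus\{0\}$ has three connected components, each of size five.
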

Teichm\"uller curves in genus three can be divided into three categories. The first consists of the primitive ones, for which there is a conjectural classification, see \cite[Problem 5.4]{MCM23}. 
The second category is formed by the echoes of Teichm\"uller curves in genus two, and the third consists of the echoes of the only Teichm\"uller curve in genus one. The present article is a contribution to the classification of Teichm\"uller curves in genus three, studying this second category.
\subsection{The flat viewpoint}
A translation surface is a surface obtained by gluing sides of polygons together with translations.
Equivalently, it is the datum of a pair $(X, \omega)$ where $X\in \mathcal M_g$ and $\omega$ is a holomorphic one-form on $X$, see \cref{rappels}.
The group $\SL \R$ acts on the moduli space $\Omega\mathcal M_g$ of translation surfaces of genus $g$, by acting on polygons.
The Veech group of $(X, \omega)$ is its stabiliser for this action, and is denoted $\mathrm{SL}(X, \omega)$. When $\mathrm{SL}(X, \omega)$ is a lattice, $(X, \omega)$ is said to be a Veech surface.
The Teichm\"uller curves are obtained by projecting the orbits of Veech surfaces to $\mathcal M_g$. This produces Teichm\"uller curves, since $\mathbb H^2=\mathrm{SO}_2(\mathbb R)\backslash \mathrm{SL}_2(\mathbb R)$:
\[
\begin{tikzcd}[column sep=huge]
\mathrm{SL}_2(\mathbb R) \arrow{r} \arrow{d} \arrow{d}
  & \Omega\mathcal M_g \arrow{d} \\
 \mathbb H^2/\mathrm{SL}(X, \omega) \arrow{r}  & \mathcal M_g.
\end{tikzcd}
\]
Therefore our results can be expressed as a classification of the $\SL \R$-orbits of genus three Veech surfaces covering a genus two translation surface. For example we can rephrase \cref{thDecagone} as follows.
\begin{cor}\label{corRelDec}
The genus three Veech surfaces covering a genus two surface with two singularities, and not covering a genus one surface, form three $\mathrm{GL}^+_2(\mathbb R)$-orbits.
\end{cor}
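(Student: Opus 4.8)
The plan is to deduce the corollary from \cref{thDecagone} by translating the statement about echoes of $\mathcal D$ into the language of covering surfaces, and by using McMullen's classification to recognise $\mathcal D$ as the unique primitive Teichm\"uller curve in $\mathcal M_2$ whose generating surfaces carry two singularities.

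First I would record the dictionary between echoes and orbits. If $(Y,\eta)$ is a genus three Veech surface carrying a translation covering $\pi\colon (Y,\eta)\to (X,\omega)$ onto a genus two surface, then $(X,\omega)$ is itself a Veech surface, since the Veech property passes through translation covers (Gutkin--Judge, Vorobets): the groups $\mathrm{SL}(Y,\eta)$ and $\mathrm{SL}(X,\omega)$ are commensurable, so one is a lattice if and only if the other is. Thus $(X,\omega)$ generates a Teichm\"uller curve $V\subset \mathcal M_2$, and the curve $V'\subset \mathcal M_3$ generated by $(Y,\eta)$ is by definition an echo of $V$. Conversely, an echo of a genus two Teichm\"uller curve is generated by a genus three Veech surface covering a generating surface of that curve. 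Since each such curve lifts canonically to a single $\mathrm{GL}^+_2(\mathbb R)$-orbit in $\Omega\mathcal M_3$, counting echoes of $V$ is the same as counting $\mathrm{GL}^+_2(\mathbb R)$-orbits of genus three Veech surfaces that cover a generating surface of $V$.

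Next I would pin down $V$. The generating surfaces of the decagon curve lie in $\mathcal H(1,1)$, that is, they have exactly two singularities. Conversely, suppose $(Y,\eta)$ covers a genus two surface $(X,\omega)$ with two singularities and does not cover a torus. By the previous paragraph $(X,\omega)$ is Veech and generates a curve $V\subset \mathcal M_2$. This $V$ is primitive: were it an echo it would cover the unique genus one Teichm\"uller curve, so $(X,\omega)$ would cover a torus, and composing with $\pi$ would make $(Y,\eta)$ cover a torus, contrary to hypothesis. Hence $V$ is a primitive Teichm\"uller curve whose generating surfaces lie in $\mathcal H(1,1)$, and by McMullen's classification \cite{MCM06} the only such curve is $\mathcal D$. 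Thus $(X,\omega)$ is, up to $\mathrm{GL}^+_2(\mathbb R)$, a decagon surface and $(Y,\eta)$ realises an echo of $\mathcal D$.

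It then remains to match the two counts exactly, which is where the main care is needed. The argument above shows every orbit in the corollary is an echo of $\mathcal D$; for the reverse inclusion I must check that an echo of $\mathcal D$ genuinely satisfies the hypotheses, namely that the covered surface can be taken with two singularities (immediate, as the decagon surface lies in $\mathcal H(1,1)$) and that such a $(Y,\eta)$ does not itself cover a torus. The latter point is the crux: a surface covering a torus is square-tiled, hence arithmetic, whereas $(Y,\eta)$ lies in the commensurability class of the decagon surface, whose trace field is $\mathbb Q(\sqrt 5)$ and which is therefore non-arithmetic; since arithmeticity is a commensurability invariant preserved by translation covers, $(Y,\eta)$ cannot cover a torus. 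With both inclusions established the correspondence is a bijection, so by \cref{thDecagone} there are exactly three $\mathrm{GL}^+_2(\mathbb R)$-orbits. The main obstacle is thus on the bookkeeping side---confirming that the covering and non-covering conditions cut out precisely the echoes of $\mathcal D$---rather than in any new geometric input, which is supplied by \cref{thDecagone} and McMullen's theorem.
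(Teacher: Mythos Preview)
Your argument is correct and follows essentially the same route as the paper's: reduce to showing that the genus-two base $(X_0,\omega_0)$ lies in the $\mathrm{GL}^+_2(\mathbb R)$-orbit of the decagon, then invoke \cref{thDecagone}. The only notable difference is the tool used to conclude that the base is Veech and primitive. You appeal to Gutkin--Judge/Vorobets commensurability for Veech-ness and then argue primitivity separately; the paper instead cites M\"oller's result (via \cite[Corollary~2.2]{MCMPrym}) that, since $(Y,\eta)$ does not cover a torus, $\mathrm{SL}(Y,\eta)\subset\mathrm{SL}(X_0,\omega_0)$, which yields both facts at once. Your explicit treatment of the reverse inclusion (that echoes of $\mathcal D$ cannot cover a torus, by non-arithmeticity) is a point the paper leaves implicit.
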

\subsection{Square-tiled surfaces}
Our analysis includes the non-primitive Teichm\"uller curves in $\mathcal M_2$ that are generated by a form in $\mathcal H(2)$: the Weierstrass curves with square discriminant.
They are generated by square-tiled surfaces, that is translation surfaces obtained by gluing unit squares along their sides.
Hubert-Leli\`evre \cite{HL06} and McMullen \cite{MCM05} gave a complete classification of their $\SL \R$-orbits. 
More precisely, they gave a classification of the square-tiled surfaces belonging to this orbit, or in other words, of their $\SL \Z$-orbit. We analyse the lifts of these translation surfaces in genus three.
We say that a square-tiled surface is reduced if the set of its relative periods is $\mathbb Z+i\mathbb Z$.
\begin{theorem}\label{thSTS}
There are ten $\mathrm{SL}_2(\mathbb Z)$-orbits of reduced square-tiled surfaces with $2n$ tiles in $\mathcal H(2, 2)$, that cover a genus two one for all odd $n\geqslant 5$.
There are five orbits if $n\geqslant 4$ is even or $n=3$.
\end{theorem}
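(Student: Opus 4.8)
The plan is to reduce \cref{thSTS} to two inputs: the known count of $\SL\Z$-orbits of the \emph{base} surfaces, and the action on homology modulo two computed earlier in the paper. First I would observe that if a reduced square-tiled surface $(X,\omega)\in\mathcal H(2,2)$ with $2n$ tiles covers a genus two translation surface $(Y,\eta)$ with $\omega$ proportional to the pullback of $\eta$, then Riemann--Hurwitz forces the cover to be an unramified double cover: writing the order of vanishing of a pullback at a point of ramification index $e$ over a zero of order $k$ as $e(k+1)-1$, the identity $2\cdot 3-2 = d(2\cdot 2-2)+\sum_p(e_p-1)$ has $d=2$ with no ramification as its only solution. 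The base must then lie in $\mathcal H(2)$ rather than $\mathcal H(1,1)$, since an unramified cover preserves orders of zeros and two simple zeros would lift to four. The single double zero of $Y$ thus has exactly two preimages, each a double zero, so every connected unramified double cover of a surface in $\mathcal H(2)$ lands in $\mathcal H(2,2)$; and since the absolute periods are unchanged and the two zeros sit at corners of unit squares, such a cover is automatically reduced with $2n$ tiles whenever the base is reduced with $n$ tiles. This turns the problem into classifying, up to $\SL\Z$, the connected unramified double covers of the reduced $n$-square surfaces in $\mathcal H(2)$.

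For the base surfaces I would invoke the Hubert--Leli\`evre and McMullen classification \cite{HL06, MCM05}: the reduced $n$-square surfaces in $\mathcal H(2)$ form two $\SL\Z$-orbits when $n\geqslant 5$ is odd, and a single orbit when $n\geqslant 4$ is even or $n=3$. It therefore suffices to show that over each fixed base orbit there are exactly five $\SL\Z$-orbits of connected double covers; the counts $2\times 5=10$ and $1\times 5=5$ then match the two cases of the statement.

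The heart of the argument is this count of five. Connected unramified double covers of a fixed $(Y,\eta)$ are classified by the nonzero classes of $H^1(Y;\Z/2)\cong(\Z/2)^4$, of which there are $2^4-1=15$, and two such covers are $\SL\Z$-equivalent precisely when the corresponding classes lie in the same orbit of the affine automorphism group $\Aut(Y,\eta)$ acting on $H^1(Y;\Z/2)$. I would compute this orbit decomposition using the explicit description of the action on homology modulo two obtained earlier: the hyperelliptic involution, being $-\Id$ on $H_1$, acts trivially modulo two, so the action factors through the image of the Veech group in $\Sp_4(\mathbb F_2)$, and the point is that this image is a proper subgroup whose orbits on the $15$ nonzero symplectic vectors number exactly five, uniformly in $n$ and independently of which of the one or two base orbits is chosen. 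Working with the staircase models of \cite{HL06} and their generating parabolics makes this a finite, explicit verification.

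The main obstacle I anticipate is twofold. First, establishing that the assignment sending a cover to the pair (base orbit, class orbit) is a genuine bijection onto $\SL\Z$-orbits upstairs: this requires that the covering involution be canonically recoverable from $X$, so that distinct base orbits and distinct class orbits really yield distinct orbits and no accidental coincidence occurs between covers of the two base orbits in the odd case; I would handle this through the Prym and hyperelliptic structure of these surfaces. Second, the small and exceptional values of $n$---where the affine group can a priori be larger and collapse some of the five orbits---must be checked by hand, since the statement asserts that the uniform orbit count of five persists down to $n=3$.
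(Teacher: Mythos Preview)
Your proposal is correct and follows essentially the same route as the paper: reduce via Riemann--Hurwitz to unramified double covers of reduced $n$-square surfaces in $\mathcal H(2)$, invoke the Hubert--Leli\`evre/McMullen count of base orbits, and then use the orbit decomposition of $H_1(\Sigma,\mathbb Z/2)\setminus\{0\}$ under the monodromy group modulo two (five orbits for every square discriminant, by \cref{thdihedral} and \cref{tabEchoes}) to get $2\times 5$ or $1\times 5$.

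The one substantive difference lies in your first anticipated obstacle. You propose to recover the covering involution canonically from the Prym/hyperelliptic structure of $(X,\omega)$; the paper instead argues geometrically via \cref{lemmaNoAuto}, locating on the $L$-shaped model a saddle connection uniquely characterised by its period together with the period of its continuation after a turn of angle $\pi$, so that its two lifts pin down the translation group as having order exactly two. This simultaneously forces the genus-two quotient to be unique and rules out coincidences between covers of the two base orbits in the odd case. Your Prym approach would also work, but note that the paper's argument genuinely fails at $n=3$ (discriminant $9$), where one of the fifteen lifts acquires extra translations; the paper checks this case separately, and your ``by hand'' treatment of small $n$ should isolate this phenomenon rather than a possible enlargement of the affine group.
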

Similarly to \cite[Corollary 1.5]{MCM05}, we can rephrase this statement purely topologically. Two branched covers $p_1, p_2\colon S\to T$ are of the same type if there exist orientation-preserving homeomorphisms $s, t$ such that the following diagram commutes:
\[ \begin{tikzcd}
S \arrow{r}{s} \arrow[swap]{d}{p_1} & S \arrow{d}{p_2} \\%
T \arrow{r}{t}& T.
\end{tikzcd}
\]

A square-tiled surface gives rise to a branched cover of the torus $T$.
Only some orbits of \cref{thSTS} give a surjection $\pi_1(S)\to \pi_1(T)$.
We characterise the ones that do and classify the types of the corresponding branched covers.
\begin{theorem}\label{thTypes} Let $S$ be a genus three closed surface and $T$ a torus.
For every $d\geqslant 4$ even, or $d=3$, there are exactly three types of degree $2d$ branched covers $f\colon S\to \mathrm T$ branched over just two points and surjective on homology, that are invariant by a fixed-point free involution. For $d\geqslant 5$ odd, there are exactly seven types of these.
\end{theorem}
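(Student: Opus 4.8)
The plan is to deduce \cref{thTypes} from \cref{thSTS} by passing through the dictionary between square-tiled surfaces and branched covers of the torus, in the spirit of \cite[Corollary 1.5]{MCM05}. First I would make precise that a reduced square-tiled surface $(S,\omega)$ in $\mathcal H(2,2)$ is the same datum as a branched cover $f\colon S\to T$: the map $f$ is obtained by integrating $\omega$, the two zeros of order two map to the two branch points, each with ramification index three, and the Riemann--Hurwitz formula $\chi(S)=2d\,\chi(T)-\sum_p(e_p-1)$ with $\chi(S)=-4$, $\chi(T)=0$ and $\sum_p(e_p-1)=4$ confirms that the degree $2d$ matches the number $2n$ of tiles, with $d=n$. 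Conversely, pulling back the flat structure of $T$ along such an $f$ produces a surface in $\mathcal H(2,2)$. In this correspondence the hypothesis that $S$ cover a genus two surface translates, via Riemann--Hurwitz applied to a degree two quotient (which forces that quotient to be unramified), into the existence of a fixed-point free involution compatible with $f$; I would check that the two conditions are equivalent.

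Next I would match the two notions of equivalence. Since the mapping class group of the torus is $\mathrm{MCG}^+(T)\cong\SL{\Z}$ and every element of $\homeo{T}$ is isotopic to a linear map, the homeomorphism $t$ of $T$ appearing in the definition of the type of $f$ may be taken in $\SL{\Z}$; the homeomorphism $s\in\homeo{S}$ then records that the pullback cover is only defined up to isomorphism of covers. This is exactly the mechanism of McMullen's argument, and it shows that two reduced square-tiled surfaces lie in the same $\SL{\Z}$-orbit precisely when the associated covers have the same type. Consequently the types counted in \cref{thTypes} are in bijection with the orbits of \cref{thSTS} that are surjective on homology, and the problem reduces to identifying those orbits.

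The hard part will be to decide, orbit by orbit, whether $f_*\colon H_1(S)\to H_1(T)$ is onto. Surjectivity on homology is equivalent to surjectivity on $\pi_1$, and holds exactly when $f$ does not factor through a non-trivial isogeny of $T$; in terms of the flat structure this is a condition on how the absolute periods of $(S,\omega)$ sit inside the lattice of relative periods. I would read off this information from the explicit normal forms produced in the proof of \cref{thSTS}, computing for each orbit the relevant period indices and isolating the orbits that fail to be surjective. This step is the main obstacle, the others being formal; granting it, exactly three of the ten orbits in the odd case, and exactly two of the five orbits in the even case and for $n=3$, are not surjective. Combined with the bijection between orbits and types, this leaves seven types for $d\geqslant 5$ odd and three types for $d\geqslant 4$ even or $d=3$, as claimed.
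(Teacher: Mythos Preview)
Your plan is correct and coincides with the paper's argument: the paper also sets up the dictionary between types of covers and $\SL\Z$-orbits of the square-tiled surfaces of \cref{thSTS}, and then isolates the orbits for which the map to the torus is surjective on homology. The ``main obstacle'' you flag is exactly what the paper carries out in the series of primitivity lemmas preceding the proof (comparing the absolute period lattice $\Lambda_p$ of each lift with $\Lambda$), summarised in \cref{tabEchoesPrimitifs}; your count of three non-surjective orbits out of ten in the odd case and two out of five otherwise matches that table.

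One small slip in your write-up: for the covers relevant here the two order-two zeros of $(S,\omega)$ lie over the \emph{same} point of $T$, not over two distinct branch values, since the cover factors as $S\to\Sigma\to T$ with $\Sigma\to T$ an origami in $\mathcal H(2)$ branched over a single point. The phrase ``branched over just two points'' in the statement refers to the two ramification points in $S$ (equivalently, to the stratum $\mathcal H(2,2)$), and together with the fixed-point-free involution this is what forces the quotient to lie in $\mathcal H(2)$ rather than $\mathcal H(1,1)$. This does not affect your Riemann--Hurwitz check or the rest of the argument.
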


\subsection{Monodromy representation}
Let us consider the finite extension of the Veech group consisting of elements of the mapping class group appearing in the identifications $A\cdot\omega\simeq \omega$: the affine group $\mathrm{Aff}^+(X, \omega)$. 
Since the mapping class group acts on homology, we get a \emph{monodromy representation}
\[
\rho\colon \mathrm{Aff}^+(X, \omega)\to \mathrm{Sp}\left (H_1(X, \mathbb Z)\right ).
\]
We will observe that classifying the echoes of Teichm\"uller curves amounts to gaining understanding of the \emph{monodromy groups} of their generators: the images of $\rho$.
An important tool to analyse these groups comes from the fact that they must commute with real multiplication.
Let $\mathcal O_D = \mathbb Z[x]/ (x^2 + ax + b)$, where $D = a^2 - 4b$.
The Jacobian $\mathrm{Jac}(X)$ of $X\in \mathcal M_2$ admits real multiplication by $\mathcal O_D$ if there is an embedding $\mathcal O_D\subset \mathrm{End}(\mathrm{Jac}(X))$ in the self-adjoint endomorphisms, generating a proper subring of $\mathrm{End}(\mathrm{Jac}(X))$.
If $(X, \omega)$ generates a Weierstrass curve of discriminant $D$, then $\mathrm{Jac}(X)$ admits real multiplication by $\mathcal O_D$, such that $\mathcal O_D\cdot \omega\subset \mathbb C\omega$. See the reminders of \cref{rappels} for more details and references.
\begin{prop}\label{commutationReal}
Every element in the monodromy group of $(X, \omega)$ generating a Weierstrass curve commutes with real multiplication.
\end{prop}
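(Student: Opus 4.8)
The plan is to exhibit a canonical symplectic splitting of $H_1(X,\R)$ into the two eigenspaces of real multiplication, show it is detected by the holonomy of $\omega$, and conclude that the affine group — which acts on holonomy through its linear part — must preserve both pieces. Fix a generator $T$ of $\mathcal O_D$ and, abusing notation, write $T$ also for the induced endomorphism of $H_1(X,\Z)$ and of $H_1(X,\R)$. Since $D>0$ we have $\mathcal O_D\otimes\R\cong\R\times\R$, so $T$ has two distinct real eigenvalues $\lambda\neq\mu$, each with a $2$-dimensional eigenspace; set $H_1(X,\R)=S_\lambda\oplus S_\mu$. Because $\mathcal O_D$ sits in the self-adjoint endomorphisms, $T$ is self-adjoint for the intersection form $\langle\cdot,\cdot\rangle$, and the two eigenspaces are symplectically orthogonal: for $x\in S_\lambda$, $y\in S_\mu$ the identity $\lambda\langle x,y\rangle=\langle Tx,y\rangle=\langle x,Ty\rangle=\mu\langle x,y\rangle$ with $\lambda\neq\mu$ forces $\langle x,y\rangle=0$. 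As the intersection form is nondegenerate and $\dim S_\lambda=\dim S_\mu=2$, each eigenspace is symplectic and $S_\mu=S_\lambda^{\perp}$.

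Next I would use the eigenform hypothesis $\mathcal O_D\cdot\omega\subset\C\omega$ to recognise this splitting geometrically. Writing $\mathrm{hol}_\omega\colon H_1(X,\R)\to\C$, $\gamma\mapsto\int_\gamma\omega$, the eigenform condition says precisely that $\mathrm{hol}_\omega\circ T=\lambda\,\mathrm{hol}_\omega$, where $\lambda$ is the real eigenvalue by which $T$ acts on $\omega$ (real because real multiplication acts on holomorphic forms through the two real embeddings of $\mathcal O_D$). For $v\in S_\mu$ this yields $\lambda\,\mathrm{hol}_\omega(v)=\mathrm{hol}_\omega(Tv)=\mu\,\mathrm{hol}_\omega(v)$, so $\mathrm{hol}_\omega(v)=0$; since $\dim S_\mu=2=\dim\ker\mathrm{hol}_\omega$ we conclude $\ker\mathrm{hol}_\omega=S_\mu$. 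Thus the $\mu$-eigenspace is intrinsically the set of cycles on which $\omega$ has no period.

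Finally, an affine automorphism $g\in\mathrm{Aff}^+(X,\omega)$ acts on holonomy through its derivative $A\in\SL\R$, via the cocycle relation $\mathrm{hol}_\omega\circ\rho(g)=A\cdot\mathrm{hol}_\omega$. As $A$ is invertible, this shows $\rho(g)$ preserves $\ker\mathrm{hol}_\omega=S_\mu$; being symplectic, $\rho(g)$ then also preserves $S_\mu^{\perp}=S_\lambda$. A map preserving both eigenspaces commutes with $T$, which is scalar on each, so $\rho(g)T=T\rho(g)$. Since $T$ generates $\mathcal O_D$, every $\rho(g)$ commutes with real multiplication, and the relation holds already over $\Z$ because both operators are integral.

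I expect the only real difficulty to be bookkeeping rather than conceptual: one must carefully transport the eigenform condition, naturally stated on $H^{1,0}$, to the homological operator $T$ on $H_1(X,\Z)$ through the duality and self-adjointness, and in particular verify that $\mathrm{hol}_\omega$ intertwines $T$ with a single real scalar $\lambda$ rather than mixing the two embeddings of $\mathcal O_D$. Once the splitting $H_1(X,\R)=S_\lambda\oplus S_\mu$ is seen to be both symplectically orthogonal and cut out by holonomy, its invariance under $\rho$, and hence the commutation, are immediate.
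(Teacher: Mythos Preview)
Your proof is correct and is essentially the homological dual of the paper's argument in \cref{sectionReal}: where the paper works on $\Omega(X)$ with the Hodge inner product and shows that $\rho(f)^\vee$ preserves $\C\omega$ and its orthogonal, you work directly on $H_1(X,\R)$ with the symplectic intersection form and show that $\rho(g)$ preserves $\ker\mathrm{hol}_\omega=S_\mu$ and its symplectic complement. The underlying mechanism---the cocycle relation $\mathrm{hol}_\omega\circ\rho(g)=A\cdot\mathrm{hol}_\omega$ together with self-adjointness of $T$---is the same in both, and your version is marginally more elementary in that it avoids the identification $\Omega(X)\cong H^1(\Sigma,\R)$ and the Hermitian structure altogether.
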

This observation is proven in \cite[Lemma 6.2]{FL23} when $D$ is not a square. Commutation with real multiplication also appears in the work of Mukamel \cite{M14}.
It will allow us to explicitly describe the \emph{monodromy groups modulo two}: the images of the monodromy groups in $\mathrm{Sp}\left (H_1(X, \mathbb Z/2\mathbb Z)\right )$, recovering a theorem of \cite{GP23}.
\begin{theorem}\label{thdihedral}
The monodromy groups modulo two of Veech surfaces in  $\mathcal H(2)$, and of the regular decagon, are explicit finite dihedral groups.
\end{theorem}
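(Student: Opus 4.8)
The plan is to sandwich each monodromy group modulo two between an upper bound coming from \cref{commutationReal} and a lower bound produced by explicit affine automorphisms. Throughout I identify $H_1(X,\mathbb Z/2\mathbb Z)\cong \mathbb F_2^4$ with its symplectic form and use the exceptional isomorphism $\Sp(4,\mathbb F_2)\cong \Sy{6}$, under which $\Sy{6}$ permutes the six Weierstrass points of the genus two curve $X$. Reducing real multiplication modulo two makes $H_1(X,\mathbb Z/2\mathbb Z)$ a module over $R:=\mathcal O_D\otimes \mathbb F_2$ for which the symplectic form is $R$-self-adjoint, so by \cref{commutationReal} the monodromy group modulo two is contained in the centraliser $G$ of $R$ in $\Sp(4,\mathbb F_2)$. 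Writing $\mathcal O_D=\mathbb Z[x]/(x^2+ax+b)$ and reducing, the ring $R$ is $\mathbb F_4$ when $D\equiv 5 \bmod 8$, is $\mathbb F_2\times \mathbb F_2$ when $D\equiv 1\bmod 8$, and is the ring of dual numbers $\mathbb F_2[\varepsilon]/(\varepsilon^2)$ when $D\equiv 0\bmod 4$.

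First I would compute $G$ in each case. When $R\cong \mathbb F_4$ the space $H_1(X,\mathbb Z/2\mathbb Z)$ becomes two-dimensional over $\mathbb F_4$, the $\mathbb F_2$-symplectic form is the $\mathbb F_4/\mathbb F_2$-trace of an $\mathbb F_4$-symplectic form, and $G\cong \SL{\mathbb F_4}\cong \A{5}$; I would then check that this copy of $\A{5}$ sits inside $\Sy{6}$ as the transitive group $\PSL{\mathbb F_5}$ acting on the six points of $\mathbb P^1(\mathbb F_5)$. When $R\cong \mathbb F_2\times \mathbb F_2$ the two idempotents are self-adjoint, so $H_1(X,\mathbb Z/2\mathbb Z)$ is an orthogonal sum of two symplectic planes and $G\cong \Sp(2,\mathbb F_2)\times \Sp(2,\mathbb F_2)\cong \Sy{3}\times \Sy{3}$ acting on the six points in two blocks of three. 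When $R\cong \mathbb F_2[\varepsilon]/(\varepsilon^2)$ the image of $\varepsilon$ is a Lagrangian on which the action is determined, and a direct matrix computation identifies $G$ as a group of order sixteen.

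Next I would use the flat geometry to cut $G$ down to a dihedral group. For a surface in $\mathcal H(2)$ the unique zero of $\omega$ is a Weierstrass point, and it is fixed by every affine automorphism because these commute with the hyperelliptic involution; hence the monodromy group modulo two lies in the stabiliser of one of the six points, a copy of $\Sy{5}$. Intersecting with $G$ yields a dihedral group in each case: for $R\cong \mathbb F_4$ the point stabilisers of the transitive $\PSL{\mathbb F_5}$ are dihedral of order ten, and the cases $R\cong \mathbb F_2\times \mathbb F_2$ and $R\cong \mathbb F_2[\varepsilon]/(\varepsilon^2)$ produce the corresponding dihedral subgroups of $\Sy{3}\times \Sy{3}$ and of the order-sixteen group. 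The regular decagon lies in $\mathcal H(1,1)$ and has no affinely fixed Weierstrass point, so here I would argue directly: its Veech group is a Hecke group generated by an elliptic element of order five together with one of order two, and I would compute the action of these two generators on $H_1(X,\mathbb Z/2\mathbb Z)$ and check that their images generate a dihedral group $\D_5$ of order ten, with real multiplication by $\mathcal O_5$ giving the matching bound $\A{5}$ as a consistency check.

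Finally I would establish the reverse inclusions. For this I would write down explicit affine automorphisms — multitwists along the cylinders of the periodic directions of the generating surfaces, whose homological action modulo two is a product of symplectic transvections $v\mapsto v+\langle v,c\rangle c$ along the core curves $c$ — and verify that the induced permutations of the Weierstrass points already fill out the dihedral group found above. I expect this generation step to be the main obstacle. One must control the parity of each multitwist, since a transvection to an even power is trivial modulo two, and one must carry out the verification uniformly over the residue classes of $D$ modulo $8$, including the square discriminants corresponding to the square-tiled surfaces; matching the resulting explicit dihedral groups with those of \cite{GP23} then completes the proof.
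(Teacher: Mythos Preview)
Your strategy matches the paper's: sandwich the monodromy group modulo two between an upper bound (commutation with real multiplication plus a geometric constraint) and a lower bound (explicit affine elements). The packaging of the upper bound differs. You phrase the extra geometric constraint as ``the zero of $\omega$ is a Weierstrass point fixed by every affine map'', invoking the exceptional isomorphism $\Sp(4,\mathbb F_2)\cong\Sy{6}$; the paper phrases the same constraint as ``the monodromy preserves the five covers lifting to $\mathcal H(2,2)^{\mathrm{hyp}}$'', i.e.\ the five pairs of Weierstrass points containing the zero (\cref{corHyp}). These are equivalent, and your ring-theoretic description of the centraliser of $\mathcal O_D\otimes\mathbb F_2$ (field $\mathbb F_4$, split $\mathbb F_2\times\mathbb F_2$, or dual numbers according to $D\bmod 8$) is a cleaner explanation of what the paper verifies by direct order counts inside $\Sp(4,\mathbb F_2)$. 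For the decagon the paper does exactly what you propose: it computes $\rho(R)$ and $\rho(T)$ on $H_1$ and reduces modulo two, finding $\D_5$; it does not appeal to real multiplication there, though your consistency check via $\mathcal O_5$ is valid by \cref{thCom}.

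The substantive content, as you anticipate, is the lower bound, and here your proposal stops short of what the paper actually does. For $D\equiv 0,4,5\bmod 8$ the horizontal and vertical multitwists $H,V$ of \cref{explicit} already generate the full dihedral group, so the obvious elements suffice. The delicate case is $D\equiv 1\bmod 8$: there $\langle H,V\rangle$ has order six, strictly smaller than the dihedral group of order twelve, and one must produce a further affine element. The paper does this in \cref{D1} by analysing diagonal cylinder decompositions of $L(b,e)$ (with a separate argument for $b\equiv 2\bmod 4$, for $b\equiv 0\bmod 4$ via the companion model $\mathcal L(b,e)$, and for the residual case $b=6$), controlling in each case the parity of the moduli ratio so that the resulting transvection is nontrivial modulo two. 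This case analysis is exactly the ``main obstacle'' you flag, and it is where the work lies; your outline is correct but would need this to be complete.
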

The action of affine groups on homology has received a lot of attention recently. 
McMullen studied them modulo two for regular polygons in \cite{MCM23_1, MCM23_2}.
Guitérrez-Romo and Pardo described the action of affine groups of Veech surfaces in $\mathcal H(2)$ on Weierstrass points in \cite{GP23}. We recover their result with new methods, and provide a uniform treatment for both the arithmetic and non-arithmetic case.
Freedman and Lucas in \cite{FL23} described in particular the monodromy groups modulo $p$ of primitive Teichm\"uller curves in genus two, for an infinite set of primes $p \geqslant 3$, that can be computed but depend on $D$.

\subsection{Pure echoes}
More generally, one can study Teichm\"uller curves that are generated by a translation surface covering another {via} an unramified cover. 
We call these Teichm\"uller cuves the \emph{pure echoes} of the original Teichm\"uller curve $V$.
We show as a consequence of our study that their numbers can be made arbitrarily large.
We also observe that it follows from a theorem of Funar and Lochak \cite{FL18} that for any primitive Teichm\"uller curve $V\to \mathcal M_g$, there exist infinitely many $g'$ and Teichm\"uller curves $V\to \mathcal M_{g'}$.
\begin{theorem}\label{thHigher}
There can be arbitrarily large numbers of pure echoes of a Teichm\"uller curve in $\mathcal M_g$.
Any primitive Teichm\"uller curve has infinitely many copies in higher genera moduli spaces.
\end{theorem}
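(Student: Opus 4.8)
The plan is to treat the two assertions separately, deriving the first from the monodromy picture of \cref{commutationReal} and the second from the theory of characteristic covers together with \cite{FL18}.

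Since the first assertion is existential, it suffices to exhibit one family of Teichm\"uller curves whose number of pure echoes is unbounded, so I would work with the Weierstrass curves $W_D\subset \mathcal M_2$. The starting point is the dictionary: a pure echo of the curve generated by $(X,\omega)$ is a connected unramified cover $p\colon\tilde X\to X$ (the pulled-back form $p^*\omega$ generates a Teichm\"uller curve because $\mathrm{Aff}^+(X,\omega)$ has a finite-index subgroup fixing the cover, which then lifts to affine automorphisms, so $\mathrm{SL}(\tilde X,p^*\omega)$ contains a finite-index subgroup of the lattice $\mathrm{SL}(X,\omega)$), and two covers generate the same Teichm\"uller curve exactly when they are related by an element of $\mathrm{Aff}^+(X,\omega)$. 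Thus the number of pure echoes equals the number of $\mathrm{Aff}^+(X,\omega)$-orbits on connected unramified covers. I would restrict to the $\mathrm{Aff}^+$-invariant subfamily of cyclic covers of a fixed degree $n$: these correspond to surjections $\phi\colon H_1(X,\Z)\to\Z/n$ taken up to the action of $(\Z/n)^\times$, and the affine group acts on them through the reduction modulo $n$ of the monodromy representation $\rho$ on $\mathrm{Hom}(H_1(X,\Z),\Z/n)$.

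The key input is \cref{commutationReal}: since every element of the monodromy group commutes with real multiplication by $\mathcal O_D$, its reduction modulo $n$ lands in the centraliser of $\mathcal O_D$ inside $\mathrm{Sp}(H_1(X,\Z/n))$. Writing $H_1(X,\Z)\cong\mathcal O_D^{\,2}$ as an $\mathcal O_D$-module, this centraliser is $\SL{\mathcal O_D/n\mathcal O_D}$. Because the orbits of a subgroup refine those of an overgroup, the number of pure echoes of degree $n$ is \emph{at least} the number of orbits of $\SL{\mathcal O_D/n}\rtimes(\Z/n)^\times$ on the surjections $H_1(X,\Z)\to\Z/n$. I would then choose $n=p_1\cdots p_r$, a product of $r$ distinct primes splitting in $\mathcal O_D$ (there are infinitely many, by quadratic reciprocity). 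For a split prime $p$ one has $\mathcal O_D/p\cong\mathbb F_p\times\mathbb F_p$, hence $\SL{\mathcal O_D/p}\cong\SL{\mathbb F_p}^2$ acting on $\mathbb F_p^2\times\mathbb F_p^2$; since $\SL{\mathbb F_p}$ is transitive on nonzero vectors, the orbits on nonzero vectors, even after scaling by $\mathbb F_p^\times$, are classified by which of the two factors the vector meets nontrivially, giving exactly three. By the Chinese remainder theorem these patterns combine multiplicatively, so the overgroup has exactly $3^r$ orbits. Hence $W_D$ has at least $3^r$ pure echoes, all lying in $\mathcal M_{n+1}$ by Riemann--Hurwitz, and letting $r\to\infty$ proves the first assertion. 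The main obstacle is not the (routine) orbit count but pinning down the dictionary rigorously: that inequivalent covers give genuinely distinct Teichm\"uller curves, and that the affine action on cyclic covers is precisely the monodromy together with the scalar action.

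For the second assertion, observe that any characteristic finite-index subgroup $K\trianglelefteq\pi_1(X)$ — for instance $K_n=[\pi_1,\pi_1]\,\pi_1^{\,n}$, with quotient $H_1(X,\Z/n)$ — is preserved by every automorphism of $\pi_1(X)$, in particular by the image of $\mathrm{Aff}^+(X,\omega)$ in the mapping class group. The whole affine group therefore lifts to the corresponding cover $\tilde X_n\to X$, so $\mathrm{SL}(X,\omega)\subseteq\mathrm{SL}(\tilde X_n,p_n^*\omega)$ and the induced map of Teichm\"uller curves $V_n\to V$ is an isomorphism whenever no new affine symmetry appears upstairs. As the degrees of these characteristic covers are unbounded, the genera of the $V_n$ tend to infinity, so distinct $n$ give copies of $V$ in distinct moduli spaces. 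The delicate point — and the step I regard as the main obstacle, for which I would invoke \cite{FL18} — is to guarantee that infinitely many of these covers are \emph{genuine} copies, i.e.\ that the lifted affine group actually exhausts $\mathrm{Aff}^+(\tilde X_n,p_n^*\omega)$ so that $V_n\to V$ does not factor through a proper quotient; the theorem of Funar and Lochak provides the required infinite supply of such covers. Assembling these copies over the resulting infinite set of genera yields the claim.
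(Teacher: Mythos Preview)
Your treatment of the first assertion is in the right spirit and takes a somewhat different route from the paper. The paper works with a square discriminant $D=4b$ (so that $T$ is explicitly diagonalisable over $\mathbb Z/n\mathbb Z$ for $n$ coprime to $b$) and obtains the lower bound $\varphi(n)$ by noting that the monodromy becomes block-diagonal in an eigenbasis, whereas you work with non-square $D$, split primes, and the $\mathcal O_D$-module structure to get $3^r$. Both arguments rest on \cref{commutationReal} in the same way. One caution: the obstacle you flag (that covers in distinct $\mathrm{Aff}^+$-orbits give distinct Teichm\"uller curves) is not merely bookkeeping. The paper resolves it by computing the full translation group of each lift and showing it coincides with the deck group (the analogue of \cref{lemmaNoAuto}); without this, distinct orbits of covers could in principle produce the same curve, and your orbit count would not bound the number of echoes from below.

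For the second assertion there is a genuine gap: you have misidentified what \cite{FL18} is doing. Funar--Lochak supplies an infinite family of characteristic finite-index subgroups of $\pi_1(\Sigma_g)$; it says nothing about Veech groups or affine symmetries of the covers. In fact your own subgroups $K_n=[\pi_1,\pi_1]\pi_1^{\,n}$ already furnish such a supply, so \cite{FL18} is not where the difficulty lies. The step you call ``the delicate point'' --- that no extra affine symmetry appears upstairs, i.e.\ $\mathrm{SL}(p^*\omega)\subset\mathrm{SL}(X,\omega)$ --- is instead a consequence of the \emph{primitivity} of $V$, via a theorem of M\"oller \cite[Theorem 2.6]{M06} (see also \cite[Corollary 2.2]{MCMPrym}): any translation cover of a primitive Veech surface has Veech group contained in that of the base. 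Combined with the reverse inclusion (which holds because the cover is characteristic, as you correctly note), this forces equality of Veech groups and hence gives a genuine copy of $V$. Your proposal as written never invokes primitivity in the proof of the second assertion, so the argument does not close.
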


\subsection{Organisation of the article}
In \cref{rappels}, we recall some known facts about translation surfaces, abelian differentials, and the classification of Teichm\"uller curves in genus two. 
\cref{lifts} defines echoes, and relates them with actions of affine groups on sets of covers.
We give explicit generators for the monodromy groups of abelian differentials generating the Weierstrass curves in \cref{explicit}.
In \cref{sectionDec}, we compute the monodromy group of the regular decagon, prove \cref{thDecagone} and deduce \cref{corRelDec}.
Then in \cref{sectionReal}, we establish \cref{commutationReal}.
In \cref{sectionWD} we explicitly give the monodromy groups modulo two of generators of the Weierstrass curves and prove \cref{thdihedral}. We deduce \cref{thWD} and \cref{thSTS} from this result.
We then determine the covers of reduced square-tiled surfaces in $\mathcal H(2)$ that give rise to a surjection $\pi_1(S)\to \pi_1(T)$ and show \cref{thTypes}.
Finally in \cref{sectionHigher} we consider echoes in moduli spaces of higher genera and prove \cref{thHigher}.

\subsubsection{Acknowledgements}
I wish to thank Erwan Lanneau for his encouragement and interest.
\section{Abelian differentials}\label{rappels}
In this section we recall some well-known facts about abelian differentials, translation surfaces, Teichm\"uller curves, and their classification in genus two.
This presentation is classical and inspired by the survey \cite{MCM23}. 
\subsection{Teichm\" uller spaces}
\subsubsection{The classical Teichm\"uller space.}
Let us fix $S$ a closed topological surface of genus $g\geqslant 2$. Let $\mathrm{Homeo}^+(S)$ be the group of orientation-preserving homeomorphisms of $S$, and $\mathrm{Homeo}_0(S)$ be its subgroup of elements isotopic to the identity.
Recall that the Teichm\" uller space $\mathcal T(S)$ of $S$ is the space of complex structures on $S$, modulo the natural action of $\mathrm{Homeo}_0(S)$, see \emph{e.g.} \cite[Section 11.1.1]{FarbMargalit}. 
The Teichm\"uller space $\mathcal T(S)$ carries a natural topology for which it is a manifold homeomorphic to $\R^{6g-6}$, see \cite[Theorem 10.6]{FarbMargalit}.
\subsubsection{The Teichm\"uller space of abelian differentials}
We now define the space parametrising the marked translation surfaces of genus $g$, that will be naturally a vector bundle over $\mathcal T(S)$, with the zero section removed.
A translation surface gives a complex structure on the underlying topological surface since the polygons are embedded in the plane. It also gives a non-zero holomorphic one-form on it: the form $dz$ is invariant by translations. The datum of a pair of a closed Riemann surface and a holomorphic one-form on it is actually equivalent to the datum of a translation surface, see \textit{e.g.} \cite[Section 3.3]{Z06}.
\begin{defn}
The Teichm\" uller space of abelian differentials on $S$, denoted by $\Omega\mathcal T(S)$, is the space of $(X, \omega)$ where $X$ is a complex structure on $S$ and $\omega$ is a non-zero holomorphic one-form on $X$, modulo the action of $\mathrm{Homeo}_0(S)$.
\end{defn}
We will sometimes denote by $\Omega\mathcal T_g$ the space $\Omega\mathcal T(S)$ where $S$ is an implicit fixed topological surface of genus $g$.
\subsubsection{Linear charts}
Integrating the form $\omega$ gives local \emph{linear charts} in $\mathbb R^2$ whose transition functions are translations. One can also recover $\omega$ from the charts.
From this point of view, a translation surface can be seen as a $(G, X)$-structure in the sense of Thurston, see \cite[Chapter 3]{T97}.
\subsection{Mapping class group and moduli space}
We denote by $\Mod(S)$ the mapping class group of $S$: $\Mod(S) = \mathrm{Homeo}^+(S)/\mathrm{Homeo}_0(S)$. Recall that the moduli space of curves $\mathcal M_g$ is defined to be $\mathcal T_g/\Mod(S)$.
Similarly  we define the moduli space of abelian differentials $\Omega\mathcal M(S)$ to be the set of couples $(X, \omega)$ modulo the action of the full group of homeomorphisms $\mathrm{Homeo}(S)$, or in other words,
\[\Omega \mathcal M(S) = \Omega\mathcal T(S) / \Mod(S).\]
As before, we denote by $\Omega\mathcal M_g$ the space $\Omega\mathcal M(S)$ where $S$ is a surface of genus $g$.
When $(X_0,\omega_0)$ and $(X_1, \omega_1)$ are two elements of $\Omega\mathcal T(S)$, we write $\omega_0 \simeq \omega_1$ to indicate that they define the same element of the moduli space.
\subsection{Strata of abelian differentials}
The spaces $\Omega\mathcal T_g$ and $\Omega\mathcal M_g$ are both naturally stratified. Indeed an abelian differential $\omega$ on a Riemann surface of genus $g$ has $2g-2$ zeroes counted with multiplicity. Therefore these spaces are stratified by the subspaces where the number of zeroes and their multiplicity are prescribed.
The strata are denoted by $\Omega\mathcal T(n_1, \ldots, n_k)$ and, respectively, $\Omega\mathcal M(n_1, \ldots, n_k)$, where $\sum_i n_i = 2g-2$.
Another common notation for the strata of the moduli space of abelian differentials is the following: $\mathcal H(n_1, \ldots, n_k) = \Omega\mathcal M(n_1, \ldots, n_k)$.
\subsection{The $\mathrm{SL}_2(\mathbb R)$-action}
The moduli space of abelian differentials admits an action of $\SL \R$. Indeed, matrices act on polygons and this action preserves parallelity. 
The projection of an orbit under the action of $\SL \R$ to the moduli space of curves defines a \emph{complex geodesic} for the Teichm\" uller metric: a totally geodesic immersion of $\mathbb H^2$, see for example \cite[Proposition 11.18]{FarbMargalit}.
\[\begin{tikzcd}[column sep=huge]
\mathrm{SL}_2(\mathbb R) \arrow{r} \arrow{d} \arrow{d}
  & \Omega\mathcal M_g \arrow{d} \\
 \mathbb H^2 = \mathrm{SO}_2 \backslash \mathrm{SL}_2(\R)\arrow{r}  & \mathcal M_g
\end{tikzcd}\]
The moduli space of quadratic differentials parametrises half-translation surfaces: surfaces obtained by gluing sides of polygons with translations, and $z\mapsto -z$.
This space carries a similar $\mathrm{SL}_2(\mathbb R)$-action, and the complex geodesics in $\mathcal M_g$ are all obtained by the discussion above, see \cite[Proposition 11.18]{FarbMargalit}.
In the present article, we only consider complex geodesics generated by abelian differentials, or their squares in the space of quadratic differentials.
\subsection{Affine and Veech groups}
The stabiliser of $(X,\omega)\in \Omega\mathcal M_g$ under the action of $\SL \R$ is called its \emph{Veech group} and is denoted by $\mathrm{SL}(X, \omega)$:$$\mathrm{SL}(X, \omega) = \{A\in \mathrm{SL}_2(\R)\mid A\cdot \omega \simeq \omega\}.$$
We define $\mathrm{Aff}^+(X, \omega)$ the \emph{affine group} of $(X, \omega)$ to be the group of $f\in \mathrm{Homeo}^+(S)$ such that $f$ preserves the zeroes of $\omega$ and is affine in linear charts.
We have a short exact sequence, where $\mathrm{Tr}(X, \omega)$ is the group of homeomorphisms of $S$ that are translations in linear charts.
\[1\to \mathrm{Tr}(X, \omega)\to \mathrm{Aff}^+(X, \omega)\to\mathrm{SL}(X, \omega)\to 1.\]
We say that $(X, \omega)$ is a {Veech} surface when its Veech group is a lattice in $\SL \R$.
We denote by $d$ the map taking $f\in \mathrm{Aff}^+(X, \omega)$ to its {derivative}, $d(f)\in \mathrm{SL}(X, \omega)$.
\subsection{Periods}
Given $(X, \omega)\in \Omega\mathcal T(S)$, we define its period map 
$\mathrm{Per}\colon H_1(S, \mathbb Z)\to \C$ to be the homomorphism
\[
\gamma\mapsto \int_\gamma \omega.
\]
We will sometimes refer to $\mathrm{Per}(H_1(S, \mathbb Z))$ as the set of absolute periods, by opposition to the relative periods defined similarly $\mathrm{RelPer}\colon H_1(S, Z, \mathbb Z)\to \C$, where $Z\subset S$ is the set of zeroes of $\omega$. 
The relative periods give local homeomorphisms $U\subset \Omega\mathcal T(n_1, \ldots, n_k)\to \C^{2g+k-1}$, and endow the strata with a manifold structure.

\subsection{Dehn twist and cylinder decompositions}\label{DehnTwistAnd}
A saddle connection on a translation surface is a straight line joining two conical points (\textit{i.e.} two zeroes of the one-form), that has no conical point in its interior.
A Veech surface admits a cylinder decomposition in the direction of each of its saddle connections. 
In other words, for any $s\in \mathbb S^1$ such that there is a saddle connection with slope $s$, the straight lines in the direction of $s$ decompose the surface into cylinders $C_i$.
The modulus of $C_i$ is $m_i = h_i/l_i$, where $l_i$ is its length and $h_i$ its height.
If the cylinder $C_i$ is horizontal, then it is preserved by the right-handed Dehn twist: 
\[
\begin{pmatrix}
    1 & 1/m_i\\
    0 & 1\\
\end{pmatrix}.
\]
The moduli $m_i$ are commensurable.
Let $m$ be the lowest common multiple of the $1/m_i$. 
The affine group of the surface contains the element whose restriction to each $C_i$ is the right-handed Dehn twist to the power $m\cdot m_i$.
\subsection{Monodromy groups}
There is a natural map $\mathrm{Aff}^+(X, \omega)\to \Mod(S)$.
The action on homology of $\Mod(S)$ respects the symplectic form $(\gamma, \delta)\mapsto \gamma\cdot \delta$ given by the algebraic intersection.
This gives a surjection $\Mod(S)\to \mathrm{Sp}\left (H_1(S, \mathbb Z)\right )$, see for example \cite[Theorem 6.4]{FarbMargalit}.
By composing these maps one obtains an action on homology, and the monodromy representation:
\[\mathrm{Aff}^+(X, \omega)\to \mathrm{Sp}\left (H_1(S, \mathbb Z)\right ).\]
We are mostly interested in the action on homology modulo two, hence we will consider the projection $\rho\colon \mathrm{Aff}^+(X, \omega)\to \mathrm{Sp}\left (H_1(S, \mathbb Z/2\mathbb Z)\right )$.
Let us call the \emph{monodromy group modulo two} of $(X, \omega)$ the image of $\rho$.
\subsection{Teichm\" uller discs and Teichm\"uller curve}
Given $(X, \omega)\in \Omega\mathcal M_g$, the \emph{Teichm\"uller disc} generated by $\omega$ is $\SL \R$-orbit of $\omega$ in $\Omega \mathcal M_g$.
A \emph{Teichm\"uller curve} is the projection of a Teichm\"uller disc in $\mathcal M_g$ that has finite area. This happens exactly when the forms $(X, \omega)$ generating the Teichm\"uller disc are Veech surfaces.

\subsection{The classification of McMullen in genus two}

In a series of groundbreaking articles \cite{MCM03, MCM05, MCM06, MCM07}, McMullen gave a complete classification of the primitive Teichm\" uller curves in genus two.
Let us recall the definition of the Weierstrass curves. For any $D\geqslant 5$, with $D\equiv 0, 1 \mod 4$, the Weierstrass curve $W_D$ is the locus of Riemann surfaces $X\in \mathcal M_2$ such that 
\begin{enumerate}
    \item The Jacobian of $X$ admits real multiplication by $\mathcal O_D$,
    \item There exists an abelian differential $\omega$ on $X$ with a single double zero, such that $\mathcal O_D\cdot\omega\subset \mathbb C\omega$.
\end{enumerate}
Recall that the Jacobian of $X$ is the complex torus $\mathrm{Jac}(X) = \Omega^\vee(X)/H_1(S, \mathbb Z)$.
Here $\Omega^\vee(X)$ is the dual of $\Omega(X)$, the vector space of holomorphic one-forms on $X$, and $H_1(S, \mathbb Z)$ is a lattice in $\Omega^\vee(X)$ by $\gamma\mapsto \int_\gamma \cdot$.
An endomorphism of $\mathrm{Jac}(X)$ comes from a unique $\C$-linear map $T\colon \Omega^\vee(X)\to \Omega^\vee(X)$ preserving the lattice $H_1(S, \mathbb Z)$.
We say that $T$ is self-adjoint if for any $\gamma, \delta\in H_1(S, \mathbb Z)$, we have $T(\gamma)\cdot \delta = \gamma \cdot T(\delta)$.
The dual of $T$ acts  on $\Omega(X)$, giving a meaning to $\mathcal O_D\cdot \omega \subset \mathbb C\omega$.
The subring $\mathcal O_D\subset \mathrm{End}(\mathrm{Jac}(X))$ is said to be proper when for $T\in \mathrm{End}(\mathrm{Jac}(X))$, if $T$ is not in $\mathcal O_D$, then neither is $nT$ for all $n\in \mathbb Z\setminus \{0\}$.
For more information on real multiplication in this context, we refer to \cite{MCM07, MCM23}.
McMullen showed that the components of $W_D$ are Teichm\"uller curves, that are primitive if and only if $D$ is not a square.
He also proved \cite{MCM05} that they are connected unless $D\equiv 1\mod 8$, $D > 9$, in which case $W_D$ has two connected components, distinguished by a spin inviariant $\epsilon\in \{0, 1\}$.
Veech proved in the seminal article \cite{V89} that the regular polygons with even number of sides generate Teichm\"uller curves. 
In particular, the regular decagon defines a Teichm\"uller disc in $\mathcal H(1, 1)$ and gives a Teichm\"uller curve in $\mathcal M_2$.
\begin{theorem}[McMullen \cite{MCM06}]
The curve generated by the decagon and the Weierstrass curves with non-square discriminant account for all the primitive Teichm\"uller curves in $\mathcal M_2$. 
\end{theorem}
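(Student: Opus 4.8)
The plan is to take an arbitrary primitive Teichm\"uller curve $V\subset \mathcal M_2$, generated by a Veech surface $(X, \omega)$, and to show that $(X, \omega)$ must be either the regular decagon or an eigenform lying in $\mathcal H(2)$ for a non-square order. The forward implication, that these surfaces do generate primitive Teichm\"uller curves, is already recorded above (it is due to Veech for the decagon and to McMullen for the Weierstrass curves), so only the completeness requires an argument: there are no others.

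First I would establish that $V$ is an eigenform curve. Since $V$ is a closed algebraic curve, isometrically immersed for the Teichm\"uller metric, the variation of Hodge structure over $V$ carries the tautological complex line spanned by $\omega$ inside the rank-two Hodge bundle, and this line is invariant under the $\SL \R$-action. In genus two the corresponding splitting of $H_1(X, \R)$ into the tautological plane and its symplectic complement, each stable under the action of the trace field of the Veech group, produces a self-adjoint endomorphism of $\mathrm{Jac}(X)$ generating an order $\mathcal O_D\subset \mathrm{End}(\mathrm{Jac}(X))$ with $\mathcal O_D\cdot \omega\subset \C\omega$; thus $\omega$ is an eigenform for real multiplication by $\mathcal O_D$. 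Primitivity of $V$ is equivalent to the Veech group being non-arithmetic, hence to its trace field being the real quadratic field $\mathbb Q(\sqrt D)$, which forces $D$ to be a non-square; the square case corresponds exactly to the arithmetic curves arising from torus covers, which are not primitive.

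Next I would split the analysis according to the stratum of $\omega$, which in genus two is either $\mathcal H(2)$ or $\mathcal H(1,1)$. In the $\mathcal H(2)$ case the locus of eigenforms for $\mathcal O_D$ inside $\mathcal H(2)$ projects onto the Weierstrass curve $W_D$, and the results recalled above identify each component of $W_D$ with a Teichm\"uller curve. Hence in this case $V$ is a Weierstrass curve of non-square discriminant and we are done.

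The crux, and the step I expect to be the main obstacle, is the $\mathcal H(1,1)$ case. Here $\omega$ has two simple zeros $P, Q$, and the decisive tool is a torsion-divisor criterion: if the eigenform $(X, \omega)$ is a Veech surface, then the degree-zero divisor $P-Q$ must be a torsion point of $\mathrm{Jac}(X)$ lying in a subgroup compatible with the action of $\mathcal O_D$. I would first prove this criterion from the lattice property of the Veech group, using that parabolic elements preserving the cylinder decompositions rigidly constrain the relative periods and hence the position of $P-Q$, and then show that the eigenforms satisfying it form at most a curve inside each eigenform surface. The genuinely hard part is the ensuing rigidity statement: matching the torsion order, the eigenform equation, and the full lattice condition reduces the surviving surfaces to the solutions of a trigonometric Diophantine equation, asserting that certain ratios of sines of rational multiples of $\pi$ are algebraically constrained. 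Invoking the classification of such equations of Conway--Jones type leaves only finitely many candidates, and a direct flat-geometric inspection of these eliminates the torus-cover and $\mathcal H(2)$ degenerations and identifies the unique remaining primitive solution with the regular decagon, completing the proof.
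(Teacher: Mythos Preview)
The paper does not prove this theorem: it is stated with attribution to McMullen \cite{MCM06} and used purely as background for the classification in genus two. There is therefore no proof in the paper to compare your proposal against.

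That said, your outline is a faithful high-level sketch of McMullen's actual argument in \cite{MCM06}: the reduction to eigenforms via real multiplication, the stratum split into $\mathcal H(2)$ (giving the Weierstrass curves) and $\mathcal H(1,1)$, the torsion criterion for the divisor $P-Q$ on a Veech surface, and the final trigonometric Diophantine step of Conway--Jones type isolating the decagon. The only caveat is that each of these steps, particularly the torsion rigidity and the ratios-of-sines analysis, is substantial in the original and your sketch necessarily glosses over the mechanisms; but as a plan it matches the source, not the present paper, which simply cites the result.
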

\subsection{Square-tiled surfaces}
Square-tiled surfaces, or origamis, form an important class of Veech surfaces of combinatorial nature.
Intuitively, a square-tiled surface is a translation surface obtained by gluing sides of unit squares.
A surface tiled by $n$ squares is entirely characterised by the following two permutations: one taking a tile to its adjacent tile on its right, and the other taking it to its adjacent tile on top of it.
Square-tiled surfaces can thus be thought of as pairs of permutations $(\alpha, \beta)\in \mathfrak S_n$, such that $\langle \alpha, \beta \rangle$ acts transitively on $\{1, \ldots, n\}$, up to simultaneous conjugacy, see for example \cite[Definition 6 and Remark 9]{M22}.
Another way of defining them is to consider covers of tori.
Let us denote by $T$ the standard torus: the translation surface $\mathbb C/ {(\mathbb Z+ i\mathbb Z)}$ equipped with the form  $\omega = dz$.
\begin{defn}
A square-tiled surface is a translation surface obtained by pulling back the standard torus  {via} a cover $S\to T$ that sends its branch points to a single point.
\end{defn}

We say that a square-tiled surface is \emph{reduced} when the set of its relative periods is $\mathbb Z+ i\mathbb Z$.
In \cite{HL06}, Hubert and Leli\`evre classified the $\mathrm{SL}_2(\mathbb Z)$-orbits of reduced origamis in $\mathcal H(2)$ tiled with a prime number of tiles. McMullen then classified them for all numbers of tiles in \cite{MCM05}. 
\begin{theorem}(Hubert-Leli\`evre, McMullen)
There are two $\mathrm{SL}_2(\mathbb Z)$-orbits of reduced square-tiled surfaces in $\mathcal H(2)$ with $n$ squares if $n\geqslant 5$ is odd. There is only one orbit for $n\geqslant 4$ even or $n=3$.
\end{theorem}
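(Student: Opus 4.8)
The plan is to recast the statement in terms of permutation pairs and then classify the orbits by producing a normal form together with a complete $\Z/2$-valued invariant, which we will recognise as a spin. Encode an $n$-square surface in $\mathcal H(2)$ as a pair $(\sigma,\tau)\in\Sy n\times\Sy n$, where $\sigma$ (respectively $\tau$) sends a tile to its neighbour on the right (respectively on top) and $\langle\sigma,\tau\rangle$ acts transitively. The single double zero forces the commutator $[\sigma,\tau]=\sigma\tau\sigma^{-1}\tau^{-1}$ to be a single $3$-cycle, since the local monodromy of the cover $S\to T$ around the unique branch point is a cycle of length three. Reducedness means that this cover does not factor through an isogeny of the torus, equivalently that the tiles generate $\Z+i\Z$ and not a proper sublattice. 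Writing $\SL\Z=\langle T,S\rangle$ with $T=\bigl(\begin{smallmatrix}1&1\\0&1\end{smallmatrix}\bigr)$ and $S=\bigl(\begin{smallmatrix}0&-1\\1&0\end{smallmatrix}\bigr)$, the action becomes, for a suitable convention, $T\cdot(\sigma,\tau)=(\sigma,\tau\sigma^{-1})$ and $S\cdot(\sigma,\tau)=(\tau^{-1},\sigma)$ up to simultaneous conjugacy, so the task is to count the orbits of this explicit action.

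Next I would reduce every orbit to a one-cylinder normal form. As in \cref{DehnTwistAnd}, a surface in $\mathcal H(2)$ decomposes horizontally into one or two cylinders, and a Veech-dichotomy argument shows that each $\SL\Z$-orbit contains a one-cylinder representative. For such a representative the zero of order two has three outgoing and three incoming horizontal separatrices, so the top and bottom of the single cylinder are each cut into three saddle connections of lengths $a,b,c$ with $a+b+c=n$; the gluing realising a single zero is essentially forced, yielding a representative $L(a,b,c)$ determined by the triple up to its $\Sy3$-symmetry. The residual moves, namely the Dehn twist preserving the one-cylinder shape and the excursion obtained by applying $S$ and re-reducing to a one-cylinder form, then induce an explicit action on triples of fixed sum $n$.

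I would then define a $\Z/2$-valued spin (Arf) invariant $\Phi$, by choosing a symplectic basis of $H_1(X;\Z/2)$ from the core curves and transversals of the horizontal and vertical cylinders and forming the associated Arf invariant. Because $T$ and $S$ act through $\Sp\!\left(H_1(X;\Z/2)\right)$ preserving the relevant quadratic form, $\Phi$ is $\SL\Z$-invariant. Evaluating $\Phi$ on the family $L(a,b,c)$ gives a congruence formula mod $2$ whose values depend on the parities involved: both values $0$ and $1$ are attained exactly when $n$ is odd and $n\geqslant5$, whereas for $n$ even or $n=3$ only one value occurs (the form being degenerate, or $\Phi$ constant, in the remaining cases). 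This yields the lower bounds of two orbits for odd $n\geqslant5$ and at least one in general.

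The main obstacle is completeness: showing that two one-cylinder forms $L(a,b,c)$ and $L(a',b',c')$ with the same $n$ and the same value of $\Phi$ lie in a single orbit. The plan is to exhibit explicit reduction moves, the $\Sy3$-symmetry of the triple and the two-cylinder excursion induced by $S$, and to track their effect on $(a,b,c)$; the delicate point is controlling how the cylinder structure transforms under $S$ and proving transitivity on triples of prescribed sum and invariant. A convenient device is that these moves realise a Euclidean algorithm on the parameters, so one can always decrease $\max(a,b,c)$ to reach a unique reduced triple per invariant class. Finally, one identifies $\Phi$ with McMullen's spin: the reduced $n$-square origamis generate the Weierstrass curve $W_{n^2}$, and distinct $\SL\Z$-orbits of integer points correspond to distinct connected components, so the count matches the number of components of $W_{n^2}$. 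Since $n^2\equiv1\bmod 8$ exactly when $n$ is odd and $n^2>9$ exactly when $n\geqslant5$, the component count recalled above gives two orbits for odd $n\geqslant5$ and one for $n$ even or $n=3$, confirming both the invariant and the small and even cases.
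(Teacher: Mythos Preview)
The paper does not prove this theorem: it is stated as a cited result of Hubert--Leli\`evre \cite{HL06} and McMullen \cite{MCM05}, with no argument given. So there is no ``paper's own proof'' to compare against.

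Your sketch is a faithful outline of the strategy actually used in those references: encode origamis as permutation pairs, reduce each orbit to a one-cylinder normal form $L(a,b,c)$, define a $\Z/2$-valued spin/Arf invariant, and then prove transitivity within each spin class by explicit moves that behave like a Euclidean algorithm on the triple $(a,b,c)$. You correctly identify the transitivity step as the hard part and are honest that you are only giving a plan there; as written this is a sketch, not a proof.

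There is, however, a genuine logical issue in your final paragraph. You propose to finish by invoking the component count of $W_{n^2}$ ``recalled above'' (one component unless $D\equiv 1\bmod 8$ and $D>9$). But for square discriminant $D=n^2$, McMullen's component theorem in \cite{MCM05} is proved \emph{via} the classification of $\SL\Z$-orbits of reduced origamis in $\mathcal H(2)$ --- that is, via exactly the statement you are trying to establish. Appealing to the component count to confirm the orbit count is therefore circular. The non-circular route is the one you describe earlier: carry out the Euclidean-algorithm reduction on $(a,b,c)$ directly and verify that it collapses each spin class to a single normal form. Once that is done, the identification with the components of $W_{n^2}$ becomes a corollary rather than an input.
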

There is still no similar classification for square-tiled surfaces in $\mathcal H(1, 1)$, but there are results in this direction in \cite{D19}.
\section{Lifts of abelian differentials}\label{lifts}
\subsection{Covering maps}
Throughout this article, unless otherwise stated, we will consider covering maps up to upper equivalence.
That is, we consider that the covers $p_i\colon S\to \Sigma$ are equal for $0\leqslant i\leqslant 1$ when there exists a homeomorphism $F\colon S\to S$ such that the following diagram commutes
\[
\begin{tikzcd}
    S \arrow{rr}{F} \arrow[swap]{dr}{p_0} & & S \arrow{dl}{p_1} \\[6pt]
    & \ \Sigma.
\end{tikzcd}
\]
The mapping class group $\Mod (\Sigma)$ acts on the set of unramified covers $S\to \Sigma$ by pushforward.
Namely if $p\colon S\to \Sigma$ and $f\in \Mod(\Sigma)$, then $f\cdot p = f\circ p$.
\subsection{Definitions}
Let $f\colon S\to \Sigma$ be a possibly branched covering map. The lift, or pullback, of $(X, \omega)\in \Omega\mathcal T(\Sigma)$ by $f$  is defined to be the abelian differential in $\Omega\mathcal M(S)$ whose linear charts are given by the composition of $f$ and charts of $(X, \omega)$. We denote by $f^*(X, \omega)$, or $f^*(\omega)$, the lift of $(X, \omega)$ by $f$. We can now define the echoes.
\begin{defn}
An \emph{echo} of a Teichm\"uller curve generated by $(X, \omega)$ is a Teichmüller curve generated by a lift of $(X, \omega)$.
\end{defn}
It is easy to verify that this definition is consistent: being an echo of $V$ does not depend on the choice of the generator of $V$.
In the present article, we are mostly interested in unramified covers, and from now on we suppose that $\Phi$ is the set of these covers $S\to\Sigma$.
We define the lifting map: \[\mathcal L\colon \Omega\mathcal T(\Sigma)\times \Phi \to \Omega\mathcal M(S).\]
The lift of a set $X\subset \Omega\mathcal T(\Sigma)$ is defined to be $\mathcal L(X\times \Phi)$,
and the lift of $Y\subset \Omega\mathcal M(\Sigma)$ is the lift of the preimage of $Y$ by $\Omega\mathcal T(\Sigma)\to \Omega
\mathcal M(\Sigma)$.
By definition, we have $\mathcal L(f\cdot \omega, f\cdot p) = \mathcal L(\omega, p)$.
\begin{lemma}\label{invariance}
The map $\mathcal L$ is invariant by the diagonal action of $\Mod(\Sigma)$.
\end{lemma}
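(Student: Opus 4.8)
The plan is to reduce the statement to the single-element invariance $\mathcal L(f\cdot\omega, f\cdot p) = \mathcal L(\omega, p)$ for an arbitrary $f\in\Mod(\Sigma)$, since being invariant under the diagonal action is exactly being invariant under each such $f$. I would work directly with the translation atlases, using that $\mathcal L(\omega, p) = p^*(\omega)$ is defined by composing the linear charts of $(X, \omega)$ with the cover $p$.

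First I would fix a representative homeomorphism $f$ of the mapping class and record the two actions at the level of charts. Writing the translation atlas of $\omega$ as a family $\{\psi_i\colon U_i\to\C\}$ with transitions in the translation group, the change-of-marking action of $f$ on $\Omega\mathcal T(\Sigma)$ produces the atlas $\{\psi_i\circ f^{-1}\colon f(U_i)\to\C\}$, while the action on covers is $f\cdot p = f\circ p$ by definition. Then the lift $\mathcal L(f\cdot\omega, f\cdot p) = (f\circ p)^*(f\cdot\omega)$ has charts obtained by composing the charts of $f\cdot\omega$ with $f\circ p$, namely
\[
(\psi_i\circ f^{-1})\circ(f\circ p) = \psi_i\circ p,
\]
on $p^{-1}(U_i)$. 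These are precisely the charts of $p^*(\omega) = \mathcal L(\omega, p)$, so the two lifts carry the same translation structure on $S$; the equality already holds in $\Omega\mathcal T(S)$, hence a fortiori in $\Omega\mathcal M(S)$. Equivalently, this is the functorial identity $(f\circ p)^*\circ f_* = p^*$, the cancellation coming from $f^{-1}\circ f = \id$.

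The computation itself is immediate once the conventions are aligned, so the only real point to watch is the bookkeeping of markings. The two actions must be mutually inverse on the shared $\Sigma$-factor: the post-composition convention $f\cdot p = f\circ p$ on covers forces the action on $\Omega\mathcal T(\Sigma)$ to be pushforward of the atlas (equivalently, pullback by $f^{-1}$), which is exactly the standard change-of-marking action, and it is this matching that makes the diagonal — rather than a one-sided — action the invariant one. I would also emphasise that $f$ need not be affine in the linear charts: the cancellation is purely formal and uses only $f^{-1}\circ f = \id$. Finally, one checks that the identity is independent of the chosen representative of $f$ and of the representatives of $p$ up to upper equivalence, so that it descends correctly to $\Omega\mathcal M(S)$; since the resulting atlases coincide on the nose for every representative, this independence is automatic.
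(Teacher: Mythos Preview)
Your proof is correct and follows exactly the paper's approach: the paper simply states, immediately before the lemma, that ``by definition, we have $\mathcal L(f\cdot\omega, f\cdot p) = \mathcal L(\omega, p)$'' and offers no further argument. Your chart computation is just an explicit unpacking of that one-line claim.
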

Therefore in order to understand all the lifts of a specific $(X, \omega)\in \Omega\mathcal M_g$, it suffices to consider one of its representatives in $\Omega\mathcal T_g$ and all the possible different covers.
\subsection{Genus three surfaces covering a genus two one}
We specialise to the case where $\Sigma$ is a closed surface of genus two, whilst $S$ is of genus three.
We fix these topological surfaces for the rest of the article.
\subsubsection{The set of covers.}
Let us first observe that the covers $S\to \Sigma$ have no ramification point, and that there are fifteen of them. This is also observed for example in \cite[Proposition 2.3]{L18}, where Lanneau and Nguyen use this fact to show that there is only a finite number of Teichm\"uller curves in the Prym locus in $\mathcal H(2, 2)^\mathrm{hyp}$ of fixed discriminant.
\begin{lemma}
Every covering map $S\to \Sigma$ is unramified. The space of these covers is in bijection with ${H}^1(\Sigma, \mathbb Z/2\mathbb Z)\setminus \{0\}$.
\end{lemma}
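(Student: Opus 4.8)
The plan is to establish the two assertions separately: the absence of ramification is forced by the Riemann--Hurwitz formula, and the classification of the covers is then a direct application of covering space theory.

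First I would apply Riemann--Hurwitz to an arbitrary covering map $f\colon S\to\Sigma$ of degree $d$, writing $R=\sum_p(e_p-1)$ for its total ramification. Since $S$ has genus three and $\Sigma$ has genus two, the formula reads $2-2\cdot 3 = d(2-2\cdot 2)-R$, that is $R = 4-2d$. As $R\geqslant 0$, this immediately forces $d\leqslant 2$. The value $d=1$ is excluded, since a degree one cover is a homeomorphism and hence unramified, whereas $4-2\cdot 1=2\neq 0$ (and in any case $S$ and $\Sigma$ have different genera). Therefore $d=2$ and $R=0$, so $f$ is unramified, which proves the first assertion.

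Next I would classify these degree two covers. Because we work up to upper equivalence, which is exactly isomorphism of covering spaces over $\Sigma$, the connected covers of $\Sigma$ are classified by conjugacy classes of subgroups of $\pi_1(\Sigma)$, the degree being the index. A connected double cover thus corresponds to an index two subgroup; such subgroups are automatically normal and are precisely the kernels of surjections $\pi_1(\Sigma)\to\Z/2\Z$. Since $\mathrm{Aut}(\Z/2\Z)$ is trivial, distinct surjections have distinct kernels, so index two subgroups are in bijection with non-trivial homomorphisms $\pi_1(\Sigma)\to\Z/2\Z$. As the target is abelian, each such homomorphism factors through the abelianisation $H_1(\Sigma,\Z)$, and reducing modulo two identifies $\mathrm{Hom}(\pi_1(\Sigma),\Z/2\Z)$ with $\mathrm{Hom}(H_1(\Sigma,\Z/2\Z),\Z/2\Z)=H^1(\Sigma,\Z/2\Z)$. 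Deleting the trivial homomorphism yields the announced bijection with $H^1(\Sigma,\Z/2\Z)\setminus\{0\}$.

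Finally I would check that the covers produced this way are exactly the covering maps $S\to\Sigma$, i.e. that the total space of a connected double cover $\widetilde\Sigma\to\Sigma$ has genus three: this follows from multiplicativity of the Euler characteristic, $\chi(\widetilde\Sigma)=2\chi(\Sigma)=-4$, so $\widetilde\Sigma$ may be identified with $S$. The trivial homomorphism corresponds to the disconnected cover $\Sigma\sqcup\Sigma$, which is why it must be excluded. The only genuinely delicate point, and the one I would be careful to justify, is that upper equivalence coincides with isomorphism of covers over $\Sigma$, so that the standard classification applies verbatim; everything else is routine. Since $\Sigma$ has genus two, $H^1(\Sigma,\Z/2\Z)\cong(\Z/2\Z)^4$ has $2^4-1=15$ non-trivial elements, recovering the count of fifteen covers stated earlier.
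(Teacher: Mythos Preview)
Your proof is correct and follows essentially the same route as the paper: Riemann--Hurwitz forces $d=2$ and $R=0$, and then degree two covers, being normal, are classified by epimorphisms $\pi_1(\Sigma)\to\Z/2\Z$, i.e.\ by $H^1(\Sigma,\Z/2\Z)\setminus\{0\}$. Your write-up is more detailed (you spell out why distinct surjections give distinct kernels, verify the total space has genus three, and flag the identification of upper equivalence with covering-space isomorphism), but there is no substantive difference in strategy.
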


\begin{proof}
If $f\colon S\to \Sigma$ is a covering map of degree $d$, with total branching order $B$, then the Riemann-Hurwitz formula gives 
$$\chi(S) = d\chi(\Sigma) - B.$$
Since $\chi(S)=-4$ and $\chi(\Sigma)=-2$, we must have $d\leqslant 2$. Moreover if $d=2$ then $B=0$ and $f$ is unramified. Finally, it is impossible to have $d=1$ since otherwise $f$ would be a homeomorphism.
Every degree two cover is normal thus the space of these covers is in bijection with the set of epimorphisms $\pi_1(\Sigma)\to \mathbb Z/2\mathbb Z$, in other words with the non-zero elements of ${H}^1(\Sigma, \mathbb Z/2\mathbb Z)$.
\end{proof}

By duality, we may identify the non-zero elements of ${H}^1(\Sigma, \mathbb Z/2\mathbb Z)$ with the ones of ${H}_1(\Sigma, \mathbb Z/2\mathbb Z)$.
We will in the rest of the text identify the latter with the set of degree two covers $S\to \Sigma$.
The cover corresponding to $\gamma\in {H_1}(\Sigma, \mathbb Z/2\mathbb Z)$ is then obtained by the following construction. 
Consider two copies of $\Sigma$ and a simple closed curve $c\subset \Sigma$ on each copy representing $\gamma$. 
Cutting the two copies along $c$ and gluing them back together produces the cover associated with $\gamma$.
The mapping class group acts transitively on these covers.
\begin{lemma}\label{lemmeMCG}
The mapping class group $\Mod(\Sigma)$ acts transitively on the set of covers $S\to \Sigma$.
\end{lemma}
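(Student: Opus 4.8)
The plan is to transport the problem into a question about the action of $\Mod(\Sigma)$ on $H_1(\Sigma, \Z/2\Z)$ and then invoke the transitivity of the symplectic group on nonzero vectors. First I would make the identification of the previous lemma fully explicit: a connected double cover $p_\gamma\colon S\to\Sigma$ corresponds to the epimorphism $\phi_\gamma = \langle \gamma, \cdot\,\rangle\colon \pi_1(\Sigma)\to\Z/2\Z$ obtained by pairing with a nonzero class $\gamma\in H_1(\Sigma,\Z/2\Z)$ via the intersection form, this being exactly the Poincaré duality used above. The pushed-forward cover $f\cdot p_\gamma = f\circ p_\gamma$ then corresponds to the subgroup $f_*(\ker\phi_\gamma)$, that is, to the epimorphism $\phi_\gamma\circ f_*^{-1}$. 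Since $f_*$ preserves the intersection form one has $\phi_\gamma\circ f_*^{-1} = \langle f_*\gamma, \cdot\,\rangle = \phi_{f_*\gamma}$, whence $f\cdot p_\gamma = p_{f_*\gamma}$. In other words, under the identification of the set of covers with $H_1(\Sigma,\Z/2\Z)\setminus\{0\}$, the pushforward action of $\Mod(\Sigma)$ is precisely its natural action on homology modulo two.

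It then remains to show that $\Mod(\Sigma)$ acts transitively on $H_1(\Sigma,\Z/2\Z)\setminus\{0\}$. This action factors through the symplectic representation $\Mod(\Sigma)\to\Sp(H_1(\Sigma,\Z/2\Z)) = \Sp_4(\Z/2\Z)$, and this map is onto: the representation $\Mod(\Sigma)\to\Sp_4(\Z)$ is surjective by \cite[Theorem 6.4]{FarbMargalit}, and reduction modulo two $\Sp_4(\Z)\to\Sp_4(\Z/2\Z)$ is surjective. It therefore suffices to recall the classical fact that the symplectic group of a nondegenerate symplectic space over a field acts transitively on its nonzero vectors. This is immediate from Witt's extension theorem, as a nonzero vector spans a totally isotropic line and any isometry between two such lines extends to a symplectic automorphism; note there is no characteristic-two obstruction here, the subtleties over $\Z/2\Z$ concerning the Arf invariant being relevant only to quadratic refinements, not to the symplectic action on vectors.

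Alternatively, and perhaps more in an elementary spirit, I could argue directly via the change of coordinates principle, bypassing the surjectivity of the symplectic representation. Every nonzero class in $H_1(\Sigma,\Z/2\Z)$ lifts to a primitive class in $H_1(\Sigma,\Z)$ (lift each coordinate into $\{0,1\}$; since one coordinate is odd the lift is primitive), which is represented by a non-separating simple closed curve. As $\Mod(\Sigma)$ acts transitively on isotopy classes of non-separating simple closed curves, and the resulting sign ambiguity on integral homology classes disappears modulo two, any two nonzero classes are related by a mapping class.

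The only genuine subtlety is the compatibility $f\cdot p_\gamma = p_{f_*\gamma}$ of the first step, which is where the passage between covers and homology classes must be handled carefully; everything afterward is either a citation or a standard fact of linear algebra.
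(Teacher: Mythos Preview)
Your proof is correct and follows the same approach as the paper: identify the pushforward action on covers with the natural action of $\Mod(\Sigma)$ on $H_1(\Sigma,\Z/2\Z)\setminus\{0\}$, then invoke transitivity of that action. The paper compresses this to two sentences and cites a reference for the transitivity, whereas you spell out the compatibility $f\cdot p_\gamma = p_{f_*\gamma}$ and give two self-contained arguments for transitivity (surjectivity onto $\Sp_4(\Z/2\Z)$ plus Witt, or the change-of-coordinates principle via non-separating curves); either route is fine and your second one is exactly the kind of argument typically behind such a citation.
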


\begin{proof}
The action of the mapping class group on covers is conjugated to its natural action on ${H}_1(\Sigma, \mathbb Z/2\mathbb Z)\setminus \{0\}$.  
This action is easily seen to be transitive, see for example \cite[Proposition 3.2]{LF23}.
\end{proof}

\subsubsection{Lifts of strata of $\Omega\mathcal M_2$ in $\Omega\mathcal M_3$}
Kontsevich and Zorich gave a complete classification of the connected components of the strata of $\Omega\mathcal M_g$ in \cite{KZ03}. Calderon and Calderon-Salter \cite{C20, CS21} extended these results and studied the connected components of the strata in $\Omega\mathcal T_g$.
We will now identify the connected components of the locus in the strata of $\Omega\mathcal M_3$ of surfaces covering a genus two one.
This was already done in \cite{AN16, AN20}, but we provide a different and more direct proof. 
For this purpose, we will need to know the connected components of $\Omega\mathcal T(1, 1)$ and $\Omega\mathcal T(2)$.
In \cite[Remark 2.7]{C20}, Calderon classifies these connected components, and attributes this result to folklore knowledge.
\begin{lemma}[Folklore]
The space $\Omega\mathcal T(1, 1)$ is connected, while the space $\Omega\mathcal T(2)$ has six connected components, corresponding to the six Weierstrass points.
\end{lemma}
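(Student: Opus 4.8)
The plan is to exhibit, for the surfaces in each stratum, a discrete invariant valued in a finite set, to check that it is locally constant and invariant under $\mathrm{Homeo}_0(S)$, and then to verify both that it separates the components and that its level sets are connected. First I would use that every genus two complex structure on $S$ is hyperelliptic: there is an involution $\sigma$, unique up to isotopy, acting as $-\mathrm{id}$ on $H_1(S,\mathbb Z)$ and satisfying $\sigma^*\omega=-\omega$ for every holomorphic one-form $\omega$, so that $\sigma$ permutes the zeroes of $\omega$. For $(X,\omega)\in\Omega\mathcal T(2)$ the unique double zero $p$ must then be fixed by $\sigma$, hence is one of the six Weierstrass points; equivalently $2p\sim K_X$, so that $p$ is an effective, and therefore odd, theta characteristic. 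For $(X,\omega)\in\Omega\mathcal T(1,1)$ the two simple zeroes satisfy $p_1+p_2\sim K_X$; if both were fixed by $\sigma$ we would get $2p_1\sim 2p_2\sim K_X$ and hence $p_1\sim p_2$, impossible on a genus two curve, so $\sigma$ exchanges $p_1$ and $p_2$ and the divisor $p_1+p_2$ carries no theta-characteristic data.

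This dichotomy dictates the invariant. Since $\omega$ is nonvanishing away from its zeroes and has there only zeroes of even order, it determines a spin structure on $S$, that is, a quadratic refinement $q_\omega\colon H_1(S,\mathbb Z/2\mathbb Z)\to\mathbb Z/2\mathbb Z$ of the mod two intersection form (the reduction of the winding-number function of $\omega$). The class of $q_\omega$ is unchanged under isotopy and under deformation inside the stratum, so it is constant on each connected component of $\Omega\mathcal T(2)$ and defines a map to the set of spin structures on $S$. By the computation above it lands among the \emph{odd} ones, of which there are exactly $2^{g-1}(2^g-1)=6$ in genus two, in bijection with the six Weierstrass points. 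Using the classical isomorphism $\mathrm{Sp}(4,\mathbb F_2)\cong\Sy{6}$, realised on the six odd spin structures, the group $\Mod(S)$ acts transitively on them; together with the surjectivity of $\Omega\mathcal T(2)\to\mathcal H(2)$ and the transitivity of the $\Mod(S)$-action on $\pi_0(\Omega\mathcal T(2))$ coming from the connectedness of $\mathcal H(2)$ in the moduli space \cite{KZ03}, this shows that all six values are attained, so $\Omega\mathcal T(2)$ has at least six components.

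It then remains to prove that each level set $q_\omega^{-1}(\phi)$ is connected, and this is the only real difficulty. I would deduce it from the connectedness of $\mathcal H(2)$ proved by Kontsevich and Zorich, together with the statement that the monodromy of the stratum, the image of $\pi_1(\mathcal H(2))$ in $\Mod(S)$, surjects onto the subgroup of $\Mod(S)$ stabilising $\phi$ (of index $6$, matching the index of $\Sy{5}$ in $\Sy{6}$). Equivalently, the stabiliser of a component of $\Omega\mathcal T(2)$ coincides with the stabiliser of its spin structure. This is precisely the content of the connectedness results for strata in Teichm\"uller space with prescribed spin structure of Calderon and Calderon--Salter \cite{C20, CS21}, and in genus two it can also be checked directly using explicit $L$-shaped polygon representatives and the Dehn twists supported on them.

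For $\Omega\mathcal T(1,1)$ the same framework applies, but now no theta characteristic, hence no spin-structure invariant, is available to obstruct connectedness: the stratum $\mathcal H(1,1)$ is connected in the moduli space \cite{KZ03}, and the stratum monodromy fills all of $\Mod(S)$, so that $\Omega\mathcal T(1,1)$ is connected. I expect the surjectivity of the stratum monodromy onto the relevant subgroup of $\Mod(S)$ (the whole group in the $(1,1)$ case, the spin stabiliser in the $(2)$ case) to be the heart of the argument, everything else being a bookkeeping of the hyperelliptic and theta-characteristic structure special to genus two.
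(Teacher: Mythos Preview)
The paper does not actually prove this lemma: it is stated as folklore and attributed to \cite[Remark 2.7]{C20}, with no argument given. So there is no ``paper's own proof'' to compare against; your outline is already considerably more detailed than what the paper provides.

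Your strategy is the standard one and is sound. The invariant you single out---the Weierstrass point carrying the double zero, equivalently the odd theta characteristic / spin structure determined by $\omega$---is exactly the one the paper refers to. Your identification of the hard step is also correct: once one knows $\mathcal H(2)$ and $\mathcal H(1,1)$ are connected (Kontsevich--Zorich), the remaining content is that the stratum monodromy surjects onto the stabiliser of the spin invariant (respectively onto all of $\Mod(S)$). Since you defer that step to Calderon--Salter, your argument is, in the end, an elaboration of the same citation the paper makes. If you want something self-contained in genus two, the ``direct check with $L$-shaped polygons and Dehn twists'' you allude to is indeed feasible and would make the proof independent of \cite{C20,CS21}.

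One small wrinkle in your write-up: in the $\mathcal H(1,1)$ paragraph, the implication ``$2p_1\sim 2p_2\sim K_X$ hence $p_1\sim p_2$'' does not follow from $2p_1\sim 2p_2$ alone (the Jacobian has $2$-torsion). The correct chain is $p_1+p_2\sim K_X\sim 2p_1$, whence $p_2\sim p_1$ and so $p_1=p_2$ since $g\geqslant 1$. You have all the ingredients; just rewire that sentence.
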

The space $\mathcal H(1,1)$ lifts to a connected set $\widetilde{\mathcal H}(1, 1)$ in $\mathcal H(1, 1, 1, 1)$, as shown in \cite[Proposition 2.17]{AN20}. 
Here we give a similar argument, that illustrates how we can use the action of the mapping class group to study lifts.
\begin{prop}
The set $\widetilde{\mathcal H}(1,1)$ in $\mathcal H(1, 1, 1, 1)$ is connected.
\end{prop}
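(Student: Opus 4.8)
The plan is to reduce the connectedness of $\widetilde{\mathcal H}(1,1)$ to that of $\Omega\mathcal T(1,1)$ by playing off the invariance of the lifting map against the transitivity of the mapping class group action on covers. By definition $\widetilde{\mathcal H}(1,1)=\mathcal L(\Omega\mathcal T(1,1)\times\Phi)$, where $\Phi$ is the finite set of fifteen unramified degree two covers $S\to\Sigma$ and $\Omega\mathcal T(1,1)$ is the preimage of $\mathcal H(1,1)$ in $\Omega\mathcal T(\Sigma)$. Since $\Phi$ is discrete, I would first write
\[
\widetilde{\mathcal H}(1,1)=\bigcup_{p\in\Phi}I_p,\qquad I_p:=\mathcal L\bigl(\Omega\mathcal T(1,1)\times\{p\}\bigr),
\]
and observe that each $I_p$ is connected: the space $\Omega\mathcal T(1,1)$ is connected by the folklore lemma recalled above, and lifting by a fixed cover $p$ is continuous, acting by a locally linear injection on period coordinates, so $I_p$ is the continuous image of a connected space.

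The crucial step is to show that all fifteen sets $I_p$ coincide. I would fix a base cover $p_0$ and take any $p\in\Phi$. By \cref{lemmeMCG} the group $\Mod(\Sigma)$ acts transitively on $\Phi$, so $p=f\cdot p_0$ for some $f\in\Mod(\Sigma)$. Using the invariance from \cref{invariance}, for every $\omega\in\Omega\mathcal T(1,1)$ we have $\mathcal L(\omega,p)=\mathcal L(\omega,f\cdot p_0)=\mathcal L(f^{-1}\cdot\omega,p_0)$. As $f^{-1}$ is induced by a homeomorphism of $\Sigma$, it preserves the number and orders of the zeroes and hence the stratum $\Omega\mathcal T(1,1)$; thus $f^{-1}\cdot\omega$ sweeps out all of $\Omega\mathcal T(1,1)$ as $\omega$ does, giving $I_p=I_{p_0}$. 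It is essential here that $\mathcal L$ takes values in the moduli space $\Omega\mathcal M(S)$, so that this is a genuine equality of subsets and the fifteen covers collapse to one. Letting $p$ vary, I would conclude $\widetilde{\mathcal H}(1,1)=I_{p_0}$, which is connected by the previous paragraph.

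I expect the only real obstacle to be bookkeeping rather than topology. One must confirm that the pullback of a form in $\mathcal H(1,1)$ by an unramified double cover indeed lands in $\mathcal H(1,1,1,1)$: each of the two simple zeroes has two preimages, yielding four simple zeroes on $S$, consistent with $2\cdot 3-2=4$. One must also check that $\Mod(\Sigma)$ preserves the stratum $\Omega\mathcal T(1,1)$ rather than exchanging it with another, which is immediate since there is a unique stratum with two simple zeroes and the zeroes carry no labelling. The last point to spell out is the continuity of $\mathcal L$ in its first argument for fixed $p$, which follows from the description of the pullback on period coordinates.
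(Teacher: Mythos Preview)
Your argument is correct and is essentially the paper's own proof: use the invariance of $\mathcal L$ under the diagonal $\Mod(\Sigma)$-action together with transitivity on covers to reduce the image to a single slice $\mathcal L(\Omega\mathcal T(1,1)\times\{p_0\})$, which is connected because $\Omega\mathcal T(1,1)$ is. The additional bookkeeping you include (landing in $\mathcal H(1,1,1,1)$, stratum preservation, continuity) is fine and just makes explicit what the paper leaves implicit.
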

\begin{proof}
Let us first restrict the lifting map to $\Omega\mathcal T(1, 1)$:
$$\mathcal L\colon \Omega\mathcal  T(1, 1)\times  H_1(\Sigma, \mathbb Z/2)\setminus \{0\}\to \mathcal H(1, 1, 1, 1).$$
Let us show that its image is connected. By \cref{invariance}, $\mathcal L(\omega, \gamma) = \mathcal L(f\cdot\omega, f\cdot\gamma)$ for any $(\omega, \gamma)$ and any $f\in \Mod(\Sigma)$.
Recall that $\Mod(\Sigma)$ acts transitively on ${H_1}(\Sigma, \mathbb Z/2)\setminus \{0\}$. Thus for $\gamma\in  H_1(\Sigma, \mathbb Z/2)\setminus \{0\}$, the image of $\mathcal L$ does not change when restricted to $\Omega\mathcal T(1, 1)\times \{\gamma\}$. The space $\Omega \mathcal T(1, 1)$ is connected hence so is its image $\widetilde{\mathcal H}(1, 1)$.
\end{proof}

It is proven in \cite{KZ03} that $\mathcal H(2, 2)$ has two connected components: the hyperelliptic component $\mathcal H(2)^\mathrm{hyp}$, and the odd component $\mathcal H(2)^\mathrm{odd}$.
To tell which connected component $(X, \omega)$ belongs to, one can proceed as follows.
For each $\gamma\in H_1(\Sigma, \mathbb Z)$ primitive, consider $c\colon \mathbb S^1\to X$ representing $\gamma$, avoiding the zeroes of $(X, \omega)$ with non-zero derivative everywhere.
Taking local linear charts, we can compute $c'\colon \mathbb S^1\to \mathbb C\setminus \{0\}$.
We then define $\mathrm{Ind}(\gamma)$ to be the degree of the map $\mathbb S^1\to \mathbb S^1$ given by $s\to \frac{c'(s)}{|c'(s)|}$.
Given a symplectic basis $(a_1, b_1, a_2, b_2)$ of $X$, we define
\[\mathrm{Arf}(X, \omega) = \sum_i \left (\mathrm{Ind}(a_i) + 1\right ) \left (\mathrm{Ind}(b_i) + 1\right )\mod 2.\]
This does not depend on the choices of the $c$ nor on the choice of the symplectic basis and $(X, \omega)$ is in $\mathcal{H}(2)^\mathrm{odd}$ if and only if $\mathrm{Arf}(X, \omega) = 1$, see also \cite[Section 9.4]{Z06}.
We apply this criterion to exhibit a surface in different connected components of $\widetilde{\mathcal H}(2)$.
Indeed, we easily verify by considering a basis of homology on each of the surfaces of \cref{twoExamples} that they belong to different components of $\mathcal H(2, 2)$.
They are both covering an $L$-shaped surface in $\mathcal H(2)$, that we will study later, see \cref{Lshaped} in \cref{explicit}.

\begin{figure}[h!]
\begin{tikzpicture}[>=stealth,yscale=1,xscale=0.55]  
\coordinate (A) at (2,4);
\coordinate (B) at (2,5.3);
\coordinate (C) at (3.3, 5.3);
\coordinate (D) at (3.3, 4);
\coordinate (E) at (6, 4); 
\coordinate (F) at (6, 3);
\coordinate (G) at (3.3, 3);
\coordinate (H) at (2, 3);
\coordinate (I) at (1.8, 4.7) ;
\coordinate (J) at (6,4);
\coordinate (K) at (6,5.3);
\coordinate (L) at (7.3, 5.3);
\coordinate (M) at (7.3, 4);
\coordinate (N) at (10, 4); 
\coordinate (O) at (10, 3);
\coordinate (P) at (7.3, 3);
\coordinate (Q) at (6, 3);
\coordinate (R) at (5.8, 4.7) ;
\coordinate (AA) at (17,4);
\coordinate (BB) at (17,5.3);
\coordinate (CC) at (18.3, 5.3);
\coordinate (DD) at (18.3, 4);
\coordinate (EE) at (21, 4); 
\coordinate (FF) at (21, 3);
\coordinate (GG) at (18.3, 3);
\coordinate (HH) at (17, 3);
\coordinate (II) at (17,4);
\coordinate (JJ) at (17,5.3);
\coordinate (KK) at (15.7, 5.3);
\coordinate (LL) at (15.7, 4);
\coordinate (MM) at (13, 4); 
\coordinate (NN) at (13, 3);
\coordinate (OO) at (15.7, 3);
\coordinate (PP) at (17, 3);
\coordinate (III) at (16.95, 4);
\coordinate (PPP) at (16.95, 3);
\fill[color=Frangipane] (A) -- (B) -- (C) -- (D) -- (E) -- (J) -- (K) -- (L) -- (M) -- (N) -- (O) -- (P) -- (Q) -- (R) -- (F) -- (G) -- (H) -- cycle ;
\fill[color=Frangipane] (AA) -- (BB) -- (CC) -- (DD) -- (EE) -- (FF) -- (GG) -- (HH) --  (II) -- (JJ) -- (KK) -- (LL) -- (MM) -- (NN) -- (OO) -- (PP) -- cycle;
\draw[color=red] (A) -- (B) ;
\draw[color=green](B) -- (C) ;
\draw[color=red] (C) -- (D) ;
\draw[color=blue] (H) -- (A) ;
\draw (D) -- (E) ;
\draw[color=blue] (N) -- (O);
\draw[color=black] (F) -- (G);
\draw[color=green] (G) -- (H);
\draw[color=brown] (J) -- (K) ;
\draw[color=brown] (L) -- (M) ;
\draw[color=purple] (K) -- (L) ;
\draw[color=purple] (P) -- (Q) ;
\draw[color=teal] (M) -- (N) ;
\draw[color=teal] (O) -- (P) ;
\draw[color=brown] (MM) -- (NN) ;
\draw[color=blue] (EE) -- (FF) ;
\draw[color=green] (CC) -- (KK) ;
\draw[color = green] (GG) -- (OO) ;
\draw[color=red] (CC) -- (DD) ;
\draw[color=red] (KK) -- (LL) ;
\draw[color=teal] (DD) -- (EE) ;
\draw[color=teal] (FF) -- (GG) ;
\draw[color=black] (LL) -- (MM) ;
\draw[color=black] (NN) -- (OO) ;
\draw[thick, color=blue] (AA) -- (HH) ;
\draw[thick, color=brown] (III) -- (PPP) ;
\end{tikzpicture}
\caption{Translation surfaces, in $\widetilde{\mathcal H}(2)^\mathrm{odd}$ on the left, and in $\widetilde{\mathcal H}(2)^\mathrm{hyp}$ on the right.}
\label{twoExamples}
\end{figure}

We now show that the lift $\widetilde{\mathcal H}(2)\subset \mathcal H(2, 2)$ has two connected components.
This result is obtained in \cite{AN16} as a consequence of the classification of affine invariant submanifolds in $\mathcal H(2, 2)$.

\begin{prop}\label{twoCC}
The stratum $\mathcal H(2)$ lifts to two connected components $\widetilde {\mathcal H}^{\mathrm{odd}}(2)$ and $\widetilde {\mathcal H}^{\mathrm{hyp}}(2)$ in $\mathcal H(2, 2)$, one in each connected component of $\mathcal H(2, 2)$.
\end{prop}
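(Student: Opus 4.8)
The plan is to run the same mapping-class-group argument as in the previous proposition, now keeping track of the six connected components of $\Omega\mathcal T(2)$. First I would restrict the lifting map to $\mathcal L\colon \Omega\mathcal T(2)\times\bigl(H_1(\Sigma,\mathbb Z/2\mathbb Z)\setminus\{0\}\bigr)\to \mathcal H(2,2)$ and fix any $\gamma_0\in H_1(\Sigma,\mathbb Z/2\mathbb Z)\setminus\{0\}$. By \cref{invariance} together with the transitivity of $\Mod(\Sigma)$ on the set of covers (\cref{lemmeMCG}), exactly as in the proof for $\widetilde{\mathcal H}(1,1)$, one gets $\widetilde{\mathcal H}(2)=\mathcal L\bigl(\Omega\mathcal T(2)\times\{\gamma_0\}\bigr)$: for $f\in\Mod(\Sigma)$ with $f\cdot\gamma=\gamma_0$ we have $\mathcal L(\omega,\gamma)=\mathcal L(f\cdot\omega,\gamma_0)$, and $f$ permutes the components of $\Omega\mathcal T(2)$. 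Since $\Omega\mathcal T(2)$ has six connected components $\mathcal C_w$ indexed by the six Weierstrass points $w$, and $\mathcal L$ is continuous, $\widetilde{\mathcal H}(2)=\bigcup_w \mathcal L(\mathcal C_w\times\{\gamma_0\})$ is a priori a union of at most six connected sets.

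The key step is to collapse these six pieces down to two using the residual symmetry. Writing $G=\mathrm{Stab}_{\Mod(\Sigma)}(\gamma_0)$, the invariance gives $\mathcal L(\mathcal C_w\times\{\gamma_0\})=\mathcal L(\mathcal C_{f\cdot w}\times\{\gamma_0\})$ for every $f\in G$, so the piece indexed by $w$ depends only on the $G$-orbit of $w$, and I would count these orbits. The action of $\Mod(\Sigma)$ on the Weierstrass points factors through the permutation representation $\Mod(\Sigma)\to\Sy6$, while its action on $H_1(\Sigma,\mathbb Z/2\mathbb Z)$ factors through the surjection $\Mod(\Sigma)\to\Sp(4,\mathbb Z/2\mathbb Z)$. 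Under the exceptional isomorphism $\Sp(4,\mathbb Z/2\mathbb Z)\cong\Sy6$, the fifteen nonzero vectors are identified with the fifteen duads of Weierstrass points, compatibly with both actions. Hence $G$ surjects onto the stabiliser of the duad $\{w_i,w_j\}$ associated with $\gamma_0$, namely $\Sy2\times\Sy4$, whose orbits on the six points are exactly $\{w_i,w_j\}$ and its complement. Therefore $\widetilde{\mathcal H}(2)$ is a union of at most two connected sets, hence has at most two connected components.

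Finally I would invoke the two surfaces of \cref{twoExamples}: applying the Arf invariant criterion above shows that one lies in $\mathcal H(2)^{\mathrm{hyp}}$ and the other in $\mathcal H(2)^{\mathrm{odd}}$. Since any connected subset of $\mathcal H(2,2)$ lies in a single component, $\widetilde{\mathcal H}(2)$ meets both components of $\mathcal H(2,2)$ and so has at least two connected components. Combined with the upper bound it has exactly two, $\widetilde{\mathcal H}^{\mathrm{odd}}(2)$ and $\widetilde{\mathcal H}^{\mathrm{hyp}}(2)$, one in each component of $\mathcal H(2,2)$. I expect the main obstacle to be the identification of the $\Mod(\Sigma)$-action on the covers with its action on Weierstrass points through the duad/vector correspondence under $\Sp(4,\mathbb Z/2\mathbb Z)\cong\Sy6$: this compatibility is precisely what forces the six components of $\Omega\mathcal T(2)$ to coalesce into the two $G$-orbits of sizes two and four, and hence pins the number of components of the lift to two rather than some larger value.
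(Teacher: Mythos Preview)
Your proof is correct and follows essentially the same strategy as the paper: fix a cover $\gamma_0$, reduce to the action of its stabiliser on the six Weierstrass-point components of $\Omega\mathcal T(2)$, identify the two orbits of sizes two and four via the vector/duad correspondence, and separate the two pieces using the surfaces of \cref{twoExamples}. The only cosmetic difference is that you invoke the exceptional isomorphism $\Sp(4,\mathbb Z/2\mathbb Z)\cong\Sy6$ to read off the stabiliser as $\Sy2\times\Sy4$, whereas the paper exhibits explicit Dehn twists (along $c$ and along curves disjoint from $c$) realising the required permutations; note also the small typo $\mathcal H(2)^{\mathrm{hyp}/\mathrm{odd}}$ for $\mathcal H(2,2)^{\mathrm{hyp}/\mathrm{odd}}$ in your last paragraph.
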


Choose a complex structure $X$ on $\Sigma$ and let $\iota\colon \Sigma\to \Sigma$ be a holomorphic involution representing the hyperelliptic involution, the only non-trivial element in the centre of $\Mod(\Sigma)$.
The Weierstrass points of $X$ are the fixed points of $\iota$.
It follows from a remark of Arnold \cite{A68} that a non-zero element in $H_1(\Sigma, \mathbb Z/2\mathbb Z)$ can be identified with the choice of two Weierstrass points on $X$.
\begin{lemma}
The non-zero elements of $H_1(\Sigma, \mathbb Z/2\mathbb Z)$ can be naturally identified with the unordered couples of Weierstrass points of $X$.
\end{lemma}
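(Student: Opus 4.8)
The plan is to realise the identification geometrically through the hyperelliptic double cover and then to pin it down by a dimension count. Writing $\pi\colon X\to X/\iota\cong\CP$ for the quotient by the hyperelliptic involution, this is a degree two cover branched exactly over the six images of the Weierstrass points $w_1,\dots,w_6$. Since $X$ and $\Sigma$ are homeomorphic, $\dim_{\mathbb{F}_2}H_1(\Sigma,\mathbb Z/2\mathbb Z)=2g=4$, so there are $2^4-1=15$ non-zero classes, which already matches the $\binom{6}{2}=15$ unordered pairs of Weierstrass points. To a pair $\{w_i,w_j\}$ I would associate the class $\varphi(\{w_i,w_j\})=[\pi^{-1}(a_{ij})]\in H_1(X,\mathbb Z/2\mathbb Z)$, where $a_{ij}$ is an embedded arc in $\CP$ joining the two branch points with interior disjoint from the others; its preimage is a single simple closed curve because the two sheets come together at the branch endpoints.

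First I would check that $\varphi$ is well defined, independently of the arc. If $a,a'$ are two such arcs with the same endpoints, meeting (after an isotopy rel endpoints) only there, their union bounds a disc $\bar D\subset\CP$, and the preimage $\pi^{-1}(\bar D)$ is a compact subsurface of $X$ whose frontier is exactly $\pi^{-1}(a)\cup\pi^{-1}(a')$. The boundary of a subsurface is null-homologous modulo two, so $[\pi^{-1}(a)]=[\pi^{-1}(a')]$. The same bounding argument, applied to a loop through four branch points, shows that the class obtained from an even subset $S\subseteq\{w_1,\dots,w_6\}$ by pairing its points with disjoint arcs is independent of the chosen pairing. This upgrades $\varphi$ to a linear map $\psi\colon E\to H_1(X,\mathbb Z/2\mathbb Z)$ on the space $E$ of even subsets, which has dimension $2g+1=5$.

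Next I would locate the kernel and conclude with dimensions. The full set $W=\{w_1,\dots,w_6\}$ lies in $\ker\psi$: pairing it as $\{w_1,w_2\},\{w_3,w_4\},\{w_5,w_6\}$ by three disjoint arcs exhibits $\psi(W)$ as the sum of the three slit curves, and these cut $X$ into two pairs of pants, so the sum vanishes as the boundary of one of them. To finish I would establish surjectivity of $\psi$: the classes of the preimages of the arcs joining the consecutive branch points $w_1w_2,w_2w_3,w_3w_4,w_4w_5$ form a standard chain of curves generating $H_1(X,\mathbb Z/2\mathbb Z)$. Then $\dim\operatorname{im}\psi=4$ forces $\dim\ker\psi=1$, hence $\ker\psi=\langle W\rangle$ and $\psi$ descends to an isomorphism $E/\langle W\rangle\xrightarrow{\sim}H_1(X,\mathbb Z/2\mathbb Z)$.

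Finally I would read off the statement for pairs. Two even subsets become equal in $E/\langle W\rangle$ exactly when their symmetric difference is $\emptyset$ or $W$; since the complement of a two-element subset has four elements, distinct pairs remain distinct and no pair is trivial. Thus the $15$ pairs inject into the $15$ non-zero classes, and the injection is a bijection by counting. The identification is natural since it only uses the intrinsic data of $\pi\colon X\to X/\iota$ and its branch locus. The main obstacle I anticipate is not the well-definedness, which the subsurface-bounding trick handles cleanly, but rather making the kernel computation airtight, namely certifying that $W$ generates all the relations; this is precisely the content extracted from Arnold's remark \cite{A68}, and the dimension count above is what reduces it to the single vanishing $\psi(W)=0$.
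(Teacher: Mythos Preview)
Your proof is correct and essentially self-contained, but it follows a different route from the paper. The paper works in the \emph{forward} direction: it quotes Arnold \cite{A68} for an injection $H_1(\Sigma,\mathbb Z/2\mathbb Z)\hookrightarrow H_1(S_0^6,\mathbb Z/2\mathbb Z)$ induced by the quotient map $\pi$, identifies the target with partitions of the six branch points, and then uses transitivity of $\Mod(\Sigma)$ on non-separating simple closed curves to see that the image consists exactly of the fifteen pairs. You instead build the \emph{inverse} map directly, sending a pair to the class of the lifted arc, extend it linearly to even subsets, and pin down the kernel by an explicit surjectivity check and a dimension count. Your approach reproves Arnold's lemmas from scratch and avoids the $\Mod(\Sigma)$-transitivity argument, at the cost of being longer; the paper's approach is shorter but relies on the black-box citations.

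One small imprecision: in your well-definedness step you write that two arcs $a,a'$ with the same endpoints can be isotoped rel endpoints to meet only there. In the \emph{punctured} sphere $\CP\setminus\{w_k:k\ne i,j\}$ this is not always possible, as there are several isotopy classes of such arcs. The fix is already implicit in what you wrote: one does not need an embedded disc. Over $\mathbb Z/2\mathbb Z$ the chain $a+a'$ is an absolute $1$-cycle in $\CP$ (its boundary is $2(w_i+w_j)=0$), hence bounds a $2$-chain $D$ since $H_1(\CP;\mathbb Z/2\mathbb Z)=0$; then $\pi^{-1}(a)+\pi^{-1}(a')=\partial\,\pi^{-1}(D)$ regardless of how many branch points lie in $D$. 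The same remark applies to your four-point relation. With this adjustment your argument goes through cleanly.
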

\begin{proof}
By \cite[Lemma 1]{A68}, the quotient by the hyperelliptic involution gives a well defined injection $H_1(\Sigma, \mathbb Z/2\mathbb Z)\to H_1(S_0^6, \mathbb Z / 2\mathbb Z)$ to the homology of the sphere with six points removed.
The latter homology group is identified with the partitions of the Weierstrass points in two sets by \cite[Lemma 2]{A68}.
We can take a non-separating simple closed curve whose image singles out two Weierstrass points. 
Since $\Mod(\Sigma)$ acts transitively on the non-separating simple closed curves, it is the case of every such curve.
The image of $H_1(\Sigma, \mathbb Z/2\mathbb Z)$ is thus identified with the $15 = {6 \choose 2}$ unordered pairs of Weierstrass points.
\end{proof}
We can now prove \cref{twoCC}.
\begin{proof}
There are at least two of these connected components. 
Indeed we have seen that the surfaces of \cref{twoExamples} do not belong to the same component of $\mathcal H(2, 2)$.
The space $\Omega \mathcal T(2)$ has six connected components, that correspond to the Weierstrass point on which the zero of the form lies.
Since $\Mod(\Sigma)$ acts transitively on $H_1(\Sigma, \mathbb Z/2\mathbb Z)\setminus \{0\}$, the lift of $\Omega \mathcal T(2)$ is covered by the sets 
$\mathcal L(X_i\times \{\gamma\})$ where $X_i$ are the connected components of $\Omega\mathcal T(2)$ for $1\leqslant i\leqslant 6$, and $\gamma\in H_1(\Sigma, \mathbb Z/2\mathbb Z)$ is a fixed non-zero element.
The Dehn twists along a simple closed curve that single out two Weierstrass points interchange them, and leave the others unchanged, see for example \cite[Proposition 3.3]{BB01} or \cite[Section 2.1]{MW19}.
A Dehn twist $T$ along a simple closed curve $c$ representing $\gamma$ will interchange the two Weierstrass points singled out by $\gamma$, and preserves $\gamma$.
Therefore $T$ interchanges the two connected components, say $X_1$ and $X_2$, of $\Omega\mathcal T(2)$ corresponding to the Weierstrass points singled out by $\gamma$. 
Thus they lift to the same sets in $\mathcal H(2, 2)$:
\[\mathcal L(X_1\times \{\gamma\}) = \mathcal L(T(X_1)\times \{T(\gamma)\}) = \mathcal L\left (X_2\times \{ \gamma\}\right ).\]
Similarly, Dehn twists along simple closed curves not intersecting $c$ will permute the other four Weierstrass points, leaving $\gamma$ invariant.
Therefore the lift of $\mathcal H(2)$ has at most two connected components.
\end{proof}
Observe that our proof of \cref{twoCC} gives a criterion to determine the connected component where $(X, \omega)\in \Omega\mathcal T(\Sigma)$ is sent to by $\gamma\in  H_1(\Sigma, \mathbb Z/2\mathbb Z)$.
Indeed it suffices to check whether the zero of $(X, \omega)$ is one of the two Weierstrass points singled out by $\gamma$.
It is the case exactly when the lift is in $\widetilde{\mathcal H}(2)^\mathrm{hyp}$.
Therefore we have the following.
\begin{cor}\label{corHyp}
Among the fifteen lifts of $(X, \omega)\in \Omega\mathcal T(\Sigma)$, exactly five belong to the hyperelliptic component.
\end{cor}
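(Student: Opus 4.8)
The plan is to reduce the statement to a short combinatorial count, using the membership criterion already isolated in the proof of \cref{twoCC}. Since the corollary concerns lifts into $\widetilde{\mathcal H}(2)\subset \mathcal H(2,2)$, the relevant form $\omega$ has a single double zero, so that $(X,\omega)\in\Omega\mathcal T(2)$. The first point I would record is that this zero $z$ is necessarily a Weierstrass point of $X$: the hyperelliptic involution $\iota$ acts by $-1$ on holomorphic one-forms, so the divisor of $\omega$ is $\iota$-invariant, and a single double zero is forced to be a fixed point of $\iota$, hence a Weierstrass point.

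Next I would assemble the two structural facts established above. On one hand, the fifteen lifts of $(X,\omega)$ are indexed by the non-zero elements of $H_1(\Sigma,\mathbb Z/2\mathbb Z)$, which by the lemma preceding \cref{twoCC} are in natural bijection with the $\binom{6}{2}=15$ unordered pairs of Weierstrass points of $X$. On the other hand, the proof of \cref{twoCC} yields a precise criterion: the lift associated with $\gamma$ lies in $\widetilde{\mathcal H}(2)^{\mathrm{hyp}}$ exactly when the zero of $\omega$ is one of the two Weierstrass points singled out by $\gamma$.

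The conclusion is then immediate. Writing the pair corresponding to $\gamma$ as $\{p,q\}$, the lift is hyperelliptic precisely when $z\in\{p,q\}$. The pairs containing the distinguished point $z$ are exactly those of the form $\{z,w\}$ with $w$ ranging over the remaining five Weierstrass points, so there are $\binom{5}{1}=5$ of them. Therefore exactly five of the fifteen lifts belong to the hyperelliptic component, as claimed.

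In this argument there is no genuine obstacle: the substantive input is the membership criterion, which is already furnished by the proof of \cref{twoCC}, and once it is in hand the statement is a pure count. The only point calling for an explicit line of justification is that the double zero is a Weierstrass point, but this is a standard feature of genus two and occupies a single sentence.
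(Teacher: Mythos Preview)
Your argument is correct and is essentially identical to the paper's own: it uses the criterion extracted from the proof of \cref{twoCC} (the lift is hyperelliptic exactly when the zero of $\omega$ is one of the two Weierstrass points singled out by $\gamma$) and then performs the count $\binom{5}{1}=5$. The only addition is your explicit one-line justification that the double zero is a Weierstrass point, which the paper leaves implicit via the folklore description of the six components of $\Omega\mathcal T(2)$.
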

\subsection{Telling the lifts appart}
We now wish to determine when two lifts define the same abelian differential in $\Omega\mathcal M_3$. 
Let us first show that, under an extra assumption, an isomorphism of lifts comes from an isomorphism at the base level.
\begin{lemma}\label{lemmeMartens}
Let $\alpha_i\in \Omega\mathcal T(\Sigma)$, and $p_i\colon S\to \Sigma$ be covers, for $0\leqslant i\leqslant 1$.
Consider $\omega_0= p_0^*(\alpha_0)$ and $\omega_1 = p_1^*(\alpha_1)$. 
Suppose that the forms $\omega_i$ have only one translation with quotient of genus two.
Every homeomorphism $F\colon S\to S$ such that $F\cdot \omega_0 = \omega_1$ is induced by a homeomorphism $f\colon \Sigma\to \Sigma$ such that $f\circ p_0 = p_1\circ F$ and $f\cdot \alpha_0 = \alpha_1$.
\end{lemma}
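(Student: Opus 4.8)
The plan is to recover the base homeomorphism $f$ from the deck transformations of the two covers, using the uniqueness hypothesis to pin down how $F$ interacts with them. The point is that $F$ \emph{a priori} respects nothing about the covering structures; the only link between the two covers and the translation geometry is through the deck involutions, which are distinguished translations of $(S,\omega_i)$.

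First I would record the deck transformations. Since each $p_i\colon S\to\Sigma$ is a degree two, hence normal, unramified cover, it has a nontrivial deck transformation $\sigma_i\colon S\to S$, an involution with $p_i\circ\sigma_i=p_i$ and $S/\langle\sigma_i\rangle=\Sigma$ of genus two. Because $\omega_i=p_i^*(\alpha_i)$ is pulled back, $\sigma_i^*\omega_i=\omega_i$, so in linear charts $\sigma_i$ is affine with derivative the identity: it is a free involution lying in $\mathrm{Tr}(S,\omega_i)$, that is, a translation of $(S,\omega_i)$ with quotient of genus two. A Riemann--Hurwitz count shows that the only translations of $S$ with genus two quotient are precisely the free translation involutions, so the hypothesis says exactly that $\sigma_i$ is the \emph{unique} such translation of $(S,\omega_i)$.

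Next I would transport $\sigma_0$ by $F$. Since $F\cdot\omega_0=\omega_1$ means $F^*\omega_1=\omega_0$, a direct computation gives $(F\sigma_0F^{-1})^*\omega_1=(F^{-1})^*\sigma_0^*F^*\omega_1=(F^{-1})^*\sigma_0^*\omega_0=(F^{-1})^*\omega_0=\omega_1$, so $F\sigma_0F^{-1}$ is a translation of $(S,\omega_1)$. It is a nontrivial involution, and its quotient $S/\langle F\sigma_0F^{-1}\rangle$ is homeomorphic via $F$ to $S/\langle\sigma_0\rangle=\Sigma$, hence of genus two. Applying the uniqueness hypothesis to $\omega_1$ then forces $F\sigma_0F^{-1}=\sigma_1$, equivalently $F\sigma_0=\sigma_1F$. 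This identity is the crux of the argument and the only place where the hypothesis enters: it is precisely what makes $F$ compatible with the two covering structures.

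Finally I would descend. The relation $F\sigma_0=\sigma_1F$ shows that $F$ maps each fibre $\{x,\sigma_0x\}$ of $p_0$ onto a fibre $\{Fx,\sigma_1Fx\}$ of $p_1$, so $f(p_0(x)):=p_1(F(x))$ is a well-defined homeomorphism $\Sigma\to\Sigma$ satisfying $f\circ p_0=p_1\circ F$. To check $f\cdot\alpha_0=\alpha_1$, that is $f^*\alpha_1=\alpha_0$, I pull back along $p_0$: $p_0^*(f^*\alpha_1)=F^*(p_1^*\alpha_1)=F^*\omega_1=\omega_0=p_0^*(\alpha_0)$, and since $p_0$ is a surjective local homeomorphism the map $p_0^*$ is injective on one-forms, giving $f^*\alpha_1=\alpha_0$. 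The main obstacle is really just the middle step: one must verify carefully that the deck involution is genuinely a translation rather than merely affine, and that its $F$-conjugate still has a genus two quotient, so that the uniqueness clause of the hypothesis can legitimately be invoked.
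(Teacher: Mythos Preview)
Your proof is correct and follows essentially the same approach as the paper's own proof. The paper argues in three sentences that the deck transformations of the $p_i$ are translations with genus two quotient, that the uniqueness hypothesis forces $F$ to conjugate one deck involution to the other, and that therefore $F$ descends to $f$; you have spelled out each of these steps in detail, including the verification that $f\cdot\alpha_0=\alpha_1$, which the paper leaves implicit.
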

\begin{proof}
We wish to check that $F$ passes through the quotient by the deck transformations to a map $f\colon \Sigma\to \Sigma$.
The deck transformations of $p_i$ are translations with quotient of genus two. Since $\omega_i$ have only one such translation, the deck transformations are conjugated by $F$.
Therefore $F$ passes to a map $f$. 
\end{proof}
\begin{rmk}
\cref{lemmeMartens} does not hold without this extra assumption.
Indeed, there exist non-isomorphic translation surfaces in genus two with isomorphic double covers.
For example, the escalator square-tiled surface defined by
$\alpha = (1,2)(3,4)(5,6)(7,8)$ and $\beta = (2,3)(4,5)(6,7)(8,1)$, see \cite[Figure 4]{PW17}.
It is a cover of both the origamis defined by $\alpha = (1,2)(3,4)$ and $\beta = (1,3)$ and $\alpha = (2,3)$, $\beta=(1,2)(3,4)$ that are not isomorphic.
\end{rmk}
We can now describe, under our assumption, when two lifts define the same element in $\Omega\mathcal M_3$.
This happens exactly when the pairs $(\omega, \gamma)$ differ by the diagonal action of the mapping class group on $\Omega\mathcal T(\Sigma)\times H_1(\Sigma, \mathbb Z/2\mathbb Z)$, by \cref{lemmeMartens} and the invariance of $\mathcal L$ by the diagonal action of $\Mod(\Sigma)$.

\begin{prop}\label{same}
Let $(X_i, \omega_i)\in \Omega\mathcal T(\Sigma)$ and $p_i\colon S\to \Sigma$ be covering maps, for $0\leqslant i \leqslant 1$.
Suppose that $p^*(\omega_i)$ have only one non-trivial translation.
We have $p_0^*(\omega_0) \simeq p_1^*(\omega_1)$ if and only if there exists $f\in \Mod(\Sigma)$ such that both $f\cdot\omega_0 = \omega_1$ and $ f\cdot p_0  = p_1$.
\end{prop}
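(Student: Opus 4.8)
The plan is to prove the two implications separately, drawing the sufficient condition from the invariance of the lifting map (\cref{invariance}) and the necessary condition from the descent statement \cref{lemmeMartens}.

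For the implication ($\Leftarrow$), suppose there is $f\in\Mod(\Sigma)$ with $f\cdot\omega_0=\omega_1$ and $f\cdot p_0=p_1$. Since the lifts $p_i^*(\omega_i)$ are by definition the values $\mathcal L(\omega_i,p_i)$ of the lifting map, the invariance of $\mathcal L$ under the diagonal action of $\Mod(\Sigma)$ (\cref{invariance}) gives
\[\mathcal L(\omega_1,p_1)=\mathcal L(f\cdot\omega_0,\,f\cdot p_0)=\mathcal L(\omega_0,p_0),\]
whence $p_0^*(\omega_0)\simeq p_1^*(\omega_1)$. This direction uses neither the genus hypothesis nor the assumption on translations.

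For the implication ($\Rightarrow$), suppose $p_0^*(\omega_0)\simeq p_1^*(\omega_1)$. Unwinding the definition of $\Omega\mathcal M_3=\Omega\mathcal T(S)/\Mod(S)$, this provides an orientation-preserving homeomorphism $F\colon S\to S$ carrying the lift $p_0^*(\omega_0)$ to $p_1^*(\omega_1)$, that is, $F\cdot p_0^*(\omega_0)=p_1^*(\omega_1)$. Here the hypothesis that each lift $p_i^*(\omega_i)$ has only one non-trivial translation, with quotient of genus two, is exactly what places us in the setting of \cref{lemmeMartens}. That lemma then upgrades $F$ to a homeomorphism $f\colon\Sigma\to\Sigma$ satisfying $f\circ p_0=p_1\circ F$ and $f\cdot\omega_0=\omega_1$. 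The relation $f\circ p_0=p_1\circ F$ says precisely that $F$ realises an upper equivalence between the cover $f\circ p_0=f\cdot p_0$ and the cover $p_1$, so that $f\cdot p_0=p_1$ in the set of covers up to upper equivalence. Finally, $f$ is orientation-preserving, since $F$ and the covers $p_0,p_1$ all preserve orientation, so $f$ defines a class in $\Mod(\Sigma)$ realising both required equalities.

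The main obstacle is the descent of $F$ from $S$ to a homeomorphism $f$ of $\Sigma$; this is exactly the content of \cref{lemmeMartens} and is where the assumption on the number of translations is indispensable. Without that assumption $F$ need not descend, as the escalator example of the preceding remark illustrates. Once $f$ is produced, reading off the two conditions $f\cdot\omega_0=\omega_1$ and $f\cdot p_0=p_1$ is immediate from the conclusions of that lemma, and the two implications together yield the stated equivalence.
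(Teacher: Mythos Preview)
Your proof is correct and follows the same approach the paper sketches just before the statement: one direction is the invariance of $\mathcal L$ under the diagonal $\Mod(\Sigma)$-action (\cref{invariance}), and the other is the descent provided by \cref{lemmeMartens}. Your write-up simply makes explicit the details the paper leaves implicit, including the observation that the descended map $f$ is orientation-preserving.
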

\subsection{Lifting Teichmüller discs and Veech groups}
We now derive from \cref{same} a description of the Veech groups of forms covering genus two ones.
We will assume that the lifts only have one non-trivial translation.
Let $(X, \omega)\in \Omega\mathcal T(\Sigma)$.
Let us denote by $[p]$ the orbit of the cover $p$ under the action of $\mathrm{Tr}(X, \omega)$.
Since $\mathrm{Tr}(X, \omega)$ is normal, the group $\mathrm{Aff}^+(X, \omega)$ acts on these orbits. 
We thus obtain an action of $\mathrm{SL}(X, \omega)$ on the set of orbits $[p]$.
\begin{cor}\label{veech_cover}
Let $(X_0, \omega_0)\in \Omega\mathcal T(\Sigma)$ and $p\colon S\to\Sigma$ be a covering map. 
Suppose that $(X, \omega) = p^*(X_0, \omega_0)$ has only one translation with quotient of genus two.
The Veech group of $(X, \omega)$ is
\[\mathrm{SL}(X, \omega) = \{A\in \mathrm{SL}(X_0, \omega_0)\mid A\cdot [p] = [p]\}.\]
\end{cor}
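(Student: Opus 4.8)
The plan is to reduce the statement to \cref{same} by observing that the $\mathrm{SL}_2(\R)$-action commutes with the fixed pullback by $p$, and then to translate the resulting mapping-class condition into the language of the action of $\mathrm{SL}(X_0,\omega_0)$ on the orbits $[p]$ introduced just before the corollary.

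First I would record the elementary but crucial identity
\[
A\cdot p^*(X_0, \omega_0) = p^*\left(A\cdot (X_0, \omega_0)\right)
\]
for every $A\in \mathrm{SL}_2(\R)$, already at the level of $\Omega\mathcal T(S)$. This holds because the linear charts of $p^*(\omega_0)$ are obtained by postcomposing those of $\omega_0$ with $p$, so acting by $A$ on the charts of the lift is the same as acting by $A$ on the charts of the base and then lifting. In particular the number of non-trivial translations of $p^*(A\cdot\omega_0)$ equals that of $p^*(\omega_0)$, since $A$ conjugates $\mathrm{Tr}$ isomorphically onto the translation group of $A\cdot(X,\omega)$; hence the hypothesis of \cref{same} holds for the pair $p^*(\omega_0)$, $p^*(A\cdot\omega_0)$ as soon as it holds for $(X,\omega)=p^*(\omega_0)$.

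Next, $A\in \mathrm{SL}(X,\omega)$ means by definition that $A\cdot p^*(\omega_0)\simeq p^*(\omega_0)$ in $\Omega\mathcal M_3$, which by the identity above reads $p^*(A\cdot\omega_0)\simeq p^*(\omega_0)$. Applying \cref{same} with the roles of the base differentials played by $A\cdot\omega_0$ and $\omega_0$ and with $p_0=p_1=p$, this is equivalent to the existence of $f\in \Mod(\Sigma)$ with
\[
f\cdot (A\cdot\omega_0)=\omega_0 \quad\text{and}\quad f\cdot p = p.
\]
I would then unpack the first equation: it forces $A\cdot\omega_0\simeq\omega_0$, that is $A\in \mathrm{SL}(X_0,\omega_0)$, and the mapping classes $f$ realising it are precisely the classes of the affine automorphisms $\phi\in \mathrm{Aff}^+(X_0,\omega_0)$ with $d(\phi)=A$; equivalently $f$ ranges over a single coset of $\mathrm{Tr}(X_0,\omega_0)$ in the image of $\mathrm{Aff}^+(X_0,\omega_0)$ in $\Mod(\Sigma)$. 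The second equation then asks that at least one representative of this coset fix $p$ up to upper equivalence. Since $\mathrm{Tr}(X_0,\omega_0)$ acts on covers and $[p]$ is by definition its orbit, the existence of such an $f$ is exactly the assertion $A\cdot[p]=[p]$ for the induced action of $\mathrm{SL}(X_0,\omega_0)$. Combining the two conditions yields $\mathrm{SL}(X,\omega)=\{A\in \mathrm{SL}(X_0,\omega_0)\mid A\cdot[p]=[p]\}$.

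The chart computation and the coset bookkeeping are routine; the main obstacle is the careful matching in the third step between the purely topological condition ``$f\cdot(A\cdot\omega_0)=\omega_0$ in $\Omega\mathcal T(\Sigma)$'' produced by \cref{same} and the description of lifts of $A$ to $\mathrm{Aff}^+(X_0,\omega_0)$. One must check both that every such $f$ is affine with derivative $A$ and, conversely, that every affine lift of $A$ furnishes an admissible $f$, and that the well-definedness of $A\cdot[p]$, namely its independence of the chosen lift, which rests on the normality of $\mathrm{Tr}(X_0,\omega_0)$, is exactly what upgrades ``some lift of $A$ fixes $p$'' to ``$A$ fixes the orbit $[p]$''.
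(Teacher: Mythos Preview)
Your proof is correct and follows exactly the paper's route: commute the $\SL\R$-action with pullback, apply \cref{same}, then translate the mapping-class condition into the $[p]$-language. One harmless slip: the $f$ satisfying $f\cdot(A\cdot\omega_0)=\omega_0$ are affine with derivative $A^{-1}$ rather than $A$, but since $A\cdot[p]=[p]$ iff $A^{-1}\cdot[p]=[p]$ this does not affect the conclusion.
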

\begin{proof}
Let $A\in \mathrm{SL}_2(\mathbb R)$. 
We have $A\cdot p^*(\omega_0) = p^*(A\cdot \omega_0)\simeq p^*(\omega_0)$ if and only if there exists $f\in \Mod(\Sigma)$ such that both $f\cdot p = p$ and $f\cdot \omega_0 = A\cdot \omega_0$ by \cref{same}.
This happens exactly when $A\in \mathrm{SL}(X_0, \omega_0)$ and $A\cdot [p] = [p]$.
\end{proof}
\begin{rmk}\label{rmkInclusion}
If $p\colon S_g\to \Sigma_h$ is a covering map and $\omega\in \Omega \mathcal T(\Sigma_h)$, we always have the inclusion $\{A\in \mathrm{SL}(X, \omega)\mid A\cdot [p] = [p]\}\subset \mathrm{SL}(p^*(X, \omega))$. Indeed a map in $\mathrm{Aff}^+(X, \omega)$ that stabilises $p$ lifts to a map in $\mathrm{Aff}^+(p^*(X, \omega))$ with the same derivative.
\end{rmk}
We also derive from \cref{same} a criterion to decide when two lifts lie in the same Teichmüller disc.
\begin{cor}\label{sameDisc}
Two lifts of $(X, \omega)\in \Omega\mathcal T(\Sigma)$ by the covers $p_i\colon S\to \Sigma$, $0\leqslant i\leqslant 1$, with only one non-trivial translation with quotient of genus two, belong to the same Teichmüller disc if and only if $p_1$ is in the orbit of $p_0$ under the action of $\mathrm{Aff}^+(X, \omega)$.
\end{cor}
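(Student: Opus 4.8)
The plan is to combine \cref{same} with the $\SL\R$-equivariance of the lifting map. First I would recall that a Teichmüller disc is by definition a single $\SL\R$-orbit in $\Omega\mathcal M_3$, so the two lifts $p_0^*(\omega)$ and $p_1^*(\omega)$ lie in the same disc precisely when there is some $A\in\SL\R$ with $A\cdot p_0^*(\omega)\simeq p_1^*(\omega)$. Since the linear charts of $p_0^*(\omega)$ are obtained by pre-composing those of $\omega$ with $p_0$, while $A$ acts by post-composition on charts, the two operations commute and we obtain the equivariance $A\cdot p_0^*(\omega)=p_0^*(A\cdot\omega)$. Thus the condition to be in the same disc becomes $p_0^*(A\cdot\omega)\simeq p_1^*(\omega)$ for some $A\in\SL\R$.

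Next I would apply \cref{same} to the pairs $(A\cdot\omega, p_0)$ and $(\omega, p_1)$. Its hypothesis requires that both lifts have a single non-trivial translation: this holds for $p_1^*(\omega)$ by assumption, and for $p_0^*(A\cdot\omega)=A\cdot p_0^*(\omega)$ because applying $A$ conjugates $\Tr(p_0^*(\omega))$ isomorphically onto $\Tr(p_0^*(A\cdot\omega))$ and preserves the genus of the quotient, so the number of translations with a genus two quotient is unchanged. \cref{same} then yields that $p_0^*(A\cdot\omega)\simeq p_1^*(\omega)$ if and only if there exists $f\in\Mod(\Sigma)$ with both $f\cdot(A\cdot\omega)=\omega$ and $f\cdot p_0=p_1$.

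The final and most delicate step is to recognise which $f$ can occur. The equality $f\cdot(A\cdot\omega)=\omega$ in $\Omega\mathcal T(\Sigma)$ says exactly that $A\in\mathrm{SL}(X,\omega)$ and that $f$ is the class in $\Mod(\Sigma)$ of an affine homeomorphism of $(X,\omega)$ with derivative $A$; conversely, every element of the image of $\mathrm{Aff}^+(X,\omega)\to\Mod(\Sigma)$ arises in this way for a suitable $A=d(f)$. Granting this, the existence of a pair $(A,f)$ as above is equivalent to the existence of an $f$ in the image of $\mathrm{Aff}^+(X,\omega)$ with $f\cdot p_0=p_1$, that is, to $p_1$ lying in the orbit of $p_0$ under $\mathrm{Aff}^+(X,\omega)$ acting on covers by pushforward through its image in $\Mod(\Sigma)$; this is the claim. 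I expect the main obstacle to be precisely this identification of the solutions of $f\cdot(A\cdot\omega)=\omega$ with the image of the affine group. It is in essence the definition of the Veech and affine groups recalled above, but care is needed with the direction of the actions and with the fact that the relevant $A$ is pinned down only by the conventions fixing the derivative map $d$.
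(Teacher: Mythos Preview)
Your argument is correct and follows the same route as the paper: use the equivariance $A\cdot p^*(\omega)=p^*(A\cdot\omega)$ and then invoke \cref{same}. The paper's proof is terser and leaves implicit both the verification that $p_0^*(A\cdot\omega)$ still satisfies the translation hypothesis and the identification of the solutions of $f\cdot(A\cdot\omega)=\omega$ with the image of $\mathrm{Aff}^+(X,\omega)$ in $\Mod(\Sigma)$; your explicit treatment of these points is sound, and your caveat about the sign of $A$ versus $d(f)$ is well placed but harmless since one quantifies over all $A\in\SL\R$.
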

\begin{proof}
The forms $p_0^*(\omega)$ and $p_1^*(\omega)$ belong to the same Teichm\"uller disc if and only if there exists $A\in \SL \R$ such that $A\cdot p_1^*(\omega) = p_1^*(A\cdot\omega) \simeq p_0^*(\omega)$.
It follows from \cref{same} that this happens exactly when $p_1$ is the orbit of $p_0$ under the action of $\mathrm{Aff}^+(X, \omega)$.
\end{proof}
We will verify that the assumption of \cref{lemmeMartens} is satisfied in our cases.
\begin{lemma}\label{lemmaNoAuto}
Let $(X, \omega)\in \Omega\mathcal T(\Sigma)$ be a generator of either a Weierstrass curve, or the decagon curve.
Any lift $\tilde {\omega}\in \Omega\mathcal M(S)$ has only one translation with quotient of genus two, unless $(X, \omega)$ generates $W_9$.
\end{lemma}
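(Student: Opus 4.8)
The plan is to reduce the count of translations with quotient of genus two to a count of \emph{free involutions} inside the group of translations $\mathrm{Tr}(\tilde\omega)$, and then to control these using the local behaviour at the zeroes. First I would note that there is always at least one: the deck transformation $\sigma$ of $p\colon S\to\Sigma$ acts freely (the cover is unramified), satisfies $\sigma^*\tilde\omega=\tilde\omega$ since $\tilde\omega=p^*\omega$ is a pullback, and has $S/\langle\sigma\rangle=\Sigma$ of genus two, so the whole content is uniqueness. The crucial subtlety is that a nontrivial translation need \emph{not} act freely: at a zero of order $k$, of cone angle $2\pi(k+1)$, a translation may fix the cone point while cyclically permuting its $k+1$ sheets, a ``sheet rotation'' with trivial derivative. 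Away from the zeroes a translation has no fixed point (fixing a regular point forces the identity), so a translation is either free or fixes some zeroes; applying Riemann--Hurwitz to $S\to S/\langle\tau\rangle$ with $\chi(S)=-4$ shows that a \emph{free} involution has genus-two quotient while any translation fixing a zero has quotient of genus at most one. Hence ``translation with quotient of genus two'' means exactly ``free involution of $\tilde\omega$'', and I must prove $\sigma$ is the only one.

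For a Weierstrass generator the lift lies in $\mathcal H(2,2)$, with two double zeroes $P,Q$ of cone angle $6\pi$, whose sheet rotations form $\mathbb{Z}/3$. Since there is no order-two sheet rotation at a $6\pi$ cone, every involution of $\mathrm{Tr}(\tilde\omega)$ is automatically free, hence swaps $P,Q$; and as fixing one double zero forces fixing the other, the subgroup $G_0$ fixing both zeroes injects into $\mathbb{Z}/3$ via its rotation at $P$. If $\tau\neq\sigma$ were a second free involution, then $\sigma\tau$ would fix both zeroes and be a nontrivial element of $G_0$, forcing $G_0\cong\mathbb{Z}/3$. The main step is then to identify when this happens: an order-three $c\in G_0$ generates with $\sigma$ a group $\langle\sigma,c\rangle\cong\mathfrak S_3$ acting on $S$, and the quotient $X=S/\langle\sigma\rangle$ maps onto $S/\langle\sigma,c\rangle$, which Riemann--Hurwitz identifies as a torus carrying a single order-three cone point; thus $(X,\omega)$ is a degree-three branched cover of a torus, i.e. the three-square origami generating $W_9$. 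Conversely one exhibits such a $c$ on a lift of $W_9$. When $(X,\omega)$ does not generate $W_9$ we obtain $G_0=\{\mathrm{id}\}$, whence $\tau=\sigma$ and the translation is unique; this treats all Weierstrass curves uniformly, including the square discriminants, since the only obstruction is the order-three sheet rotation.

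For the decagon the lift lies in $\mathcal H(1,1,1,1)$ with four simple zeroes of cone angle $4\pi$, whose sheet rotations are involutions. An involution fixing a set $F$ of zeroes has $\chi(S/\langle\tau\rangle)=(|F|-4)/2$, and orientability forces $|F|\in\{0,4\}$: either it is free (genus-two quotient) or it fixes all four zeroes with a torus quotient. Such a ``fix-all-four'' involution would descend to an involution of the decagon with two fixed simple zeroes and torus quotient, making the decagon a branched cover of a torus, hence arithmetic; this is false, its trace field being $\mathbb{Q}(\sqrt{5})$. Ruling out also order three and order four (a simple zero admits no order-three or order-four sheet rotation, and free actions of those orders are excluded by $\chi(S)=-4$) shows $\mathrm{Tr}(\tilde\omega)$ is an elementary abelian $2$-group all of whose nontrivial elements are free; two distinct such would generate a $(\mathbb{Z}/2)^2$ acting freely on $S$, giving a quotient of Euler characteristic $-1$, which is impossible. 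So $\sigma$ is again the unique free involution.

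The step I expect to be the main obstacle is precisely the presence of sheet rotations: the naive argument ``translations are free, so $(\mathbb{Z}/2)^2$ cannot act freely and the translation is unique'' is simply wrong once the zeroes have positive order, and indeed fails for $W_9$. The real work is to pin down the fixed-point behaviour at the cone points, to show that the only extra symmetry available in $\mathcal H(2,2)$ is an order-three sheet rotation, and to convert its existence into the covering characterisation of $W_9$ (respectively, to convert a genus-one quotient in $\mathcal H(1,1,1,1)$ into arithmeticity of the decagon, which is excluded).
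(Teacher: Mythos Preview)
Your proposal is correct but takes a genuinely different route from the paper.

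The paper's proof works directly with the explicit polygonal models. For the Weierstrass case it singles out, on the $L$-shaped surface $L(b,e)$, the unique saddle connection of period $i$ with the property that turning by angle $\pi$ at its endpoint yields another saddle connection of period $i$ (rather than $i\lambda$); provided $\lambda\neq 1$, this saddle connection is unique on $L$, hence has exactly two lifts to any double cover, and a translation must permute these two, forcing $|\mathrm{Tr}(\tilde\omega)|\leqslant 2$. The condition $\lambda=1$ is then checked to occur only for $(b,e)=(2,-1)$, i.e.\ $W_9$. For the decagon the same trick works with the unique saddle connection of period $2$. Thus the paper never analyses the group structure of $\mathrm{Tr}(\tilde\omega)$ at all: it simply exhibits a pair of distinguished saddle connections on the cover that any translation must permute.

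Your approach is model-free and structural: you control $\mathrm{Tr}(\tilde\omega)$ through cone-point stabilisers and Riemann--Hurwitz. In $\mathcal H(2,2)$ you observe that a $6\pi$ cone admits only $\mathbb Z/3$ as translation-stabiliser, so every involution is free; a second free involution $\tau$ then yields $\sigma\tau$ nontrivial in $G_0\hookrightarrow\mathbb Z/3$, and since $\tau=\sigma\cdot(\sigma\tau)$ has order two this forces the dihedral relation, whence $\langle\sigma,\sigma\tau\rangle\cong\mathfrak S_3$ (your claim is correct, though this justification---that $(\sigma c)^2=\mathrm{id}$ gives $\sigma c\sigma^{-1}=c^{-1}$---deserves to be made explicit). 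The quotient $S/\mathfrak S_3$ is then a torus and $(X,\omega)$ a degree-three torus cover, which in $\mathcal H(2)$ is affinely the three-square origami, i.e.\ $W_9$. In $\mathcal H(1,1,1,1)$ your parity argument $|F|\in\{0,4\}$ and the non-arithmeticity of the decagon dispose of the fix-all-four case (one should note that such a $\tau$ commutes with $\sigma$ because $G_0\hookrightarrow\mathbb Z/2$ forces it to be unique, hence normal), and the Euler-characteristic obstruction to a free $(\mathbb Z/2)^2$ finishes.

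What each buys: the paper's argument is much shorter and entirely elementary once the polygons are drawn, but it is tied to those explicit models---the exception $W_9$ emerges as the numerical coincidence $\lambda=1$. Your argument never looks at a polygon and explains $W_9$ intrinsically as the unique Weierstrass curve whose generator is a degree-three torus cover; this makes it more portable (and indeed closer in spirit to how the paper later handles higher-degree normal covers in \cref{transG}), at the price of a longer case analysis and a few steps that want a line of justification.
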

We postpone the proof of \cref{lemmaNoAuto} to \cref{explicit} for the Weierstrass curves, and to \cref{sectionDec} for the decagon one.
Since a Teichm\"uller curve is generated by a unique Teichm\"uller disc (up to scaling), the echoes of the curve generated by $(X, \omega)$ are classified by the action of the $\mathrm{Aff}^+(X, \omega)$ on homology in these cases.
\begin{cor}\label{corEchoes}
Let $V\subset \mathcal M_2$ be either a Weierstrass curve or the decagon curve. Fix $(X, \omega)\in \Omega\mathcal T(\Sigma)$ generating $V$.
The echoes of $V$ in $\mathcal M_3$ are in correspondence with the orbits of the action of $\mathrm{Aff}^+(X, \omega)$ on $H_1(\Sigma, \mathbb Z/2\mathbb Z)\setminus \{0\}$.
\end{cor}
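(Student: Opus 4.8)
The plan is to convert the geometric notion of an echo into a combinatorial orbit count, by combining the dictionary between covers and mod-two homology classes with \cref{sameDisc}. First I would record that every lift is genuinely a Veech surface, so that it really generates a Teichm\"uller curve: by \cref{lemmaNoAuto} the lift $p^*(X,\omega)$ has only one non-trivial translation with quotient of genus two (for $(X,\omega)$ generating a Weierstrass curve other than $W_9$, or the decagon curve), and then \cref{veech_cover} identifies its Veech group with $\{A\in \mathrm{SL}(X,\omega)\mid A\cdot [p]=[p]\}$. This is the stabiliser of $[p]$ for the action of the lattice $\mathrm{SL}(X,\omega)$ on the \emph{finite} set of $\mathrm{Tr}(X,\omega)$-orbits of covers, hence of finite index, hence again a lattice. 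Thus each of the fifteen covers $p_\gamma\colon S\to\Sigma$, indexed by $\gamma\in H_1(\Sigma,\mathbb{Z}/2\mathbb{Z})\setminus\{0\}$, produces a Teichm\"uller curve in $\mathcal M_3$, and by definition these are precisely the echoes of $V$.

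Next I would reduce the coincidence of two echoes to a coincidence of Teichm\"uller discs. Two lifts $p_0^*(\omega)$ and $p_1^*(\omega)$ generate the same echo exactly when they generate the same Teichm\"uller curve; since a Teichm\"uller curve is the projection of a unique Teichm\"uller disc up to scaling, this happens exactly when the two lifts lie in a common Teichm\"uller disc. The one-translation hypothesis supplied by \cref{lemmaNoAuto} is what allows me to invoke \cref{sameDisc}, which then states that $p_0^*(\omega)$ and $p_1^*(\omega)$ lie in the same disc if and only if $p_1$ belongs to the $\mathrm{Aff}^+(X,\omega)$-orbit of $p_0$.

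It then remains to translate the $\mathrm{Aff}^+(X,\omega)$-action on covers into the action on $H_1(\Sigma,\mathbb{Z}/2\mathbb{Z})\setminus\{0\}$. The natural map $\mathrm{Aff}^+(X,\omega)\to\Mod(\Sigma)$, followed by the pushforward action of $\Mod(\Sigma)$ on covers, is by the proof of \cref{lemmeMCG} conjugate to the action of $\Mod(\Sigma)$ on $H_1(\Sigma,\mathbb{Z}/2\mathbb{Z})\setminus\{0\}$, that is, to the reduction modulo two of the homology representation. Under the identification $p_\gamma\leftrightarrow\gamma$, the $\mathrm{Aff}^+(X,\omega)$-orbits of covers therefore coincide with the orbits of the monodromy group modulo two on $H_1(\Sigma,\mathbb{Z}/2\mathbb{Z})\setminus\{0\}$. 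Assembling the three steps, the assignment sending a cover $p_\gamma$ to the echo it generates descends to a bijection between these orbits and the echoes of $V$.

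The step I expect to be most delicate is the implication \emph{same echo $\Rightarrow$ same disc}: one must rule out that two lifts lying in distinct $\mathrm{Aff}^+(X,\omega)$-orbits nonetheless project to the same curve in $\mathcal M_3$, and this is exactly what the one-translation hypothesis of \cref{lemmaNoAuto} secures through \cref{same} and \cref{sameDisc}, since that hypothesis forces any isomorphism of lifts to descend to the base genus-two surface. The genuinely exceptional case is $W_9$, which is excluded from \cref{lemmaNoAuto}: there the extra translations invalidate \cref{same}, so $W_9$ must either be removed from the statement or handled by a separate direct determination of the orbit structure.
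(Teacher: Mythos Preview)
Your proposal is correct and follows essentially the same approach as the paper: the non-$W_9$ cases are dispatched by combining \cref{lemmaNoAuto} with \cref{sameDisc}, and you correctly flag that $W_9$ requires a separate direct treatment. The paper carries out exactly that separate analysis in \cref{explicit}, checking by hand that among the fifteen lifts of the three-square origami exactly one has translation group of order larger than two; since the $\mathrm{Aff}^+$-action preserves the order of the translation group, this exceptional cover is fixed, and \cref{sameDisc} applies to the remaining fourteen.
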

We prove \cref{corEchoes} for $W_9$ in \cref{explicit}. The other cases follow from \cref{lemmaNoAuto} and \cref{sameDisc}.
\subsection{Echoes are covers}
Under our assumption, each echo $\tilde V$ generated by $(X, \omega)$ of $V$ generated by $(X_0, \omega_0)$ is naturally a cover of $V$ since $\tilde V = \mathbb H^2/\mathrm{SL}_2(X, \omega)$ and $V = \mathbb H^2/\mathrm{SL}_2(X_0, \omega_0)$.
The degree $d = [\mathrm{SL}_2(X, \omega)\colon \mathrm{SL}_2(X_0, \omega_0)]$ of this cover is the size of the orbit of the corresponding $[p]$.
\section{Generators of the monodromy groups of Weierstrass curves}\label{explicit}
In this section we exhibit elements of the monodromy groups of generators of Weierstrass curves.
\subsection{L-shaped billiards}
In \cite{MCM05}, McMullen gave explicit generators for the Teichm\"uller curves $W_D\subset \mathcal M_2$.
These generators are given as surfaces obtained by unfolding a billiard table of the form $L(b, e)$: the L-shaped polygon of \cref{Lshaped}, having top square of side $\lambda = \frac{e + \sqrt{e^2 + 4b}}{2}$, while the remaining rectangle has horizontal sides of length $b$ and vertical side of unit length.
It turns out that unfolding this billiard table leads, after rescaling, to the translation surface that we still denote $L(b, e)$, drawn in \cref{Lshaped}.
The integers $(e,b)$ are chosen so that $b \geqslant 1$, $e\in \{-1, 0, 1\}$ and if $e = 1$, then $b$ is even. We also assume that $e+1 < b$.
We have $\lambda^2 = e\lambda + b$. 
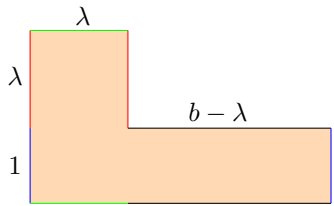
\begin{figure}[h!]
\begin{tikzpicture}[>=stealth,yscale=1,xscale=1]  
\coordinate (A) at (2,4);
\coordinate (B) at (2,5.3);
\coordinate (C) at (3.3, 5.3);
\coordinate (D) at (3.3, 4);
\coordinate (E) at (6, 4); 
\coordinate (F) at (6, 3);
\coordinate (G) at (3.3, 3);
\coordinate (H) at (2, 3);
\coordinate (I) at (1.8, 4.7) ;
\fill[color=Frangipane] (A) -- (B) -- (C) -- (D) -- (E) -- (F) -- (G) -- (H) -- cycle;
\draw[color=red] (A) -- (B) ;
\draw[color=green](B) -- (C) ;
\draw[color=red] (C) -- (D) ;
\draw[color=blue] (H) -- (A) ;
\draw (D) -- (E) ;
\draw[color=blue] (E) -- (F);
\draw[color=black] (F) -- (G);
\draw[color=green] (G) -- (H);
\node at (1.8, 4.7) {$\lambda$};
\node at (1.8, 3.5) {$1$} ;
\node at (2.7, 5.5) {$\lambda$} ;
\node at (4.5, 4.2) {$b - \lambda$} ;
\end{tikzpicture}
\caption{Translation surface $L(b, e)$.}
\label{Lshaped}
\end{figure}
\begin{theorem}[McMullen \cite{MCM05}]
The Teichm\"uller curve generated by $L(b, e)$ is a Weierstrass curve of discriminant $D=e^2 + 4b$. 
When $D>9$ is such that $D\equiv 1\mod 8$, the curves generated by $L(b,e)$ and $L(b, -e)$ do not have the same spin invariant.
\end{theorem}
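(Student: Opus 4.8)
The plan is to prove the two assertions separately: first that $L(b,e)$ generates a component of the Weierstrass curve $W_D$ with $D = e^2 + 4b$, and then that $L(b,e)$ and $L(b,-e)$ carry opposite spin invariants in the relevant range.

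For the first assertion I would begin with the geometry. One checks directly that $(X,\omega) := L(b,e)$ lies in $\mathcal H(2)$: all vertices of the L-shaped polygon of \cref{Lshaped} are identified to a single point of total cone angle $6\pi$, so $\omega$ has a single zero of order two. I would then record that $L(b,e)$ is a Veech surface. In the horizontal direction it decomposes into two cylinders, the square of modulus $\lambda/\lambda = 1$ and the rectangle of modulus $1/b$, which are commensurable; the same holds in the vertical direction. As in \cref{DehnTwistAnd} each direction thus yields a parabolic element of the affine group, and the Thurston--Veech construction applied to these two transverse cylinder decompositions shows $(X,\omega)$ generates a Teichm\"uller curve.

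The heart of the first part is the real multiplication. I would pick a symplectic basis of $H_1(X,\mathbb Z)$ adapted to the horizontal cylinders, with core curves $\gamma_1,\gamma_2$ satisfying $\int_{\gamma_1}\omega = \lambda$ and $\int_{\gamma_2}\omega = b$, together with transverse dual classes. Using the defining relation $\lambda^2 = e\lambda + b$, multiplication by $\lambda$ on $\mathbb Z \oplus \mathbb Z\lambda$ is represented by $M = \begin{pmatrix} 0 & b \\ 1 & e \end{pmatrix}$, whose characteristic polynomial $x^2 - ex - b$ has discriminant $e^2 + 4b = D$ in the convention $\mathcal O_D = \mathbb Z[x]/(x^2+ax+b)$. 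I would promote $M$ to an integral endomorphism $T$ of $H_1(X,\mathbb Z)$, with its action on the transverse classes dictated by self-adjointness with respect to the intersection form, verify that $T$ is $\mathbb C$-linear on $\Omega^\vee(X)$ so that it descends to an endomorphism of $\mathrm{Jac}(X)$, and check that its dual preserves $\mathbb C\omega$ with eigenvalue $\lambda$, making $\omega$ an eigenform. By the definition of $W_D$ recalled earlier, this places $(X,\omega)$ on $W_D$ and identifies the generated curve as a component of $W_D$. The delicate arithmetic point here is to confirm that $\mathbb Z[T] \cong \mathcal O_D$ is a \emph{proper} subring of $\mathrm{End}(\mathrm{Jac}(X))$, which is what pins down the discriminant rather than a proper multiple.

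For the spin statement I would exploit that $D = e^2 + 4b$ is invariant under $e \mapsto -e$, so $L(b,e)$ and $L(b,-e)$ both generate components of the same $W_D$; when $D \equiv 1 \bmod 8$ and $D > 9$ there are exactly two such components, separated by the spin invariant $\epsilon \in \{0,1\}$. I would then invoke McMullen's spin formula for eigenforms in $\mathcal H(2)$, written through the prototype attached to $L(b,e)$, and isolate its dependence on $e$: the formula depends on $e$ through a single term that is linear in $e$ (of the shape $\tfrac{e-f}{2} \bmod 2$), the remaining contributions depending only on $b$. Since $e = \pm 1$ is the only case compatible with $D \equiv 1 \bmod 8$, flipping the sign of $e$ changes this term by $\tfrac{1 - (-1)}{2} = 1 \bmod 2$, so $L(b,e)$ and $L(b,-e)$ land on distinct components. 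I expect the main obstacles to be the two technical checks flagged above, namely the verification that $T$ descends to a proper order $\mathcal O_D$ acting on $\mathrm{Jac}(X)$ with $\omega$ as eigenform, and the careful parity bookkeeping inside McMullen's Arf-invariant spin formula; the cone-angle and cylinder computations of the first paragraph are routine.
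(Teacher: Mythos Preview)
The paper does not prove this statement at all: it is quoted verbatim as a theorem of McMullen from \cite{MCM05} and used as background input, with no argument supplied. So there is no ``paper's own proof'' to compare against; your proposal is an attempt to reconstruct McMullen's original argument rather than something the present paper does.

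On the substance of your sketch: the real-multiplication part is essentially McMullen's approach and is fine in outline --- one exhibits a self-adjoint $T$ with $T^2 = eT + b$ acting on $H_1$, checks $\omega$ is an eigenform, and reads off the discriminant. Your spin argument via the prototype formula is also the right idea. However, one step is a genuine gap: you claim that commensurable moduli in the horizontal and vertical directions, via ``the Thurston--Veech construction'', show that $L(b,e)$ is a Veech surface. This does not follow. Two transverse parabolic directions yield two parabolic elements in $\mathrm{SL}(X,\omega)$, but a group generated by two parabolics need not be a lattice, and the Thurston--Veech mechanism produces pseudo-Anosovs, not lattices. That $L(b,e)$ has lattice Veech group when $D$ is not a square is precisely the deep content of McMullen's theorem (and independently Calta's): it is proved by showing that the $\mathrm{SL}_2(\mathbb R)$-orbit closure is contained in the eigenform locus over the Hilbert modular surface, which forces the orbit to be closed. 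Your outline should replace the Thurston--Veech sentence with this orbit-closure argument via real multiplication; the parabolics alone do not suffice.
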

We use this explicit description to prove \cref{lemmaNoAuto} for Weierstrass curves. Recall that this proves \cref{corEchoes} for $V$ a Weierstrass curve that is not $W_9$.
\begin{proof}
Suppose that $\lambda \neq 1$.
The saddle connections on $L$ lift on a double cover $\tilde L$ to saddle connections with the same periods.
There are two saddle connections on $L$ with period $i$. 
Of those two, only one has the property that the saddle connection defined by turning with an angle $\pi$ from its endpoint has period $i$. 
Indeed, turning by an angle $\pi$ at the endpoint of the other will give a saddle connection with period $i\lambda$.
Namely it is the saddle connection going up from the only conical point in the interior of the bottom edge of $L$.
Therefore $\tilde L$ has exactly two saddle connections satisfying this property.
A translation of $\tilde L$ must either fix them, or interchange them.
Hence the group of translations of $\tilde L$ is of order two.
We have $\lambda = 1$ if and only if $2 = e + \sqrt{e^2 + 4b}$, that is when $b = 1 - e$.
We have assumed that $e+1 < b$, so the only possibility is $b=2$ and $e = -1$. The curve generated by $L(2, -1)$ is $W_9$.
\end{proof}
We now prove \cref{corEchoes} for $V = W_9$.
\begin{proof}
The curve $W_9$ is generated by an origami $\mathcal O$ with three squares.
We check directly that among the fifteen lifts of $\mathcal O$, exactly one has a group of translation of order larger than two.
The action of $\mathrm{Aff}^+(\mathcal O)$ on covers $S\to \Sigma$ preserves the order of the translation group of the associated lift, hence fixes this cover. Moreover, for the other covers, there are only two translations and we can apply \cref{sameDisc}.
\end{proof}
\subsection{Marking and basis of homology}
Let us first fix $(a_1, b_1, a_2, b_2)$, a basis for $H_1(L, \mathbb Z)$. 
We take for $a_1$ and $b_1$ respectively the horizontal and vertical sides of the top square of side $\lambda$. 
For $a_2$ and $b_2$, we take the bottom horizontal side of $L$ and the vertical (blue) side at the right of $L$, respectively.
We orient $a_1$ and $a_2$ towards the right, while $b_1$ and $b_2$ are oriented as going up.
This basis is symplectic.
We fix an identification $\Sigma\simeq L$.
We will use Dehn twists in cylinders decompositions of $L$ as in \cref{DehnTwistAnd} to study the action of its affine group on homology.
\subsection{Action on homology of Dehn twists}
Suppose that $s$ is a direction that decomposes $L$ into two cylinders $C_1$ and $C_2$.
Their moduli $m_1$ and $m_2$ are commensurable, since $L$ is a Veech surface.
Let $m$ be the lowest common multiple of $\frac{1}{m_1}$ and $\frac{1}{m_2}$ and write
$m\cdot m_1 = {k_1}$ and $m \cdot m_2 = {k_2}$. Since the action on homology modulo two of a Dehn twist in a cylinder is of order two, the action of the Dehn twist of \cref{DehnTwistAnd} only depends on the parity of $k_1$ and $k_2$.
Namely, the action is the same as the one of the Dehn twist in $C_1$ (resp. $C_2$) if $k_1$ is odd and $k_2$ is even (resp. $k_1$ is even and $k_2$ is odd). If both $k_1$ and $k_2$ are odd, then the action on homology is the same as the one of the composition of a Dehn twist in $C_1$ and a Dehn twist in $C_2$. Observe that we have $\frac{m_1}{m_2} = \frac{k_1}{k_2}$.
\subsection{Horizontal cylinder decomposition}
Let us compute the moduli of the cylinders that are in the horizontal direction.
The top square of side $\lambda$ has modulus $m_1=1$. The rectangle below has modulus $m_2 =\frac{1}{b}$.
Thus the affine group of $L(b, e)$ contains the map that is a right-Dehn twist in the bottom rectangle, and the $b$-th power of a right-Dehn twist in the top square.
Its action on homology gives the following element of the monodromy group in our chosen basis:
\[H = \begin{pmatrix}
 1 & b & 0 & 0\\
 0 & 1 & 0 & 0\\
 0 & 0 & 1 & 1\\
 0 & 0 & 0 & 1
\end{pmatrix}.
\]
\subsection{Vertical cylinder decomposition}
The vertical direction gives a decomposition into two cylinders. The cylinder on the left has modulus $m_1 = \frac{\lambda}{\lambda+1}$, while the second has modulus $m_2 = {b-\lambda}$.
Since $\lambda^2 = e\lambda + b$, their ratio is
\[\frac{m_2}{m_1} = \frac{\left (b-\lambda\right )\left (\lambda+1\right )}{\lambda} = \frac{b\lambda + b - \lambda^2 - \lambda}{\lambda} = b-e - 1.
\]
Similarly to the horizontal decomposition, we obtain an element of the monodromy group: 
\[V = 
\begin{pmatrix}
 1 & 0 & 0 & 0\\
 1 & 1 & 1 & 0\\
 0 & 0 & 1 & 0\\
 1 & 0 & b-e & 1
\end{pmatrix}.
\]
We can obtain the third column of $V$ by remarking that the image by the Dehn twist of $(-1, 0, 1, 0)$ is $(-1, 0, 1, b-e-1)$.
\subsection{Rotation}
We want to prove that the monodromy group modulo two strictly contains the one generated by $H$ and $V$, when $D\equiv 1\mod 8$.
\begin{lemma}\label{D1}
If $D\equiv 1\mod 8$, then the monodromy group modulo two of $L$ is not generated by the horizontal and vertical twists.
\end{lemma}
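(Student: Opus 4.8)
The plan is to reduce $H$ and $V$ modulo two, to identify exactly the group they generate, and then to escape that group with a single further element coming from the rotation.

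First I would pin down the arithmetic. Since $D = e^2 + 4b$, the case $e = 0$ gives $D = 4b \equiv 0$ or $4 \pmod 8$, so $D\equiv 1 \pmod 8$ forces $e = \pm 1$; then $D = 1 + 4b \equiv 1 \pmod 8$ precisely when $b$ is even. I would set aside $W_9$, the only prototype with $\lambda = 1$ (namely $L(2,-1)$), which is an origami and is treated by a direct computation. For $b$ even and $e$ odd we have $b \equiv 0$ and $b - e \equiv 1 \pmod 2$, so
\[
H \equiv \begin{pmatrix} 1&0&0&0\\0&1&0&0\\0&0&1&1\\0&0&0&1\end{pmatrix},\qquad
V \equiv \begin{pmatrix}1&0&0&0\\1&1&1&0\\0&0&1&0\\1&0&1&1\end{pmatrix} \pmod 2.
\]
A direct check recognises these reductions as the symplectic transvections $t_{a_2}$ and $t_{b_1+b_2}$, where $t_v(x) = x + (x\cdot v)\,v$.

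Second, I would identify the group they generate precisely. As $a_2\cdot(b_1+b_2) = a_2\cdot b_2 = 1$ in $H_1(\Sigma, \Z/2\Z)$, the classes $a_2$ and $b_1+b_2$ span a symplectic plane $P$, and the two transvections generate the full group of symplectic automorphisms of $P$, which is $\SL{\mathbb F_2}\cong \Sy{3}$; this group acts on $P$ and fixes the complementary symplectic plane $P^\perp = \langle a_1 + a_2,\, b_1\rangle$ pointwise. Hence every element of $\langle H, V\rangle$ modulo two fixes both $b_1$ and $a_1 + a_2$. This reduces the lemma to exhibiting a single element of the monodromy group modulo two that moves, say, $b_1$.

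Third comes the rotation. I would produce an affine automorphism $R$ of $L(b,e)$ that is not a word in the two multitwists, namely an elliptic (rotation) element of the Veech group, and compute the images of the marked basis $(a_1, b_1, a_2, b_2)$ under its class $\bar R$ modulo two. By the previous paragraph it suffices to verify $\bar R(b_1)\neq b_1$ (equivalently $\bar R(a_1+a_2)\neq a_1+a_2$) in $H_1(\Sigma, \Z/2\Z)$. Since $R$ is affine, $\bar R$ lies in the monodromy group modulo two but, by the pointwise‑fixed plane $P^\perp$, cannot lie in $\langle H, V\rangle$ modulo two, giving the strict inclusion. The main obstacle is exactly this step: realising $R$ explicitly as an affine automorphism rather than merely as a derivative in the Veech group, and tracking its homological action modulo two; once $\bar R$ is in hand the conclusion is immediate, and I would also check that one choice of $R$ serves for both signs of $e$, since both fall under $b$ even.
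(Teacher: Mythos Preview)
Your reduction of $H$ and $V$ modulo two to the transvections $t_{a_2}$ and $t_{b_1+b_2}$, and your identification of $\langle H, V\rangle$ as $\Sy 3$ fixing $P^\perp = \langle a_1+a_2,\, b_1\rangle$ pointwise, are correct and agree with the paper (which phrases this as $(0,1,0,0)$ and $(1,1,1,0)$ being fixed by both $H$ and $V$, and records $|\langle H,V\rangle|=6$).

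The gap is in your third step. You propose a single elliptic element $R$ serving for both $e=-1$ and $e=1$, but you never produce it, and the paper's proof shows this is exactly where the difficulty concentrates. For $e=-1$ the paper does precisely what you suggest: rotating by $\pi/2$ and rescaling sends $L(b,-1)$ back to itself (because $b=\lambda^2+\lambda$), yielding an affine self-map that swaps the two handles and hence moves $b_1$. For $e=1$, however, the same rotation--rescaling carries $L(b,1)$ not to itself but to a \emph{different} $L$-shaped surface $\mathcal L(b,1)$ (top square of side $\lambda-2$, bottom width $b-2$); there is no evident elliptic element of $\mathrm{Aff}^+(L(b,1))$ available. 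The paper instead manufactures the extra monodromy element from \emph{diagonal} cylinder decompositions: slope $2/b$ when $b\equiv 2\bmod 4$, slope $2/(b-2)$ on the model $\mathcal L$ when $b\equiv 0\bmod 4$, and a separate ad~hoc treatment of $b=6$ (via the origami description). In each subcase one computes the ratio of cylinder moduli to determine the parities of the twist exponents and checks that the resulting multitwist moves a vector out of its $\langle H,V\rangle$-orbit. This case analysis is the substantive content of the lemma for $e=1$, and your proposal does not supply it; the sentence ``I would also check that one choice of $R$ serves for both signs of $e$'' is precisely the assertion that fails.
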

Recall that $D = e^2 + 4b$. We thus assume for the rest of the section that $b$ is even and $e\in \{-1, 1\}$.
\subsubsection{The case $e=-1$}
There exists a simple argument to prove \cref{D1} when $e = -1$.
In this case, the following matrix belongs to the Veech group of $L$
\[\begin{pmatrix}
    \frac{b}{\lambda+1} & 0\\
    0 & \frac{1}{\lambda}
\end{pmatrix}
\begin{pmatrix}
    0 && 1\\
    -1 && 0
\end{pmatrix}.
\]
Indeed, the rotation of order four applied to $L$ gives \cref{LshapedRotated}.
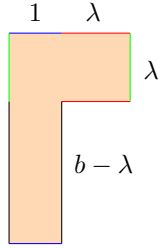
\begin{figure}[h!]
\begin{tikzpicture}[>=stealth,yscale=0.7,xscale=0.7]  
\coordinate (A) at (4, -2);
\coordinate (B) at (5.3, -2);
\coordinate (C) at (5.3, -3.3);
\coordinate (D) at (4, -3.3);
\coordinate (E) at (4, -6); 
\coordinate (F) at (3, -6);
\coordinate (G) at (3, -3.3);
\coordinate (H) at (3, -2);
\coordinate (I) at (4.7, -1.8) ;
\fill[color=Frangipane] (A) -- (B) -- (C) -- (D) -- (E) -- (F) -- (G) -- (H) -- cycle;
\draw[color=red] (A) -- (B) ;
\draw[color=green](B) -- (C) ;
\draw[color=red] (C) -- (D) ;
\draw[color=blue] (H) -- (A) ;
\draw (D) -- (E) ;
\draw[color=blue] (E) -- (F);
\draw[color=black] (F) -- (G);
\draw[color=green] (G) -- (H);
\node at (4.6, -1.6) {$\lambda$};
\node at (3.5, -1.6) {$1$} ;
\node at (5.7, -2.7) {$\lambda$} ;
\node at (4.8, -4.5) {$b - \lambda$} ;
\end{tikzpicture}
\caption{Translation surface $L(b, e)$ rotated.}
\label{LshapedRotated}
\end{figure}
Then the rescaling of the axes changes the bottom rectangle to a new one of sides 
$\frac{b-\lambda}{\lambda}$ and $\frac{b}{\lambda+1}$. But since $b = \lambda^2 + \lambda$, it becomes a square of side $\lambda$. Similarly, the top rectangle changes to a new one of width $b$ and height $1$.
A corresponding homeomorphism in the Veech group exchanges the two handles $(a_1, b_1)$ and $(a_2, b_2)$. Its action on $H_1(\Sigma, \mathbb Z/2\mathbb Z)$ will send $v = (0, 1, 0, 0)$ to $(-1, 0, 1, 0)$, while $v$ is fixed by both the $H$ and $V$. This proves \cref{D1} when $e = -1$.
\subsubsection{The case $e=1$}
We suppose for the rest of the section that $e=1$. The $\mathrm{GL}_2^+(\mathbb R)$-orbit of $L(b, e)$ contains the $L$-shaped translation surface $\mathcal L(b, e)$, that is the same as $L(b, e)$, but with the top square of side $\lambda - 2$ and the bottom rectangle of width $b-2$.
Indeed, apply the following matrix to $L(b, e)$:
\[
\begin{pmatrix}
    \frac{b-\lambda}{\lambda} & 0\\
    0 & \frac{1}{\lambda}
\end{pmatrix}
\begin{pmatrix}
    0 && 1\\
    -1 && 0
\end{pmatrix}.
\]
After rotation and rescaling, the top rectangle has width 
\[\frac{(b-\lambda)(\lambda +1)}{\lambda} = b-e-1 = b-2.\]
The remaining rectangle is a square of side $\frac{b-\lambda}{\lambda} = \lambda - e - 1 = \lambda - 2$.
We mark the surface $\mathcal L(b, e)$ similarly to $L(b, e)$.
\subsection{Diagonal cylinder decompositions}
Our goal is to prove \cref{D1} in the remaining case $e=1$.
\subsubsection{The case $b\equiv 2\mod 4$}
Let us start by assuming that $b$ is not a multiple of four.
Let us consider the cylinder decomposition associated with the slope $s=\frac{2}{b}$.
We assume for now that $\frac{b}{2} > \lambda$. 
This is equivalent to $b > 6$. 
This requirement allows us to have a simple picture for the cylinder decomposition, as shown in \cref{slope}.
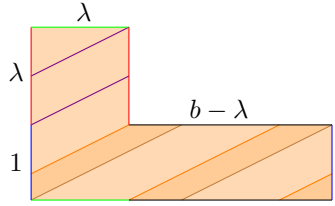
\begin{figure}[h!]
\begin{tikzpicture}[>=stealth,yscale=1,xscale=1]  
\coordinate (A) at (2,4);
\coordinate (B) at (2,5.3);
\coordinate (C) at (3.3, 5.3);
\coordinate (D) at (3.3, 4);
\coordinate (E) at (6, 4); 
\coordinate (F) at (6, 3);
\coordinate (G) at (3.3, 3);
\coordinate (H) at (2, 3);
\coordinate (I) at (1.8, 4.7) ;
\coordinate (J) at (4, 4);
\coordinate (K) at (4, 3);
\coordinate (L) at (3.3, 4.65);
\coordinate (M) at (2, 4.65);
\coordinate (N) at (5.3 , 4);
\coordinate (O) at (5.3, 3);
\coordinate (P) at (6, 3.35);
\coordinate (Q) at (2, 3.35);
\fill[color=Frangipane] (A) -- (B) -- (C) -- (D) -- (E) -- (F) -- (G) -- (H) -- cycle;
\fill[color=Peach] (Q) -- (D) -- (J) -- (H) -- cycle ;
\fill[color=Peach] (G) -- (N) -- (E) -- (K) -- cycle ;
\fill[color=Peach] (O) -- (P) -- (F) -- cycle ;
\draw[color=brown] (K) -- (E) ;
\draw[color = brown] (H) -- (J) ;
\draw[color=violet] (A) -- (L) ;
\draw[color=violet] (M) -- (C) ;
\draw[color = orange] (G) -- (N) ;
\draw[color=orange] (O) -- (P) ;
\draw[color=orange] (Q) -- (D) ;
\draw (D) -- (E) ;
\node at (1.8, 4.7) {$\lambda$};
\node at (1.8, 3.5) {$1$} ;
\node at (2.7, 5.5) {$\lambda$} ;
\node at (4.5, 4.2) {$b - \lambda$} ;
\draw[color=blue] (E) -- (F);
\draw[color=black] (F) -- (G);
\draw[color=green] (G) -- (H);
\draw[color=red] (A) -- (B) ;
\draw[color=green](B) -- (C) ;
\draw[color=red] (C) -- (D) ;
\draw[color=blue] (H) -- (A) ;
\end{tikzpicture}
\caption{Cylinder decomposition with slope $s=2/b$.}
\label{slope}
\end{figure}
The elements $\alpha = (0, 0, 1, 2)$ and $\beta = (\frac{b}{2}, 1, 0, 0)$ of $H_1(\Sigma, \mathbb Z)$ represent saddle connections with slope $s$, see the brown and violet lines in \cref{slope}. Let $\gamma = (1, 0, 0, 1)$ and $\delta = (1, 0, 0, 0)$. The elements $(\alpha, \gamma)$ form a basis of the homology of the handle formed by the smaller cylinder $C_1$, and $(\alpha + \beta, \delta)$ form a basis of the homology of the handle formed by the longer cylinder $C_2$.
Let us compute the moduli of these cylinders.
Let us start with the cylinder $C_1$.
The period of $\alpha$ is $b + 2i$, while the period of $\gamma$ is $\lambda + i$.
We wish to compute the modulus of the cylinder defined by these complex numbers.
Multiplying both numbers by a complex number does not change the modulus of the cylinder they form, hence we can consider the parallelogram formed by $b^2 + 4$ and $(\lambda + i)(b - 2i)$. Thus the modulus $m_1$ of $C_1$ is:
\[m_1 = \frac{b - 2\lambda}{b^2+4}.\]
The period of $\alpha + \beta$ is $\frac{b}{2}\lambda + b + i(\lambda + 2)$, and the period of $\delta$ is $\lambda$.
Thus the modulus $m_2$ of $C_2$ is:
\[m_2 = \frac{\lambda(\lambda+2)}{(\frac{b}{2}\lambda + b)^2 + (\lambda+2)^2} = \frac{4\lambda}{(b^2+4)({\lambda} + 2)}.\]
Since $\lambda^2 = e\lambda + b$, we have
\[\frac{m_1}{m_2} = \frac{(b-2\lambda)(\lambda + 2)}{4\lambda} = \frac{b\lambda + 2b - 2\lambda^2- 4\lambda}{4\lambda} = \frac{\frac{b}{2} - e - 2}{2}.\]
Since $b$ is not a multiple of four, $\frac{b}{2} - e - 2$ is even, and the twist associated with the slope $s$ will send $\delta$ to $\delta + \alpha + \beta$ in the monodromy modulo two group. Therefore, the action of this group on $H_1(\Sigma, \mathbb Z/2\mathbb Z)$ must have $(1, 0, 0, 0)$ in the orbit of $(1, 1, 1, 0)$. This is not done by any element of the group generated by the horizontal and vertical twists. 
Indeed the vector $(1, 1, 1, 0)$ is fixed by both $V$ and $H$.
We thus have proven \cref{D1} when $b>6$ is not a multiple of four.
\subsubsection{The case $b\equiv 0\mod 4$}
We now turn to the case where $b$ is a multiple of four.
To deal with this case, we will for convenience use the model $\mathcal L(b, e)$ instead of $L(b, e)$.
Since they are in the same $\mathrm{GL}^+_2(\mathbb R)$-orbit, their monodromy groups modulo two are isomorphic.
The horizontal and vertical cylinder decomposition give the same matrices $H$ and $V$. Indeed when computing the moduli of the associated cylinders, we obtain $\frac{m_1}{m_2} = b-2$ for the horizontal one, and for the vertical one
\[
\frac{m_2}{m_1} = \frac{(b-\lambda)(\lambda-1)}{\lambda - 2} = \frac{b\lambda - 2b - e\lambda + \lambda}{\lambda - 2} = b.
\]
We consider, similarly to the previous diagonal cylinder decomposition, the slope $s = \frac{2}{b-2}$.
We obtain a picture similar to \cref{slope}, provided $\lambda - 2 < \frac{b-2}{2}$. 
This is actually always the case: the inequality is satisfied as soon as $b > 2$, and $b$ is a multiple of four. 
Let us compute the moduli of the cylinders $C_1$ and $C_2$. We have as in the previous case, setting $b' = b-2$ and $\lambda' = \lambda - 2$:
\[\frac{m_1}{m_2} = \frac{(b'-2\lambda')(\lambda' + 2)}{4\lambda'} = \frac{(b-2\lambda + 2)\lambda}{4(\lambda - 2)} = \frac{b\lambda - 2(e\lambda + b) + 2\lambda}{4(\lambda - 2)} = 
\frac{b}{4}.\]
Since $b$ is a multiple of $4$, as before the twist in the cylinder $C_2$ will create an element of the monodromy group that is not in the group generated by $H$ and $V$. We thus have proven \cref{D1} when $b$ is a multiple of four.
\subsubsection{The case $b=6$}
The only cases remaining are when $b \leqslant 6$ is not a multiple of four. 
Note that when $b=2$, since $e+1 < b$, we have $e=-1$. This corresponds to the fact that there is only one Weierstrass curve of discriminant $D=9$ by \cite[Theorem 1.1]{MCM05}.
Hence we just need to consider the case $b=6$. 
This case corresponds to a square-tiled surface, tiled by five squares.
Let us give two ways to prove \cref{D1} in this case.
One way, that gives a uniform approach to \cref{D1} but that is tedious, is to consider the cylinder decomposition associated with the slope $s = \frac{1}{2}$, as pictured in \cref{bis6}.
\begin{figure}[h!]
\begin{tikzpicture}[>=stealth,yscale=0.8,xscale=0.8]  
\coordinate (A) at (0,0);
\coordinate (B) at (3,0);
\coordinate (C) at (6, 0);
\coordinate (D) at (6, 1);
\coordinate (E) at (3, 1); 
\coordinate (F) at (3, 4);
\coordinate (G) at (0, 4);
\coordinate (H) at (0, 1);
\coordinate (I) at (3, 1.5) ;
\coordinate (J) at (0, 1.5) ;
\coordinate (K) at (3, 3);
\coordinate (L) at (0, 3) ;
\coordinate (M) at (2, 4);
\coordinate (N) at (2, 0);
\coordinate (O) at (4, 1);
\coordinate (P) at (4, 0);
\coordinate (Q) at (6, 1);
\coordinate (R) at (0, 1) ;
\coordinate (S) at (3, 2.5) ;
\coordinate (T) at (0, 2.5) ;
\coordinate (U) at (3, 4) ;
\coordinate (V) at (3, 0) ;
\coordinate (W) at (5, 1) ;
\coordinate (X) at (5, 0) ;
\coordinate (Y) at (6, 0.5) ;
\coordinate (Z) at (0, 0.5) ;
\coordinate (AA) at (3, 2) ;
\coordinate (BB) at (0 ,2) ;
\coordinate (CC) at (3, 3.5) ;
\coordinate (DD) at (0, 3.5) ;
\coordinate (EE) at (1, 4) ;
\coordinate (FF) at (1, 0) ;
\coordinate (GG) at (3, 1) ;
\fill[color=Frangipane] (A) -- (B) -- (C) -- (D) -- (E) -- (F) -- (G) -- (H) -- cycle;
\fill[color=Peach] (A) -- (I) -- (AA) -- (Z) -- cycle;
\fill[color=Peach] (J) -- (K) -- (CC) -- (BB) -- cycle;
\fill[color=Peach] (L) -- (M) -- (EE) -- (DD) -- cycle;
\fill[color=Peach] (N) -- (O) -- (GG) -- (FF) -- cycle;
\fill[color=Peach] (P) -- (Q) -- (W) -- (V) -- cycle;
\fill[color=Peach] (X) -- (Y) -- (C) -- cycle;
\draw[color=orange] (V) -- (W) ;
\draw[color=orange] (X) -- (Y) ;
\draw[color=orange] (Z) -- (AA) ;
\draw[color=orange] (BB) -- (CC) ;
\draw[color=orange] (DD) -- (EE) ;
\draw[color=orange] (FF) -- (GG) ;
\draw[color=brown] (A) -- (I) ;
\draw[color=brown] (J) -- (K) ;
\draw[color=brown] (L) -- (M) ;
\draw[color=brown] (N) -- (O) ;
\draw[color=brown] (P) -- (Q) ;
\draw[color=green] (A) -- (B) ;
\draw[color=black](B) -- (C) ;
\draw[color=blue] (C) -- (D) ;
\draw[color=black] (D) -- (E) ;
\draw[color=red] (E) -- (F);
\draw[color=green] (F) -- (G);
\draw[color=red] (G) -- (H);
\draw[color=blue] (H) -- (A);
\draw[color=violet] (R) -- (S) ;
\draw[color=violet] (T) -- (U) ;

\end{tikzpicture}
\caption{Cylinder decomposition of  $L(6, 1)$ with slope $s=1/2$.}
\label{bis6}
\end{figure}
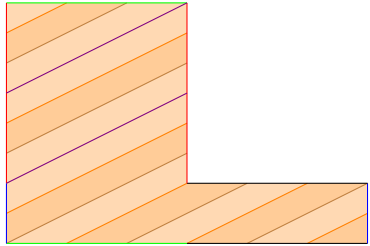
We leave it to the reader to check that this cylinder decomposition has two cylinders $C_1$ and $C_2$ and that both a twist in $C_1$ and $C_2$ send elements in $H_1(\Sigma, \mathbb Z/2\mathbb Z)$ to others that are not in their orbit under the action of $\langle H, V\rangle $.

An easier approach consists in remarking that, since $L(6, 1)$ is square-tiled, we may act by $\mathrm{SL}_2(\mathbb Z)$ on it to find elements in its Veech group directly.
We will work with $\mathcal O = \begin{pmatrix}
    \frac{1}{3} & 0\\
    0 & 1
\end{pmatrix}\cdot L(6, 1)$ instead of $L(6, 1)$ since it is then tiled by five unit squares.
We transport our basis of homology to $\mathcal O$ with this rescaling of the $x$-axis.
The permutations in $\mathfrak S_5$ defining $\mathcal O$ can be taken to be $\alpha = (1, 2)$ and $\beta = (2,3,4,5)$.
Let  $L = \begin{pmatrix}
    1 & 0\\
    1 & 1
\end{pmatrix}$ and $R = \begin{pmatrix}
    1 & 1\\
    0 & 1
\end{pmatrix}$.
The matrices $L$ and $R$ transform $(\alpha, \beta)$ respectively to $(\alpha\beta^{-1}, \beta)$ and $(\alpha, \beta \alpha^{-1})$, see \textit{e.g.} \cite[Section 2.3]{M22}.
This perspective allows us to find an element in the monodromy group modulo two that is not in $\langle H, V\rangle$.

\begin{lemma}\label{VeechL}
The matrix $M = L R^3 L$ belongs to the Veech group of $\mathcal O$ and the element in the monodromy group modulo two it induces does not leave $b_1$ invariant.
\end{lemma}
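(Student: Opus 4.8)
The plan is to first certify that $M = LR^3L$ lies in $\mathrm{SL}(\mathcal O)$ by letting the word act on the permutation pair $(\alpha,\beta) = ((1,2),(2,3,4,5))$ through the rules recalled above, and then to extract the action of the resulting affine automorphism on $b_1$ modulo two \emph{without} computing the full $4\times 4$ monodromy matrix, by exploiting that this automorphism descends to the base torus.

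First I would apply $L$, then $R$ three times, then $L$, updating the pair at each step by $(\alpha,\beta)\mapsto(\alpha\beta^{-1},\beta)$ for $L$ and $(\alpha,\beta)\mapsto(\alpha,\beta\alpha^{-1})$ for $R$. Carrying this out produces the pair $((1,3),(1,2,5,4))$, which is simultaneously conjugate to $(\alpha,\beta)$ by $\sigma = (1,3,2)(4,5)$. Hence $M = \begin{pmatrix} 4 & 3 \\ 5 & 4 \end{pmatrix}$ is the derivative of an affine automorphism $\phi_M$ of $\mathcal O$ that permutes the squares by $\sigma$, so $M\in\mathrm{SL}(\mathcal O)$ as claimed.

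For the homology statement I would use the covering $\pi\colon \mathcal O\to T$. Since $\phi_M$ has integral derivative $M\in\mathrm{SL}_2(\mathbb Z)$, it descends to the affine automorphism $z\mapsto Mz$ of $T$, so $\pi$ conjugates $\rho(\phi_M)$ to $M$ on $H_1(T,\mathbb Z/2) = (\mathbb Z/2)^2$, giving $\pi_*\circ\rho(\phi_M) = (M \bmod 2)\circ \pi_*$. The class $b_1$ is the vertical side of the top square, of length $\lambda = 3$, so its holonomy is $3i$ and $\pi_*(b_1) = (0,1)$. Modulo two one has $M \equiv \begin{pmatrix} 0 & 1 \\ 1 & 0 \end{pmatrix}$, hence $(M\bmod 2)\,\pi_*(b_1) = (1,0)$. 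If $\rho(\phi_M)$ fixed $b_1$ we would get $\pi_*(b_1) = (1,0)\neq(0,1)$, a contradiction; therefore $\rho(\phi_M)(b_1)\neq b_1$. Since both $H$ and $V$ fix $b_1$ modulo two, this places $\rho(\phi_M)$ outside $\langle H, V\rangle$.

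The hard part is not conceptual but bookkeeping: one must track the permutation word without sign or ordering errors and match conventions (the reading direction of the word, and whether $\pi_*$ intertwines $\rho(\phi_M)$ with $M$, $M^{-1}$ or $M^{\top}$). Here we are lucky, because $M$ reduces to the symmetric involution $\begin{pmatrix} 0 & 1 \\ 1 & 0 \end{pmatrix}$ modulo two, so all three conventions agree and the conclusion $(0,1)\mapsto(1,0)$ is insensitive to these choices. A more pedestrian alternative, should one want the explicit $4\times 4$ matrix, would be to transport the symplectic basis $(a_1,b_1,a_2,b_2)$ through $\phi_M$ using $M$ and $\sigma$; this is routine but longer and far more convention-sensitive, which is exactly why I prefer routing the argument through the toral quotient $\pi_*$.
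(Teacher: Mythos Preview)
Your proof is correct. The first half follows the paper exactly (you compose permutations right-to-left while the paper composes left-to-right, which is why you land on $((1,3),(1,2,5,4))$ rather than the paper's $((1,5),(1,4,3,2))$, but both pairs are simultaneously conjugate to $(\alpha,\beta)$, so the conclusion $M\in\mathrm{SL}(\mathcal O)$ is the same).

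For the second half you take a genuinely different route. The paper invokes \cref{veech_cover}: it passes to the double cover $\tilde{\mathcal O}$ of $\mathcal O$ associated with $b_1$, writes down its defining permutations in $\mathfrak S_{10}$, applies $LR^3L$ to them, and checks that the resulting pair is not conjugate to the original, so $M\notin\mathrm{SL}(\tilde{\mathcal O})$ and hence $M$ does not stabilise $b_1$. You instead push $b_1$ \emph{down} to the torus via $\pi_*$ (equivalently the period map) and use the equivariance $\pi_*\circ\rho(\phi_M)=M\circ\pi_*$ on $H_1(\cdot,\mathbb Z/2\mathbb Z)$; since $\pi_*(b_1)=(0,3)\equiv(0,1)$ and $M\bmod 2=\begin{psmallmatrix}0&1\\1&0\end{psmallmatrix}$ swaps $(0,1)$ and $(1,0)$, the class $b_1$ cannot be fixed. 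Your argument is shorter, needs no computation in $\mathfrak S_{10}$, and bypasses \cref{veech_cover}; it exploits the same identity $\mathrm{Per}(\rho(f)\gamma)=d(f)\cdot\mathrm{Per}(\gamma)$ that the paper uses for the decagon. The paper's approach, on the other hand, stays within the cover/affine-group framework that organises the rest of the article, and would still work in situations where the period map modulo two happens to kill the relevant class.
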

In \cref{VeechL}, we can talk about \emph{the} element of the monodromy group induced by $M$ since $\mathcal O$ does not have any translation, hence $\mathrm{SL}(\mathcal O) = \mathrm{Aff}^+(\mathcal O)$.
This is actually the case for every square-tiled surface in $\mathcal H(2g-2)$, see for example \cite[Proposition 39]{M22}. Concretely the matrix $M$ is
\[M = \begin{pmatrix}
    4 & 3\\
    5 & 4
\end{pmatrix}.
\]
\begin{proof}
Let us first show that $LR^3L$ is in the Veech group of $\mathcal O$.
The matrix $L$ changes $(\alpha, \beta)$ to $(\gamma, \beta)$ where $\gamma = \alpha\beta^{-1} = (1\ 5\ 4\ 3\ 2)$. Then $R^3$ sends $(\gamma, \beta)$ to $(\gamma, \delta)$ where $\delta = \beta\gamma^{-3} = (1\ 4\ 3\ 2)$. Finally $L$ sends $(\gamma, \delta)$ to $(\epsilon, \delta)$ where $\epsilon = \gamma\delta^{-1} = (1 \ 5)$.
The pair $(\epsilon, \delta)$ is conjugated to $(\alpha, \beta)$ by $(1\ 2\ 5)(3\ 4)$. Therefore $M\in \mathrm{SL}(\mathcal O)$.
In order to show that $b_1$ is not stabilised by $M$, it suffices by \cref{veech_cover} to show that the cover $\tilde {\mathcal O}$ of $\mathcal O$ associated with $b_1$ does not contain $M$ in its Veech group. The cover associated with $b_1$ has the form of the right side of \cref{twoExamples}.
The permutations defining it can be taken to be $\alpha' = (1\ 2)(6\ 7)(3\ 8)(4\ 9)(5\ 10)$ and $\beta' = (2\ 3\ 4\ 5)(7\ 8\ 9\ 10)$. 
Applying the same transformations as before, one obtains
\[(\epsilon', \delta') = \left ((1\ 10)(5\ 6), (1\ 4\ 8\ 2)(3\ 7\ 6\ 9)(5\ 10)\right ).\]
The pair $(\epsilon', \delta')$ is obviously not conjugated to $(\alpha', \beta')$ thus $M\notin \mathrm{SL}(\tilde{\mathcal O})$.
\end{proof}
This completes the proof of \cref{D1}.
\section{Monodromy of the decagon}\label{sectionDec}

In \cite{MCM06}, McMullen established that the regular decagon $D$ is the only abelian differential in $\mathcal H(1, 1)$ that generates a primitive Teichmüller curve, up to the action of $\mathrm{GL}_2^+(\mathbb R)$. Let us fix an identification $\Sigma\simeq D$.
In this section we compute the monodromy group of $D$, that is the image of the monodromy map
\[\rho\colon \mathrm{Aff}^+(D)\to \mathrm{Sp}(H_1(\Sigma, \mathbb Z)).\]
Since the translation group of the decagon $\mathrm{Tr}(D)$ is trivial, we identify the affine group with the Veech group $\mathrm{SL}(D)$ of $D$. The seminal article of Veech \cite{V89} describes the Veech groups of regular polygons.
In order to recall this description in our case, let us denote by $R$ the matrix of the rotation of angle $\frac{\pi}{5}$, and by $T$ the following matrix
\[
T = \begin{pmatrix}
    1 & 2\cot \frac{\pi}{10}\\
    0 & 1
\end{pmatrix} = \begin{pmatrix}
    1 & 2\sqrt{5 + 2\sqrt 5}\\
    0 & 1
\end{pmatrix}.
\]
It follows from \cite[Theorem 1.1]{V89} that the Veech group of the decagon is generated by the matrices $R$ and $T$.
Let us fix a basis $(a_1, b_1, a_2, b_2)$ of $H_1(\Sigma, \mathbb Z)$, as pictured in \cref{FigDecagone}, where the curves are oriented from left to right.
\begin{figure}[h!]
\begin{tikzpicture}[>=stealth,yscale=1.8,xscale=1.8]  
\coordinate (A) at (1,0);
\coordinate (B) at (0.8090,0.5878);
\coordinate (C) at (0.3090, 0.9511);
\coordinate (D) at (-0.3090, 0.9511);
\coordinate (E) at (-0.8090, 0.5878); 
\coordinate (F) at (-1, 0);
\coordinate (G) at (-0.8090, -0.5878);
\coordinate (H) at (-0.3090, -0.9511);
\coordinate (I) at (0.3090, -0.9511) ;
\coordinate (J) at (0.8090, -0.5878) ;
\fill[color=Frangipane] (A) -- (B) -- (C) -- (D) -- (E) -- (F) -- (G)  -- (H) -- (I) -- (J) -- cycle;
\draw[color=red] (A) -- (B) ;
\draw[color=blue] (B) -- (C) ;
\draw[color= green] (C) -- (D) ;
\draw[color = yellow] (D) -- (E) ;
\draw[color = brown] (E) -- (F) ;
\draw[color = red] (F) -- (G) ;
\draw[color = blue] (G) -- (H) ;
\draw[color = green] (H) -- (I);
\draw[color = yellow] (I) -- (J) ;
\draw[color = brown] (J) -- (A) ;
\coordinate (K) at (0.9045, 0.2939) ;
\coordinate (L) at (-0.9045, -0.2939) ;
\draw[color = red] (K) -- (L) ;
\coordinate (M) at (-0.9045, 0.2939);
\coordinate (N) at (0.9045, -0.2949);
\draw[color = blue] (M) -- (N) ;
\draw[color = red] (E) -- (C) ;
\draw[color = blue] (D) -- (B) ;
\node at (-0.603, -0.35) {$a_1$};
\node at (-0.603, 0.05) {$b_1$};
\node at (-0.603, 0.50) {$a_2$};
\node at (0.25, 0.61) {$b_2$} ;
\end{tikzpicture}
\caption{The decagon $D$.}
\label{FigDecagone}
\end{figure}
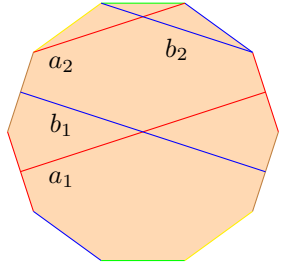

Before computing the monodromy of $D$, let us prove \cref{lemmaNoAuto}.
\begin{proof}
There is only one saddle connection on $D$ with period $2$.
There are thus two saddle connections on any cover of degree two $\tilde D$ with period $2$.
Any translation must either fix them, or interchange them.
The translation group of $\tilde D$ is thus of order two.
\end{proof}
This finishes the proof of \cref{corEchoes}. We now turn to the computation of the monodromy.
Let $\xi = e^{i\pi/5}$, and $\alpha_1, \beta_1, \alpha_2, \beta_2$ be the periods of $a_1, b_1, a_2, b_2$, respectively.
We can compute these numbers directly:
\begin{align*}
    \alpha_1 &= \xi + 1 = \frac{5 + \sqrt{5}}{4} + i\sqrt{\frac{5 - \sqrt 5}{8}}\\
    \beta_1 &= \overline{\alpha_1} = \frac{5 + \sqrt 5}{4} - i\sqrt{\frac{5 - \sqrt 5} 8}\\
    \alpha_2 &=  \xi^2 - \xi^4 = \frac {\sqrt{5}} 2 + i\frac{\sqrt{5 - 2\sqrt 5}} 2\\
    \beta_2 &= \xi - \xi^{3} = \frac{\sqrt 5}{2} - i\frac{\sqrt{5 - 2\sqrt 5}}{2}.
\end{align*}
The numbers $\alpha_1$, $\beta_1$, $\alpha_2$, $\beta_2$ are linearly independent.
\begin{lemma}\label{perInjective}
The period map $\mathrm{Per}\colon H_1(\Sigma, \mathbb Z)\to \C$ is injective.
\end{lemma}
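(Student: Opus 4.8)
The plan is to reduce the injectivity of $\mathrm{Per}$ to a finite linear-algebra computation over $\mathbb{Q}$. Since $(a_1, b_1, a_2, b_2)$ is a basis of the free abelian group $H_1(\Sigma, \mathbb{Z})$, a class $\gamma = n_1 a_1 + n_2 b_1 + n_3 a_2 + n_4 b_2$ with $n_i \in \mathbb{Z}$ lies in the kernel of $\mathrm{Per}$ exactly when $n_1\alpha_1 + n_2\beta_1 + n_3\alpha_2 + n_4\beta_2 = 0$. Hence $\mathrm{Per}$ is injective if and only if $\alpha_1, \beta_1, \alpha_2, \beta_2$ are linearly independent over $\mathbb{Z}$, which, after clearing denominators, is the same as independence over $\mathbb{Q}$. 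It therefore suffices to establish the linear independence asserted just before the statement; note that this must be read over $\mathbb{Q}$, since four complex numbers can never be independent over $\C$.

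The key observation is that all four periods lie in the cyclotomic field $\mathbb{Q}(\xi)$, where $\xi = e^{i\pi/5}$ is a primitive tenth root of unity. This field has degree four over $\mathbb{Q}$, with minimal polynomial $\Phi_{10}(x) = x^4 - x^3 + x^2 - x + 1$, so that $\{1, \xi, \xi^2, \xi^3\}$ is a $\mathbb{Q}$-basis and $\xi^4 = \xi^3 - \xi^2 + \xi - 1$. I would then rewrite each period in this basis. Using $\xi^5 = -1$ one finds $\overline{\xi} = \xi^{-1} = -\xi^4 = 1 - \xi + \xi^2 - \xi^3$, and substituting this together with the expression for $\xi^4$ gives
\[
\alpha_1 = 1 + \xi,\qquad \beta_1 = 2 - \xi + \xi^2 - \xi^3,\qquad \alpha_2 = 1 - \xi + 2\xi^2 - \xi^3,\qquad \beta_2 = \xi - \xi^3.
\]

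Linear independence over $\mathbb{Q}$ of these four elements of $\mathbb{Q}(\xi)$ is equivalent to invertibility of the matrix of their coordinates in the basis $\{1, \xi, \xi^2, \xi^3\}$, namely
\[
\begin{pmatrix} 1 & 1 & 0 & 0 \\ 2 & -1 & 1 & -1 \\ 1 & -1 & 2 & -1 \\ 0 & 1 & 0 & -1 \end{pmatrix}.
\]
A short computation (subtracting multiples of the first row to clear the first column, then expanding the resulting $3\times 3$ minor) gives determinant $5 \neq 0$. Thus the periods are linearly independent over $\mathbb{Q}$, and the lemma follows.

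There is no serious conceptual obstacle: the argument is a direct computation inside a quartic number field. The only delicate point is the reduction of $\beta_1$ and $\alpha_2$ modulo the cyclotomic relation $\Phi_{10}(\xi)=0$ and the attendant sign bookkeeping, where an arithmetic slip would corrupt the determinant. As an independent sanity check one could instead apply the four field embeddings $\xi \mapsto \xi^k$ for $k \in \{1,3,7,9\}$ and verify that the resulting $4\times 4$ matrix of conjugate periods is nonsingular, which is the Galois-theoretic shadow of the same determinant.
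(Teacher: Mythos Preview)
Your argument is correct. Your reduction of the cyclotomic expressions modulo $\Phi_{10}$ is accurate, and the determinant is indeed $5$.

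Your route differs from the paper's. The paper separates real and imaginary parts using the radical expressions for the $\alpha_i,\beta_i$: from the real part it extracts $x_1+y_1=0$ and $x_2+y_2=0$ (using that $\tfrac{5+\sqrt5}{4}$ and $\tfrac{\sqrt5}{2}$ are $\mathbb{Q}$-independent), and then the imaginary part forces $x_1^2(5-\sqrt5)=2x_2^2(5-2\sqrt5)$, which has only the trivial integer solution. Your approach stays inside $\mathbb{Q}(\xi)$, expresses the four periods in the power basis $\{1,\xi,\xi^2,\xi^3\}$, and checks invertibility of the coordinate matrix. The paper's argument is shorter and uses nothing beyond the irrationality of $\sqrt5$, while yours is more structural: it makes transparent why four periods can be independent (the trace field is quartic) and generalises cleanly to other regular $2n$-gons by the same determinant computation in $\mathbb{Q}(e^{i\pi/n})$. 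Your closing remark about the Galois embeddings is exactly the conceptual reason this works, and ties in with the algebraic-primitivity observation the paper records in the remark following the lemma.
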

\begin{proof}
    Suppose $(x_1, y_1, x_2, y_2)\in \mathbb Z^4$ is such that 
    \[x_1\alpha_1 + y_1\beta_1 + x_2\alpha_2 + y_2\beta_2 = 0.\]
The real part gives $x_1 + y_1 = 0$ and $x_2 + y_2 = 0$. The imaginary part then gives
\[x_1^2(5 - \sqrt{5}) = 2x_2^2(5 - 2\sqrt 5).\]
Therefore $x_1 = x_2 = 0$ and $y_1=y_2 = 0$.
\end{proof}
\begin{rmk}
\cref{perInjective} also follows from the fact that $\mathcal D$ is algebraically primitive, see \cite[Lemma 3.2]{FL23}.
\end{rmk}
This will allow us to derive the monodromy representation from the action of the Veech group on periods. Indeed, the Veech group $\mathrm{SL}(D)$ acts on $\C = \mathbb R^2$, and on periods since for $A\in \mathrm{SL}(D)$ and $\gamma\in H_1(\Sigma, \mathbb Z)$,
\[A\cdot \mathrm{Per}(\gamma) = \mathrm{Per}(\rho(A)\cdot \gamma).
\]
Since $R$ is a rotation of angle $\frac{\pi} 5$, we have $R\beta_1 = \xi\beta_1 = \alpha_1$, and $R\beta_2 = \xi \beta_2 = \alpha_2$.
Hence $\rho(R)$ sends $b_i$ to $a_i$ for $1\leqslant i \leqslant 2$.
From $\xi^4 = -\xi^{-1}$, we get
\[
R\alpha_1 = \xi (\xi + 1) = \xi^2 + \xi = (\xi + 1) - (\xi^{-1} + 1) + (\xi^2 - \xi^4).
\]
Therefore $R\alpha_1 = \alpha_1 - \beta_1 + \alpha_2$.
Similarly, 
\[
R\alpha_2 = \xi(\xi^2 - \xi^4) = \xi^3 +1 = (\xi + 1) - (\xi - \xi^3).
\]
Thus $R\alpha_2 = \alpha_1 - \beta_2$. We can now write the matrix of $\rho(R)$ in the basis $(a_1, b_1, a_2, b_2)$
\[
\rho(R) = 
\begin{pmatrix}
    1 && 1 && 1 && 0\\
    -1 && 0 && 0 && 0\\
    1 && 0 && 0 && 1\\
    0 && 0 && -1 && 0
\end{pmatrix}.
\]
Let us now turn to the computation of $\rho(T)$.
Straightforward computations give

\begin{align*}
T\alpha_1 &= 3\cdot\frac{5+\sqrt 5}{4} + i\sqrt{\frac{{5 - \sqrt 5}}{8}}\\
T\beta_1 &= -\frac{5 + \sqrt 5}{4} - i\sqrt{\frac{5 - \sqrt 5}{8}}\\
T\alpha_2 &= 3 \frac{\sqrt 5}{2}+ i\frac{\sqrt{5 - 2\sqrt 5}}{2}\\
T\beta_2 &= - \frac{\sqrt 5}{2} - \frac{\sqrt{5 - \sqrt 5}}{2}.
\end{align*}
Therefore, we have $T\alpha_1 = 2\alpha_1 + \beta_1$, $T\beta_1 = -\alpha_1$, $T\alpha_2 = 2\alpha_2 + \beta_2$, and $T\beta_2 = -\alpha_2$.
We now have the image of $T$ by the monodromy map:
\[
\rho(T) = 
\begin{pmatrix}
    2 && -1 && 0 && 0\\
    1 && 0 && 0 && 0\\
    0 && 0 && 2 && -1\\
    0 && 0 && 1 && 0
\end{pmatrix}.
\]
Reducing modulo two, we easily see that the monodromy group modulo two of $D$ is dihedral of order ten.
One could alternatively use results of \cite{MCM23_2} to describe this action.
We are now set to prove \cref{thDecagone}: $\mathcal D$ has three echoes in $\mathcal M_3$.
\begin{proof}
By \cref{corEchoes}, it suffices to describe the action of the monodromy group on $H_1(\Sigma, \mathbb Z/2\mathbb Z)$.
We form the graph whose vertices are non-zero elements of $H_1(\Sigma, \mathbb Z/2\mathbb Z)$, with an edge between each vertex $v$ and $Mv$ for $M\in\{\rho(R), \rho(T)\}$. We verify that this graph has three connected components, each of size five. 
\end{proof}
Note that each of the three echoes is a cover of $\mathcal D$ of degree five.
We deduce from this analysis \cref{corRelDec}.
\begin{proof}
Let $(X, \omega)$ be a genus three Veech surface covering $(X_0, \omega_0)\in \mathcal H(1, 1)$. If $(X, \omega)$ is not covering a surface of genus one, then $\mathrm{SL}(X, \omega)$ is a subgroup of $\mathrm{SL}(X_0, \omega_0)$, by \cite[Corollary 2.2]{MCMPrym}. Thus $(X_0, \omega_0)$ is a Veech surface in $\mathcal H(1, 1)$, generating a primitive Teichm\"uller curve. It follows from \cite{MCM06} that $(X_0, \omega_0)$ is, after rescaling, in the $\mathrm{SL}_2(\mathbb R)$-orbit of $D$.
Therefore $(X, \omega)$ belongs to one of the three Teichm\"uller discs generated by lifts of $D$.
\end{proof}
\subsubsection{Higher genera}
Our techniques can be applied to study echoes of $\mathcal D$ in higher genera moduli spaces. One can for example replace degree two covers by cyclic covers of a given degree $n$, that is covers associated with a subgroup of the form $\ker(\pi_1(\Sigma)\twoheadrightarrow \mathbb Z/n\mathbb Z)$.
Let us call the \emph{cyclic} echoes of $\mathcal D$ the Teichm\"uller curves obtained by pulling back $D$ with these covers.
The cyclic echoes of $\mathcal D$ lie in $\mathcal M_{n+1}$. 
We can mimic the proof of \cref{lemmaNoAuto}, and get that the echoes are still classified by the action of the affine group on the covers as in \cref{corEchoes}, see also \cref{transG} in \cref{sectionHigher}.
We compute the graphs as before, where this time the vertices are the primitive elements of $H_1(\Sigma, \mathbb Z/n\mathbb Z)$. 
We can write a program that counts the connected components of these graphs, and get the number $N(n)$ of cyclic echoes of $\mathcal D$ in $\mathcal M_{n+1}$ for small values of $n$, see \cref{tableDec}.
\begin{center}    \begin{table}[!h]
\begin{tabular}{|c ||c | c | c|c|c| c| c|c|c|c|c|c|c|c|}
    \hline
     $n = g-1$ & $2$ & $3$ & $4$ & $5$  & $6$ & $7$ & $8$ & $9$ & $10$ & $11$ & $12$ & $13$ & $14$ & $15$\\
     \hline
     $N(n)$ & $3$ & $1$ & $3$ & $8$ & $3$ & $1$ & $3$ & $1$ & $24$ & $3$ & $3$ & $1$ & $3$ & $8$\\
     \hline
\end{tabular}
\caption{The numbers of cyclic echoes of $\mathcal D$ in $\mathcal M_g$.}
\label{tableDec}
\end{table}
\end{center}
\section{Real multiplication and monodromy groups}\label{sectionReal}
In this section, we prove \cref{commutationReal}. 
We first give a general proof, and then for the specific case of Weierstrass curves with non-square discriminant, we indicate an alternative and more direct approach.
\subsection{Theoretical approach}
\subsubsection{Commutation with real multiplication}
Let us fix $X\in \mathcal T(\Sigma)$ a complex structure on $\Sigma$.
We identify $\Omega(X)$ with $H^1(\Sigma, \mathbb R)$ by the map that sends $\alpha$ to $\mathrm{Re}(\int \alpha)$.
Recall that $[\alpha, \beta]\mapsto \frac{i}{2}\int \alpha\wedge \bar \beta$ is an inner product on $\Omega(X)$.
An endomorphism $T\colon H_1(\Sigma, \mathbb Z)\to H_1(\Sigma, \mathbb Z)$ extends to an $\mathbb R$-linear endomorphism of $H_1(\Sigma, \mathbb R)$.
Its dual $T^\vee\colon \Omega(X)\to\Omega(X)$ is a real linear endomorphism.
When $T^\vee$ is complex linear and self-adjoint, we can diagonalise it in an orthogonal basis.
\begin{lemma}\label{determinedPer}
Let $(X, \omega)\in \Omega \mathcal T(\Sigma)$, and  $T$ be a complex linear self-adjoint endomorphism of $H_1(\Sigma, \mathbb Z)$ that admits $\omega$ as eigenvector.
Its dual $T^\vee\colon \Omega(X)\to \Omega(X)$ is diagonal in any orthogonal basis of the form $(\omega, \omega')$.
\end{lemma}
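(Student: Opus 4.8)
The plan is to reduce the statement to the elementary fact that a self-adjoint operator on a Hermitian vector space preserves the orthogonal complement of any invariant subspace, and then to exploit that $\Omega(X)$ is only two-dimensional. Since $\Sigma$ has genus two, $\Omega(X)$ is two-dimensional over $\C$, and by hypothesis $T^\vee$ is complex linear and self-adjoint for the inner product $[\alpha,\beta]=\frac i2\int\alpha\wedge\bar\beta$, with $\omega$ an eigenvector: write $T^\vee\omega=\lambda\omega$ for some $\lambda\in\C$.

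First I would note that the line $\C\omega$ is $T^\vee$-invariant, which is immediate. Then I would check that its orthogonal complement is invariant as well: for any $v$ with $[v,\omega]=0$, self-adjointness together with the conjugate-linearity of $[\cdot,\cdot]$ in the second variable gives $[T^\vee v,\omega]=[v,T^\vee\omega]=\bar\lambda[v,\omega]=0$, so $T^\vee v$ again lies in $(\C\omega)^\perp$. Hence $(\C\omega)^\perp$ is $T^\vee$-invariant.

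Because $\dim_\C\Omega(X)=2$, the complement $(\C\omega)^\perp$ is a single line, and any $\omega'$ completing $\omega$ to an orthogonal basis $(\omega,\omega')$ spans it. The invariance just established forces $T^\vee\omega'\in\C\omega'$, so $\omega'$ is an eigenvector too, and $T^\vee$ is therefore diagonal in the basis $(\omega,\omega')$. I do not anticipate a genuine obstacle here; the only points demanding care are tracking the conjugation so that the eigenvalue appears as $\bar\lambda$ (harmless for the vanishing but needed for correctness), and recalling that the hypothesis phrased for $T$ on $H_1(\Sigma,\Z)$ transfers to $T^\vee$ being complex linear and self-adjoint for the Hermitian form, which is precisely the translation recorded immediately before the statement.
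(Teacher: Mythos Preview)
Your argument is correct and is essentially the same as the paper's: the paper's one-line proof simply records that $T^\vee$ is self-adjoint and hence preserves $\omega^\perp=\C\omega'$, which is exactly the invariance-of-the-orthogonal-complement step you spell out. The only difference is that you make explicit the use of $\dim_\C\Omega(X)=2$ and the computation $[T^\vee v,\omega]=\bar\lambda[v,\omega]$, which the paper leaves implicit.
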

\begin{proof}
Since $T$ is self-adjoint, $T^\vee$ also is, and preserves $\omega^\perp = \mathbb C \omega'$.
\end{proof}
Let us verify that an element $f\in \mathrm{Aff}^+(X, \omega)$ preserves $\mathbb C\omega$ and its orthogonal.
\begin{lemma}\label{preserves}
Let $(X, \omega)\in \Omega\mathcal T(\Sigma)$ and $f\in \mathrm{Aff}^+(X, \omega)$.
The endomorphism $\rho(f)^\vee\colon \Omega(X)\to \Omega(X)$ preserves both $\mathbb C \omega$ and its orthogonal.
\end{lemma}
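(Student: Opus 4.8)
The plan is to reduce the statement to a purely symplectic one, exploiting that the monodromy $\rho(f)$ lands in $\Sp(H_1(\Sigma, \Z))$ and hence its dual preserves the cup product, even though $f$ is \emph{not} holomorphic and therefore does not preserve the Hodge inner product $[\cdot,\cdot]$. Throughout I identify $\Omega(X)$ with $H^1(\Sigma, \R)$ via the map $\Psi$ of the statement (sending $\alpha$ to $\mathrm{Re}(\int\alpha)$), so that $\rho(f)^\vee$ becomes the pullback $f^*$ on $H^1(\Sigma, \R)$, and I write $P_\omega\subset H^1(\Sigma,\R)$ for the image $\Psi(\mathbb C\omega)$, that is the real $2$-plane spanned by $\mathrm{Re}[\omega]$ and $\mathrm{Im}[\omega]$.

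First I would show that $\rho(f)^\vee$ preserves $\mathbb C\omega$, equivalently that $f^*$ preserves $P_\omega$. This is where affineness enters: in the flat charts of $(X,\omega)$ one has $\mathrm{Re}\,\omega = dx$ and $\mathrm{Im}\,\omega = dy$, and $f$ acts in these charts by its (globally constant) derivative $A = d(f)\in \SL\R$. Hence $\mathrm{Re}(f^*\omega)$ and $\mathrm{Im}(f^*\omega)$ are constant linear combinations of $\mathrm{Re}\,\omega$ and $\mathrm{Im}\,\omega$ with coefficients read off from $A$; passing to cohomology classes this shows $f^*$ maps $P_\omega$ into itself (in fact it acts on $P_\omega$ by a matrix conjugate to $A$), so $\rho(f)^\vee(\mathbb C\omega) = \mathbb C\omega$.

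The main obstacle is the orthogonal complement, precisely because $f^*$ need not preserve $[\cdot,\cdot]$. The key step is to reinterpret the Hodge-orthogonal of $\mathbb C\omega$ as the \emph{symplectic} orthogonal of $P_\omega$ for the cup product $\langle a, b\rangle = \int_\Sigma a\wedge b$. Concretely, for $\beta\in\Omega(X)$ one expands $\int_\Sigma \Psi(\omega)\wedge\Psi(\beta)$ and $\int_\Sigma \Psi(i\omega)\wedge\Psi(\beta)$; since $\omega\wedge\beta$ and $\bar\omega\wedge\bar\beta$ vanish on a curve (they have type $(2,0)$ and $(0,2)$), only the mixed terms survive, and one finds, up to a fixed nonzero real constant, $\int_\Sigma \Psi(\omega)\wedge\Psi(\beta) = \mathrm{Im}[\omega,\beta]$ and $\int_\Sigma \Psi(i\omega)\wedge\Psi(\beta) = \mathrm{Re}[\omega,\beta]$. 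Therefore $[\omega,\beta]=0$ if and only if $\Psi(\beta)$ is symplectically orthogonal to both $\Psi(\omega)$ and $\Psi(i\omega)$, i.e. $\Psi\big((\mathbb C\omega)^\perp\big) = P_\omega^{\perp}$, the symplectic orthogonal of $P_\omega$.

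To conclude, recall $\rho(f)\in\Sp(H_1(\Sigma,\Z))$, so $f^*$ preserves the cup product on $H^1(\Sigma,\R)$ (equivalently, $f$ is orientation-preserving). Since $f^*$ preserves $P_\omega$ by the first step, it preserves its symplectic orthogonal $P_\omega^{\perp}$, and hence preserves $(\mathbb C\omega)^\perp$ by the identification of the previous step; this yields both assertions. I expect the only genuinely delicate point to be this symplectic reinterpretation: one must resist using that $f^*$ is a Hodge isometry, which it is not, and instead route the orthogonality through the $f^*$-invariant cup product.
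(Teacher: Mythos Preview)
Your proof is correct and follows essentially the same approach as the paper's: both use the affine action to show $\rho(f)^\vee$ preserves $\mathbb C\omega$, and then invoke the symplecticity of $\rho(f)$ to handle the orthogonal complement. The paper compresses your careful identification of the Hodge-orthogonal of $\mathbb C\omega$ with the symplectic-orthogonal of $P_\omega$ into the single line ``since $\rho(f)$ is symplectic, the adjoint of $\rho(f)^\vee$ is its inverse, and $\rho(f)^\vee$ preserves $\omega^\perp$,'' which your unpacking is precisely what justifies.
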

\begin{proof}
Under our identification, $\mathbb C\omega = \mathbb R \mathrm{Re}(\int \omega) + \mathbb R\mathrm{Im}(\int \omega)$.
The endomorphism $\rho(f)^\vee\colon \Omega(X)\to \Omega(X)$ sends $\mathrm{Re} \int_\cdot \alpha$ to $\mathrm{Re}\int_{\rho(f)\cdot} \alpha$.
Since $f\in \mathrm{Aff}^+(X, \omega)$, there exists $A\in \mathrm{SL}_2(\mathbb R)$ such that $\int_{\rho(f)\cdot} \omega= A\int_\cdot \omega$.
In particular for $z\in \mathbb C$,
\[\rho(f)^\vee (z\omega) = \mathrm{Re}\left ( zA\int \omega\right )\in \mathbb R \mathrm{Re}(\int \omega) + \mathbb R \mathrm{Im}(\int \omega).\]
Therefore $\rho(f)^\vee$ preserves $\mathbb C\omega$.
Since $\rho(f)$ is symplectic, the adjoint of $\rho(f)^\vee$ is its inverse, and $\rho(f)^\vee$ preserves $\omega^\perp$.
\end{proof}

We are now able to prove \cref{commutationReal}. 
Let us give a slightly more general statement.
\begin{prop}\label{thCom}
Let $(X, \omega)\in \Omega\mathcal T(\Sigma)$ and $f\in \mathrm{Aff}^+(X, \omega)$. Any self-adjoint complex linear endomorphism $T\colon H_1(\Sigma, \mathbb Z)\to H_1(\Sigma, \mathbb Z)$ with real eigenvalues such that $\omega$ is an eigenvector of $T^\vee$ commutes with $\rho(f)$.
\end{prop}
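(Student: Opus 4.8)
The plan is to pass to the dual action on $\Omega(X)$, show there that $T^\vee$ and $\rho(f)^\vee$ commute, and then dualize back to $H_1(\Sigma,\R)$. Since $\Sigma$ has genus two, $\Omega(X)$ is a two-dimensional complex vector space, and I would work throughout with the orthogonal splitting $\Omega(X) = \C\omega \oplus \omega^\perp$, writing $\omega'$ for a generator of the line $\omega^\perp$.

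First I would record that both endomorphisms respect this splitting. The map $\rho(f)^\vee$ preserves $\C\omega$ and $\omega^\perp$ by \cref{preserves}, while \cref{determinedPer} shows that $T^\vee$ is diagonal in the basis $(\omega, \omega')$, so it too preserves each line. Thus it suffices to compare the restrictions of $T^\vee$ and $\rho(f)^\vee$ on each of the two complex lines separately.

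The crux is that $T^\vee$ acts by a \emph{real} scalar on each line. By \cref{determinedPer} we have $T^\vee \omega = \lambda \omega$ and $T^\vee \omega' = \mu \omega'$ for some $\lambda, \mu \in \C$. Regarding $T^\vee$ as the transpose of the $\R$-linear extension of $T$, its real spectrum equals that of $T$; but as a $\C$-linear map the numbers $\lambda, \mu$ contribute $\lambda, \bar\lambda, \mu, \bar\mu$ to this real spectrum. Since $T$ is assumed to have real eigenvalues, $\lambda = \bar\lambda$ and $\mu = \bar\mu$, whence $\lambda, \mu \in \R$. A real scalar lies in the centre of the endomorphism algebra of a complex line, so it commutes with the restriction of $\rho(f)^\vee$ to each summand. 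Therefore $T^\vee \circ \rho(f)^\vee = \rho(f)^\vee \circ T^\vee$ on all of $\Omega(X)$.

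Finally I would dualize. Under the identification of $\Omega(X)$ with $H^1(\Sigma, \R)$, the operators $T^\vee$ and $\rho(f)^\vee$ are the transposes of the $\R$-linear extensions of $T$ and $\rho(f)$; as transposition reverses composition, the identity $T^\vee \rho(f)^\vee = \rho(f)^\vee T^\vee$ yields $\rho(f) T = T \rho(f)$ on $H_1(\Sigma, \R)$, and hence on the integral lattice $H_1(\Sigma, \Z)$. I expect the only delicate point to be the reality of the scalars $\lambda, \mu$: this is exactly where the hypothesis of real eigenvalues is used, and it is what rules out $T^\vee$ acting by a genuinely complex (rotational) scalar on a line, which would fail to commute with $\rho(f)^\vee$. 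The degenerate case $\lambda = \mu$, where $T^\vee$ is already a global scalar and commutation is immediate, causes no difficulty.
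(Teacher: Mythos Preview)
Your proof is correct and follows essentially the same route as the paper: both use the orthogonal splitting $\Omega(X)=\C\omega\oplus\omega^\perp$ via \cref{determinedPer} and \cref{preserves}, and both reduce to the observation that $T^\vee$ acts by a \emph{real} scalar on each line, which therefore commutes with the $\R$-linear restriction of $\rho(f)^\vee$. The only cosmetic difference is that the paper phrases the conclusion as $\rho(f)^{-1}T\rho(f)=T$ by evaluating duals on $\omega$ and $\omega'$, whereas you argue commutation directly and spell out (via the real spectrum of $T$) why $\lambda,\mu\in\R$, a point the paper leaves implicit.
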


\begin{proof}
Let $S = \rho(f)^{-1} T \rho(f)$.
Let us pick $(\omega, \omega')$ an orthogonal basis of $\Omega(X)$.
The spaces $\mathbb C\omega$ and $\mathbb C\omega'$ are composed of eigenvectors of $T$ by \cref{determinedPer}.
It follows from \cref{preserves} that $\rho(f)^\vee$ preserves both of them.
Therefore if $T^\vee(\omega) = \lambda \omega$ and $\rho(f)^{-1}(\omega) = \mu \omega$, shortening $S^\vee$ and $T^\vee$ by $S$ and $T$, we have
\[S(\omega) = \rho(f) T\rho (f)^{-1} (\omega) = \rho(f) T(\mu \omega) = \lambda \rho(f) (\mu \omega) = \lambda \omega.
\]
Note that we used that $\lambda$ is real.
Similarly, $S^\vee(\omega') = T^\vee(\omega')$.
Therefore $S = T$.
\end{proof}

\subsubsection{Real multiplication on $L$}\label{RealMulitplication}
Recall that $W_D\subset \mathcal M_2$ is the locus of Riemann surfaces whose Jacobian admits real multiplication by an order $\mathcal O_D$, and an eigenform $\omega$ with a single zero.
Let us find $T$ a generator of $\mathcal O_D$ for $L(b, e)$ the surface \cref{Lshaped} of \cref{explicit}.
We fix again $\lambda = \frac{e + \sqrt{e^2 + 4b}}{2}$, and considering the periods, we let
\[
T = \begin{pmatrix}
    e && 0 && b && 0\\
    0 && e && 0 && 1\\
    1 && 0 && 0 && 0\\
    0 && b && 0 && 0\\
\end{pmatrix}.
\]
We have $T^2 = eT + b$, and $T$ is self-adjoint.
It is actually a generator of multiplication by $\mathcal O_D$. Indeed, let us recall an argument from \cite[Theorem 8.3]{MCM07}. 
A self-adjoint endomorphism has the following form 
\[S = \begin{pmatrix}
 \epsilon & 0 & \delta & -\beta\\
 0 & \epsilon & -\gamma & \alpha\\
 \alpha & \beta & \eta & 0\\
 \gamma & \delta & 0 & \eta
\end{pmatrix}.\]
If $S$ is a generator of $\mathcal O_D$, we can replace it with $S - \eta I$ and suppose that $\eta = 0$.
Since $S^\vee(\omega) = \lambda\omega$, considering periods, we get $S = T$.
Therefore the monodromy group of $L$ commutes with $T$.
The matrix $T$ is also explicited in \cite[Section 5]{MCM05}.
\subsection{A hands-on approach}
We now indicate a more direct approach to show that the monodromy group of $L$ must commute with $T$, when $D$ is not a square.
Let $f\in \mathrm{Aff}^+(X, \omega)$, $A = d(f)$, and write
\[
A = \begin{pmatrix}
a_{11} & a_{12}\\
a_{21} & a_{22}
\end{pmatrix}.
\]
We also write $\rho(f) = (u_{ij})_{1\leqslant i,j\leqslant 4}$.
Let $\gamma = a_1$. We have
\[A\cdot \mathrm{Per}(\gamma) = A
\cdot \begin{pmatrix}
\lambda\\
0
\end{pmatrix} = \mathrm{Per}(\rho(f)\cdot \gamma) =
\begin{pmatrix}
u_{11}\lambda + u_{31}b\\
u_{21}\lambda + u_{41}
\end{pmatrix}
\]
and it follows from the relation $\lambda^2 = \lambda e + b$ that

\[\begin{pmatrix}
a_{11}\\
a_{21}
\end{pmatrix}
=
\begin{pmatrix}
u_{11} - u_{31} e+ u_{31}\lambda\\
u_{21} - u_{41}\frac{e}{b} + u_{41}\frac{\lambda}{b} 
\end{pmatrix}.
\]Similarly, setting $\gamma = b_1$, we get 
\[
A\cdot \begin{pmatrix}
0\\
\lambda
\end{pmatrix} =
\begin{pmatrix}
u_{32}b + u_{12}\lambda\\
u_{42} + u_{22}\lambda
\end{pmatrix}.
\]
Just as before, we obtain
\[\begin{pmatrix}
a_{12}\\
a_{22}
\end{pmatrix}
=\begin{pmatrix}
u_{12} - u_{32}e + u_{32}\lambda \\
u_{22} - \frac{e}{b}u_{42} + \frac{\lambda}{b}u_{42}
\end{pmatrix}.
\]
Applying the same reasoning with $\gamma = a_2$ and $\gamma = b_2$ will give us relations between the numbers $u_{ij}$. Namely, we obtain with $\gamma = a_2$
\[
\begin{pmatrix}
a_{11}\\
a_{21}
\end{pmatrix}
=
\frac{1}{b}\begin{pmatrix}
u_{33}b +  u_{13}\lambda\\
u_{43} + u_{23}\lambda
\end{pmatrix}.
\]
Finally for $\gamma = e_4$, we get 
\[
\begin{pmatrix}
a_{12}\\
a_{22}
\end{pmatrix}
=
\begin{pmatrix}
u_{34}b +  u_{14}\lambda\\
u_{44} + u_{24}\lambda
\end{pmatrix}.
\]
We now have four explicit equations relating the coefficients of $\rho(f)$.
Since $\lambda$ is not rational, these equations split and give eight linear equations in the coefficients $u_{ij}$.
We can verify directly that they are another instance of the action on homology commuting with the matrix $T$.
\begin{prop}
These equations are satisfied if and only if $\rho(f)$ and $T$ commute.
\end{prop}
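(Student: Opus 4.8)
The plan is to recognise the eight scalar relations among the entries of $U := \rho(f)$ as a single matrix identity $UT = TU$ in the basis $(a_1,b_1,a_2,b_2)$. First I would package the four period relations. Let $P$ be the $2\times 4$ matrix whose columns are $\mathrm{Per}(a_1),\mathrm{Per}(b_1),\mathrm{Per}(a_2),\mathrm{Per}(b_2)$, so that
\[
P=\begin{pmatrix}\lambda & 0 & b & 0\\ 0 & \lambda & 0 & 1\end{pmatrix}.
\]
The identities $A\cdot\mathrm{Per}(\gamma)=\mathrm{Per}(U\gamma)$ for $\gamma$ ranging over the basis then read $AP=PU$ with $A=d(f)$. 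Eliminating the four entries of $A$ between the values coming from $\gamma=a_1,a_2$ and from $\gamma=b_1,b_2$, and using that $1$ and $\lambda$ are linearly independent over $\mathbb Q$ (this is the only place the non-square hypothesis on $D$ is used), is exactly what produces the eight displayed equations.

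For the equivalence proper I would compute $TU$ and $UT$ entrywise. The key simplification is that the third and fourth rows of $T$ are $(1,0,0,0)$ and $(0,b,0,0)$, so rows three and four of $TU$ are just $u_{1j}$ and $b\,u_{2j}$. Comparing these eight entries with the corresponding entries of $UT$ reproduces the eight equations verbatim, for instance $u_{13}=b\,u_{31}$, $u_{14}=u_{32}$, $b\,u_{21}=e\,u_{41}+u_{43}$, and so on. This settles the direction \emph{commute $\Rightarrow$ equations}. For the converse one checks that each of the eight entries in the first two rows of $TU-UT$ is a $\mathbb Z[e,b]$-combination of the left-hand sides already treated; hence the eight equations force the full commutator to vanish. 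This verification is mechanical and is the only computational content.

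I would also record a coordinate-free variant, both as a sanity check and because it explains the identity conceptually. From $\lambda^2=e\lambda+b$ one reads off $PT=\lambda P$, i.e.\ the period vector of $\omega$ is an eigenvector of $T$. Combined with $AP=PU$ this gives
\[
PUT=(AP)T=A(PT)=\lambda\,AP=\lambda\,PU=PTU,
\]
so $P(UT-TU)=0$. Applying the non-trivial automorphism $\sigma\colon\lambda\mapsto\lambda'=e-\lambda$ of $\mathbb Q(\lambda)$, which fixes the integral matrix $U$ and the rational entries of $T$ and $P$, yields the conjugate relations $P'T=\lambda'P'$ and $A^\sigma P'=P'U$, whence $P'(UT-TU)=0$ as well. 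The $4\times 4$ matrix obtained by stacking $P$ over $P'$ has determinant $b(\lambda-\lambda')^2\neq 0$, again by the non-square hypothesis, so it is invertible and forces $UT=TU$.

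The main obstacle is not a single deep step but the bookkeeping: matching each of the sixteen commutator entries to the correct subset of the eight equations, and, in the conceptual variant, justifying that conjugation by $\sigma$ is legitimate. The latter rests precisely on $U$ being integral and on $1,\lambda$ being $\mathbb Q$-independent, that is on $D$ not being a perfect square, which is the standing assumption of this subsection.
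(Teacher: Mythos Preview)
Your proposal is correct. The paper itself leaves this proposition as a direct verification (``We can verify directly\ldots''), and your first approach carries out precisely that verification: you identify the eight scalar equations as the vanishing of rows $3$ and $4$ of $TU-UT$, and then check that rows $1$ and $2$ are forced. One small imprecision: the entries of rows $1$ and $2$ of $TU-UT$ are not all $\mathbb Z[e,b]$-linear in the eight ``known'' differences; for instance $(TU-UT)_{21}=u_{41}-u_{23}$ while $(TU-UT)_{43}=b(u_{23}-u_{41})$, so a factor $1/b$ appears. This is harmless since $b\neq 0$, and can in fact be avoided by working with the $2\times 2$ block decomposition $T=\begin{pmatrix} eI & B\\ C & 0\end{pmatrix}$ with $BC=CB=bI$, from which the implication follows by multiplying the row-$3,4$ relations on the left by $B$.

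Your coordinate-free variant via the Galois involution $\sigma$ is a genuinely different route and is not in the paper. It is cleaner: once one observes $PT=\lambda P$ and $AP=PU$, the relation $P(UT-TU)=0$ is immediate, and the conjugate relation $P'(UT-TU)=0$ comes from applying $\sigma$ (legitimate because the entries of $A$ lie in $\mathbb Z[\lambda,1/b]$ by the displayed formulas, $U$ is integral, and $T$ is rational). The stacked matrix has determinant $\pm b(\lambda-\lambda')^2\neq 0$, so $UT=TU$. This argument makes transparent why the non-square hypothesis is exactly what is needed, whereas the entrywise check obscures this. Either argument would serve as the omitted proof.
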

This approach is more elementary in the sense that one does not need to know about real multiplication to show that the matrix $T$ must commute with $\rho(f)$.
\section{Echoes of the Weierstrass curves}\label{sectionWD}
In this section we compute the monodromy groups modulo two of the generators of Weierstrass curves, and prove \cref{thdihedral}.
Then we classify the echoes of the components of $W_D$ and prove \cref{thWD} and \cref{thSTS}.
After that, we determine which covers of reduced square-tiled surfaces in $\mathcal H(2)$ have lattice of absolute periods $\mathbb Z+i\mathbb Z$, and prove \cref{thTypes}.
Finally, we remark that one can count the sizes of the orbits of reduced square-tiled in $\widetilde {\mathcal H}(2)$ surfaces, applying a result of Leli\`evre and Royer.
\subsection{Monodromy groups}
We work again with the generators of $W_D$ given by the surfaces of \cref{Lshaped}, and the marking of \cref{explicit}.
Recall from \cref{corHyp} that there are five elements of $H_1(\Sigma, \mathbb Z/2\mathbb Z)$ that lift $L$ to the hyperelliptic component, while the other ten lift it to the odd component. 
Using the criterion given in \cref{corHyp}, we verify that in our basis $(a_1, b_1, a_2, b_2)$ of $H_1(\Sigma, \mathbb Z/2\mathbb Z)$, those five elements are
$(1, 0, 1, 0)$, $(1, 0, 1, 1)$, $(1,0, 0, 1)$, $(0, 1, 0, 0)$ and $(1, 1, 0, 0)$.
Note that it follows from \cref{sameDisc} that the monodromy group of $L$ preserves these covers.

As a consequence of commutation with real multiplication, we show that the monodromy groups modulo two of $L$ are generated by the horizontal and vertical twists when $D\not \equiv 1\mod 8$.
\begin{prop}
Let $D \geqslant 5$ be such that $D \equiv 0, 4, 5\mod 8$ and $(X, \omega)\in \Omega\mathcal M_2$ generating $W_D$.
The monodromy group modulo two of $(X, \omega)$ is dihedral, generated by two Dehn twists.
\end{prop}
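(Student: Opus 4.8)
The plan is to prove the sharper statement that the monodromy group modulo two, write $G=\rho\left(\mathrm{Aff}^+(X,\omega)\right)\subset \mathrm{Sp}\left(H_1(\Sigma,\mathbb Z/2\mathbb Z)\right)$, is exactly the subgroup $\langle H,V\rangle$ generated by the horizontal and vertical twists of \cref{explicit}. Each of $H$ and $V$ is a product of Dehn twists in the cylinders of a single direction, and a Dehn twist acts modulo two as a transvection of order two; since these transvections commute, both $H$ and $V$ are involutions, and a direct check on the reduced matrices confirms $H^2=V^2=\mathrm{Id}$. A group generated by two involutions is dihedral, so once the equality $G=\langle H,V\rangle$ is established the dihedral conclusion is automatic, the precise order being read off from the order of $HV$. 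The inclusion $\langle H,V\rangle\subseteq G$ is immediate because $H$ and $V$ come from elements of $\mathrm{Aff}^+(X,\omega)$, so the whole content is the reverse inclusion.

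For the reverse inclusion I would squeeze $G$ between $\langle H,V\rangle$ and an explicit overgroup, using two invariance properties. First, by \cref{commutationReal} every element of $G$ commutes with the reduction $\bar T$ of the real-multiplication generator $T$ constructed in \cref{sectionReal}. The relation $T^2=eT+b$ reduces to $\bar T^2=e\bar T+b\pmod 2$, and the three congruence classes in the statement correspond exactly to the three possible $\mathbb F_2$-algebras $\mathbb F_2[\bar T]$: for $D\equiv 0$ one gets $\bar T^2=0$, for $D\equiv 4$ one gets $(\bar T+\mathrm{Id})^2=0$, and for $D\equiv 5$ one gets $\bar T^2+\bar T+\mathrm{Id}=0$, so that $\mathbb F_2[\bar T]\cong\mathbb F_4$. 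In each case I would compute the centraliser $Z(\bar T)$ of $\bar T$ inside $\mathrm{Sp}\left(H_1(\Sigma,\mathbb Z/2\mathbb Z)\right)$: the self-adjointness of $T$ makes the $\mathbb F_2$-symplectic form compatible with the $\mathbb F_2[\bar T]$-structure, so that $Z(\bar T)$ is its isometry group. In particular, when $D\equiv 5$ the space $H_1(\Sigma,\mathbb Z/2\mathbb Z)$ becomes a two-dimensional $\mathbb F_4$-vector space carrying an $\mathbb F_4$-bilinear alternating form, whence $Z(\bar T)\cong \mathrm{Sp}_2(\mathbb F_4)=\mathrm{SL}_2(\mathbb F_4)$.

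The second constraint is that, by \cref{corHyp} and the remark following \cref{sameDisc}, $G$ preserves the distinguished set of five classes in $H_1(\Sigma,\mathbb Z/2\mathbb Z)\setminus\{0\}$ whose associated double cover lies in the hyperelliptic component, equivalently the winding-number quadratic form refining the symplectic form. The statement then follows from the finite verification that the setwise stabiliser of these five classes inside $Z(\bar T)$ is precisely $\langle H,V\rangle$, giving $\langle H,V\rangle\subseteq G\subseteq \langle H,V\rangle$. I expect the genuine obstacle to be the case $D\equiv 5\pmod 8$: there $Z(\bar T)\cong \mathrm{SL}_2(\mathbb F_4)\cong A_5$ is \emph{not} dihedral and acts transitively on the fifteen nonzero classes, so commutation with real multiplication alone is insufficient. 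It is exactly the hyperelliptic-set constraint that cuts $A_5$ down to the stabiliser of a five-element block, the dihedral group of order ten $\langle H,V\rangle$. For $D\equiv 0,4\pmod 8$ the analogous intersection is a routine computation on $\mathbb F_2$-matrices, and in all three cases we conclude that the monodromy group modulo two equals $\langle H,V\rangle$, hence is dihedral and generated by the horizontal and vertical twists.
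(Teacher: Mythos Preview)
Your proposal is correct and follows essentially the same strategy as the paper: sandwich $G$ between $\langle H,V\rangle$ and the intersection of the centraliser of $\bar T$ with the setwise stabiliser of the five hyperelliptic classes, then verify by a finite check that this intersection coincides with $\langle H,V\rangle$. The paper carries this out case by case with a bare order count, while you add a structural layer by identifying $\mathbb F_2[\bar T]$ and, in the $D\equiv 5$ case, the centraliser as $\mathrm{SL}_2(\mathbb F_4)\cong A_5$; this is illuminating but does not bypass the final finite verification, which both arguments ultimately rely on. Two small remarks: your parenthetical identification of the hyperelliptic five-set with the isotropic set of ``the winding-number quadratic form'' is not the criterion the paper uses (it is the Weierstrass-point criterion of \cref{corHyp}) and would need separate justification; and the word ``block'' is potentially misleading, since you have not argued that the hyperelliptic five-set is a block of imprimitivity for the $A_5$-action---what you actually need, and what the finite check gives, is simply that its setwise stabiliser in $A_5$ has order ten.
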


\begin{proof}
Let us suppose that $D\equiv 0\mod 8$. Then $W_D$ is generated by the $L$-shaped translation surface of \cref{Lshaped}, where $e = 0$ and $4b = D$.
Since $D\equiv 0\mod 8$, we have $b\equiv 0\mod 2$. 
The elements in the monodromy group modulo two induced by the horizontal and vertical Dehn twists have been computed in \cref{explicit}. 
They generate a group of order eight in $\mathrm{Sp}(H_1(\Sigma, \mathbb Z/2\mathbb Z))$. The group $\mathrm{Sp}(H_1(\Sigma, \mathbb Z/2\mathbb Z))$ is of order 720.
The monodromy group modulo two $G$ of $L$ is included in its subgroup of elements that commute with $T$, and preserve the set of covers lifting to the hyperelliptic component. This latter group can be checked to be of order eight. Therefore $G = \langle V, H\rangle$.
We proceed similarly for $D \equiv 4, 5\mod 2$ with $(e, b)$ being respectively $(0, 1), (1, 1)$ modulo two.
\end{proof}
When $D\equiv 1\mod 8$, we have seen in \cref{explicit} that the horizontal and vertical Dehn twists do not generate the monodromy group modulo two of $L$. However, we can still characterise it.

\begin{prop}
Let $(X, \omega)\in \Omega\mathcal M_2$ generating a Weierstrass curve of discriminant $D$, with $D\equiv 1\mod 8$.
The monodromy group modulo two of $(X, \omega)$ is dihedral of order twelve.
\end{prop}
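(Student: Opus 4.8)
The plan is to follow the same strategy that worked for the case $D\not\equiv 1\mod 8$: bound the monodromy group modulo two from above using the structural constraints coming from \cref{commutationReal} and the preservation of the hyperelliptic covers, and bound it from below by exhibiting enough explicit elements. The difference here is that, as shown in \cref{explicit} (\cref{D1}), the horizontal twist $H$ and the vertical twist $V$ no longer suffice: we established there that for $D\equiv 1\mod 8$ there is always an additional element of the monodromy group not lying in $\langle H, V\rangle$. That extra element — the rotation-type map in the case $e=-1$, the diagonal twists in the cases $e=1$, or the explicit Veech group element for $b=6$ — must be incorporated into the generating set.

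First I would record the reduction $D\equiv 1\mod 8$ to the residues of $(e,b)$ modulo two: since $D=e^2+4b$, this forces $e$ odd (so $e\in\{-1,1\}$) and $b$ even, exactly as noted at the start of \cref{explicit}. I would then compute, modulo two, the symplectic matrices $H$ and $V$ in the fixed basis $(a_1,b_1,a_2,b_2)$, together with the reduction of the extra element produced in the proof of \cref{D1}. The group $G$ generated by these three elements in $\Sp(H_1(\Sigma,\Z/2\Z))$ can be computed directly; I expect to verify that $\lvert G\rvert = 12$ and that $G$ is dihedral, presenting it as $\langle r, s\mid r^6 = s^2 = 1,\ srs = r^{-1}\rangle$ (or the isomorphic $D_6\cong \Z/2\Z\times \Sy 3$), by identifying a reflection and an order-six rotation among the generators. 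This is the lower bound.

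For the upper bound I would argue as in the previous proposition. By \cref{commutationReal}, the monodromy group modulo two is contained in the subgroup of $\Sp(H_1(\Sigma,\Z/2\Z))$ commuting with the reduction $\bar T$ of the real-multiplication matrix $T$ from \cref{RealMulitplication}; here $T\equiv \left(\begin{smallmatrix} e & 0 & b & 0\\ 0 & e & 0 & 1\\ 1 & 0 & 0 & 0\\ 0 & b & 0 & 0\end{smallmatrix}\right)\pmod 2$ with $e=1$, $b=0$. Moreover, by \cref{sameDisc}, the monodromy group preserves the set of five covers lifting $L$ to the hyperelliptic component, listed at the start of this subsection. I would verify by direct computation that the subgroup of $\Sp(H_1(\Sigma,\Z/2\Z))$ satisfying both constraints — commuting with $\bar T$ and preserving that distinguished five-element set — has order exactly twelve. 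Combined with the lower bound, this forces $G$ to be this entire group, hence dihedral of order twelve.

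The main obstacle will be the bookkeeping in the upper-bound computation: unlike the order-eight case, where commutation with $\bar T$ together with the hyperelliptic constraint cut $\Sp(H_1(\Sigma,\Z/2\Z))$ (of order $720$) down sharply, here I must check that the centraliser-plus-stabiliser subgroup does not exceed order twelve, which requires understanding the $\Z/2\Z$-module structure of the $\bar T$-centraliser and how the five hyperelliptic covers sit inside it. I expect that $\bar T$ modulo two is no longer semisimple (its minimal polynomial $x^2+ex+b\equiv x^2+x\equiv x(x+1)$ still factors, but the relevant eigenspace geometry over $\Z/2\Z$ is degenerate), so the centraliser is larger than in the $D\equiv 0,4,5$ cases, and it is precisely the additional hyperelliptic-set constraint that trims it back down to twelve. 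Pinning down this interplay is the delicate point; once the order-twelve centraliser-stabiliser is confirmed and matched against the explicit dihedral subgroup of the same order, the identification is immediate.
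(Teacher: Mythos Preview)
Your proposal is correct and follows the same two-sided strategy as the paper: the upper bound via the centraliser of $\bar T$ intersected with the stabiliser of the five hyperelliptic covers, and the lower bound via the explicit elements produced in \cref{explicit}. The only difference is in how the lower bound is packaged: the paper observes that $\langle H,V\rangle$ already has order six, so since the monodromy group strictly contains it (by \cref{D1}) and is contained in a group of order twelve, it must equal that group---no explicit computation with the third generator is needed. Your plan to compute the group generated by $H$, $V$, and the extra element directly also works, but is slightly more laborious.
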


\begin{proof}
As before, we consider the $L$-shaped surface of \cref{Lshaped}, with $(e,b)$ such that $e\in \{-1, 1\}$ and $D = e^2 + 4b$.
The group of elements in $\mathrm{Sp}(H_1(\Sigma, \mathbb Z/2\mathbb Z))$ that commute with $T$ and preserve the covers lifting to the hyperelliptic component is then checked to be dihedral of order twelve. However the subgroup $\langle H, V \rangle$ is of order six, and is strictly included in the monodromy group modulo two by \cref{D1}.
Therefore the monodromy group modulo two of $L$ is of order twelve.
\end{proof}

Concretely, this monodromy group is generated for example by the matrices $H$ and 
\[\begin{pmatrix}
    0 & 1 & 0 & 1\\
    0 & 0 & 1 & 0\\
    0 & 1 & 0 & 0\\
    1 & 0 & 1 & 0
\end{pmatrix}.\]

\subsection{Classification of the echoes}
We now use the above generators of the monodromy group modulo two to classify the echoes of Weierstrass curves.
It suffices by  \cref{corEchoes} to classify the orbits of $H_1(\Sigma, \mathbb Z/2\mathbb Z)\setminus \{0\}$ under the action of these groups. In \cref{tabEchoes}, we summarise results obtained by doing so, for the different congruences of $D$ modulo eight.
In the first line we recall the order of $G$, the monodromy group modulo two of their generators.
We then show the sets of elements of $H_1(\Sigma, \mathbb Z/2\mathbb Z)$ that lift $L$ to generators of the same Teichm\"uller curve.
For this purpose, we label the elements of $H_1(\Sigma, \mathbb Z/2\mathbb Z)$ as follows.
The element $(x_1, x_2, x_3, x_4)\in H_1(\Sigma, \mathbb Z/2\mathbb Z)$ corresponds to the number $\sum 2^i x_i$.
For example, $(1, 0, 1, 1)$ is numbered $1 + 4 + 8 = 13$. 
We then write for example $\{2, 5\}$ to indicate that the Teichm\"uller curve generated by the lift of $L$ by the covers corresponding to $(0, 1, 0, 0)$ and $(1, 0, 1, 0)$ are the same, and that these are the only covers giving this curve.
\begin{table}[!h]
    \centering
\hspace*{-1.5cm}
\begin{tabular}{| c || c | c | c | c |}
\hline
     $D\mod 8$&  $0$ & $1$ & $4$ & $5$\\
\hline
     Order of $G$ & $8$ & $12$ & $8$ & $10$\\
\hline
     Echoes in $\mathcal H(2, 2)^\mathrm{hyp}$ & $\{2\}$, $\{3, 5, 9, 13\}$ & $\{2, 5\}$, $\{3, 9, 13\}$  & $\{2, 3, 9, 13\}$, $\{5\}$ & $\{2, 3, 5, 9, 13\}$\\
\hline
    Echoes in $\mathcal H(2, 2)^\mathrm{odd}$ & \makecell{$\{1, 7, 11, 15\}$, \\ $\{4, 6\}$, $\{8, 10, 12, 14\}$} & \makecell{$\{1, 6, 8, 11, 12, 15\}$,\\$\{4, 10, 14\}$, $\{7\}$}
    & \makecell{$\{1, 4, 11, 14\}$\\ $\{6, 7, 8, 12\}$, $\{10, 15\}$} & \makecell{$\{1, 8, 11, 12, 14\}$\\ $\{4, 6, 7, 10, 15\}$}\\
\hline
\end{tabular}
    \caption{The different echoes of $W_D$.}
    \label{tabEchoes}
\end{table}
\begin{center}

\end{center}

The orbits in \cref{tabEchoes} correspond to the echoes of each Weierstrass curve, which proves \cref{thWD}.
It also allows us to prove \cref{thSTS}.
\begin{proof}
Reduced square-tiled surfaces belong to the same $\mathrm{SL}_2(\mathbb Z)$-orbit if and only if they generate the same Teichm\"uller curve.
They cover a square-tiled surface in $\mathcal H(2)$ with $d$ squares if and only if the corresponding Teichm\"uller curve is an echo of a component of $W_{D}$ with $D = d^2$. The $\mathrm{SL}_2(\mathbb Z)$-orbits of reduced square-tiled surfaces covering one in $\mathcal H(2)$ are thus classified by the echoes of Weierstrass curves with discriminant $D$.
Since $D$ is a square, $D\equiv 0, 1, 4\mod 8$, and each component of $W_D$ has five echoes.
If $d$ is even or $d=3$, then $W_D$ has only one component.
When $d\geqslant 5$ is odd, then $W_D$ has two components, giving ten $\mathrm{SL}_2(\mathbb Z)$-orbits of reduced square-tiled surfaces with $2d$ tiles.
\end{proof}

When $D\not \equiv 5\mod 8$ and $(X, \omega)$ generates $W_D$, there is a single lift of $(X, \omega)$ with Veech group $\mathrm{SL}(X, \omega)$.
This creates a copy of $W_D$ in $\mathcal M_3$.
 We will see in \cref{sectionHigher} that primitive Teichm\"uller curves always have copies in higher genera moduli spaces.

\subsection{Primitivity}
Let $(X, \omega) = L(b, e)$ be the $L$-shaped surface generating a Weierstrass curve of discriminant $D$ from \cref{explicit}.
Let us denote by $\Lambda = \mathrm{Per}\left (H_1(\Sigma, \mathbb Z)\right )$ the set of its periods.
If $p\colon S\to \Sigma$ is a covering, denote by $\Lambda_p$ the set of periods of $p^*(\omega)$.
In other words, if $p$ is the cover associated with $f\in H^1(\Sigma, \mathbb Z/2\mathbb Z)$, then 
\[
\Lambda_p = \mathrm{Per}(\{\gamma\in H_1(\Sigma, \mathbb Z) \mid f(\gamma) = 0\}).
\]
Note that if $p$ is the cover associated with $\gamma$, then $f\colon \alpha\mapsto \alpha\cdot \gamma$.
When $D$ is not a square, then $\Lambda_p$ is always of index two in $\Lambda$.
\begin{lemma}
The subgroup $\Lambda_p\subset \Lambda$ is of index at most two, and is of index two when $D$ is not a square.
\end{lemma}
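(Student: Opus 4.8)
The plan is to reduce everything to the two homomorphisms already in play: the intersection pairing $f=(\,\cdot\,)\cdot\gamma$ defining the cover, and the period map $\mathrm{Per}$. Writing $K=\ker f=\{\alpha\in H_1(\Sigma,\Z)\mid \alpha\cdot\gamma\equiv 0\bmod 2\}$, I start from the stated identity $\Lambda_p=\mathrm{Per}(K)$. Since $\gamma$ is a non-zero class in $H_1(\Sigma,\Z/2\Z)$ and the mod-two intersection form is non-degenerate, $f$ is a surjection onto $\Z/2\Z$, so $K$ has index two in $H_1(\Sigma,\Z)$.

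First I would dispose of the ``index at most two'' claim, which is purely formal and uses nothing about $D$. Composing $\mathrm{Per}\colon H_1(\Sigma,\Z)\to\Lambda$ with the projection $\Lambda\to\Lambda/\Lambda_p$ gives a surjection that kills $K$ (because $\mathrm{Per}(K)=\Lambda_p$), hence factors through a surjection $H_1(\Sigma,\Z)/K\twoheadrightarrow\Lambda/\Lambda_p$. As $H_1(\Sigma,\Z)/K\cong\Z/2\Z$, the index $[\Lambda:\Lambda_p]$ divides $2$.

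For the sharp statement when $D$ is not a square, I would show that the whole question collapses once $\mathrm{Per}$ is injective: an injective $\mathrm{Per}$ restricts to an isomorphism $H_1(\Sigma,\Z)\xrightarrow{\sim}\Lambda$ carrying $K$ onto $\Lambda_p$, whence $[\Lambda:\Lambda_p]=[H_1(\Sigma,\Z):K]=2$ exactly. It then remains to prove injectivity of $\mathrm{Per}$ for $L=L(b,e)$ with $D=e^2+4b$ a non-square. Using the marking of \cref{explicit}, the basis periods are $\mathrm{Per}(a_1)=\lambda$, $\mathrm{Per}(b_1)=i\lambda$, $\mathrm{Per}(a_2)=b$, $\mathrm{Per}(b_2)=i$. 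A vanishing relation $x_1\lambda+x_2 b+i(y_1\lambda+y_2)=0$ splits into real and imaginary parts, and since $\lambda=\tfrac{e+\sqrt D}{2}$ is irrational (as $D$ is not a perfect square) each part forces all integer coefficients to vanish. This is the exact analogue of \cref{perInjective} for the decagon, and reflects the algebraic primitivity of $W_D$ for non-square $D$.

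The only real content, and the step I expect to matter, is the injectivity of $\mathrm{Per}$: this is precisely where the non-square hypothesis enters, through the irrationality of $\lambda$; everything else is bookkeeping with indices of subgroups. I would also note that this explains why the square case is treated separately: when $D$ is a square $\lambda$ is rational, $\mathrm{Per}$ acquires a kernel, and for some covers $\gamma$ that kernel meets the odd coset of $K$, producing index one. This is exactly the dichotomy underlying the reduced versus non-reduced distinction exploited in the square-tiled analysis.
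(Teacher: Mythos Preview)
Your proof is correct and follows essentially the same approach as the paper: both argue that $\Lambda_p$ is the image under $\mathrm{Per}$ of an index-two subgroup of $H_1(\Sigma,\Z)$, hence has index at most two, and that when $D$ is not a square the period map is an isomorphism, forcing the index to be exactly two. The paper simply asserts the injectivity of $\mathrm{Per}$ for non-square $D$, whereas you spell it out explicitly using the basis periods and the irrationality of $\lambda$; this added detail is correct and in the spirit of \cref{perInjective}.
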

\begin{proof}
Since $H_1(S, \mathbb Z)$ is sent to an index two subgroup of $H_1(\Sigma, \mathbb Z)$ by $p$, the index of $\Lambda_p$ in $\Lambda$ is at most two.
If $D$ is not a square, then the period map $\mathrm{Per}\colon H_1(\Sigma, \mathbb Z)\to \Lambda$ is a group isomorphism, hence $[\Lambda\colon \Lambda_p] = 2$.
\end{proof}
We thus now assume that $D$ is a square and $D = d^2$. 
Since $\lambda^2 = e\lambda + b$, we have
\[\Lambda = \lambda\mathbb Z + b\mathbb Z + i\left (\mathbb Z + \lambda \mathbb Z\right ) = \lambda\mathbb Z + i\mathbb Z.\]
Let us say that the cover $p$ is \emph{primitive} when $\Lambda_p = \Lambda$.
In other words, a cover is primitive if the lattice of absolute periods of the cover $\tilde L = p^*(L)$ is the same as the one of $L$.
We now identify the covers $p$ that are primitive.
\begin{lemma}
The covers associated with $\gamma$ of the form $(x, 0, 0, y)$ are primitive.
\end{lemma}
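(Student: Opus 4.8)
The plan is to read off the four basis periods from the marking of \cref{explicit}, and then exploit the fact that $D$ being a square forces $\lambda$ to be an integer, so that the lattice $\Lambda$ is generated by only two of those periods — both of which automatically survive in $\Lambda_p$. First I would recall that in the symplectic basis $(a_1,b_1,a_2,b_2)$ the period map is $\mathrm{Per}(a_1)=\lambda$, $\mathrm{Per}(b_1)=i\lambda$, $\mathrm{Per}(a_2)=b$ and $\mathrm{Per}(b_2)=i$. Since $D=d^2$, one checks that $\lambda=\tfrac{e+d}{2}\in\Z$ (for $e=0$ the discriminant $D\equiv 0\bmod 4$ makes $d$ even, and for $e=\pm 1$ both $e$ and $d$ are odd). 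Consequently $b=\lambda(\lambda-e)\in\lambda\Z$ and $i\lambda\in i\Z$, so that $\Lambda=\lambda\Z+i\Z$ is generated as a group by the two periods $\mathrm{Per}(a_1)=\lambda$ and $\mathrm{Per}(b_2)=i$ alone.

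Next I would unwind the definition of $\Lambda_p$. The cover $p$ associated with $\gamma$ corresponds to the functional $f\colon\delta\mapsto\delta\cdot\gamma \bmod 2$, so $\Lambda_p=\mathrm{Per}(\ker f)$. Writing $\gamma=x\,a_1+y\,b_2$ and $\delta=\delta_1 a_1+\delta_2 b_1+\delta_3 a_2+\delta_4 b_2$, the only basis pairings that do not vanish are $a_1\cdot b_1=1$ and $a_2\cdot b_2=1$, which gives
\[
\delta\cdot\gamma \equiv x\,\delta_2+y\,\delta_3 \pmod 2 .
\]
In particular both $a_1=(1,0,0,0)$ and $b_2=(0,0,0,1)$ have vanishing second and third coordinates, hence lie in $\ker f$ for every choice of $(x,y)$.

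It then follows that $\lambda=\mathrm{Per}(a_1)$ and $i=\mathrm{Per}(b_2)$ both belong to $\Lambda_p$, so $\Lambda_p\supseteq\lambda\Z+i\Z=\Lambda$; combined with the inclusion $\Lambda_p\subseteq\Lambda$ from the previous lemma, this yields $\Lambda_p=\Lambda$, i.e.\ $p$ is primitive. The computation is otherwise immediate, and the single point demanding care is the integrality of $\lambda$ in the square case: this is exactly what reduces $\Lambda$ from the four a priori generators to the two distinguished ones $\mathrm{Per}(a_1)$ and $\mathrm{Per}(b_2)$, and it is what makes the class $(x,0,0,y)$ — whose support avoids $b_1$ and $a_2$ — harmless. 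I do not expect any genuine obstacle beyond this bookkeeping.
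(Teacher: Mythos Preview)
Your argument is correct and follows essentially the same route as the paper: identify the functional $f(\delta)=\delta\cdot\gamma$, observe that $a_1$ and $b_2$ lie in $\ker f$, and conclude that their periods $\lambda$ and $i$ generate $\Lambda_p=\lambda\Z+i\Z=\Lambda$. The only difference is cosmetic: the paper has already recorded $\Lambda=\lambda\Z+i\Z$ just before the lemma, so it does not re-derive the integrality of $\lambda$ inside the proof.
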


\begin{proof}
Let us show in this case that $\Lambda_p = \Lambda$.
The $f\in H^1(\Sigma, \mathbb Z/2\mathbb Z)$ corresponding to $\gamma$ is
\[(x_1, y_1, x_2, y_2)\mapsto x_2 y - y_1 x.\]
Therefore, $(1, 0, 0, 0)$ is in $\ker f$. Its period is $\lambda$.
Similarly, $(0, 0, 0, 1)$ is in $\ker f$. Its period is $i$.
Thus $\Lambda_p = \lambda\mathbb Z + i\mathbb Z$.
\end{proof}
They correspond to $(1, 0, 0, 0)$, $(0, 0, 0, 1)$ and $(1, 0, 0, 1)$ which are labeled in \cref{tabEchoes} by $1$, $8$, and $9$ respectively.
Let us now consider the cover labeled $5$.
\begin{lemma}
The cover associated with $\gamma = (1, 0, 1, 0)$ is primitive if and only if $e+d\equiv 0\mod 4$.
\end{lemma}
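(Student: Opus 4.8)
The plan is to compute the period lattice $\Lambda_p$ of this cover explicitly and compare it with $\Lambda = \lambda\mathbb Z + i\mathbb Z$. First I would write down the homomorphism $f$ whose kernel defines the cover. As recalled just before the lemma, $f$ is $\delta\mapsto \delta\cdot\gamma$ with $\gamma = (1,0,1,0) = a_1 + a_2$. Using the symplectic pairing in the basis $(a_1,b_1,a_2,b_2)$, a direct computation gives $f(x_1,y_1,x_2,y_2) = y_1 + y_2 \bmod 2$, so that $\ker f$ is the index-two subgroup of $H_1(\Sigma,\mathbb Z)$ on which $y_1 + y_2$ is even.

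Next I would exhibit an explicit $\mathbb Z$-basis of $\ker f$, for instance $a_1,\ a_2,\ b_1 + b_2,\ b_1 - b_2$, and check by a $4\times 4$ determinant equal to $2$ that they generate exactly this index-two subgroup. Recalling from \cref{explicit} that $\mathrm{Per}(a_1) = \lambda$, $\mathrm{Per}(a_2) = b$, $\mathrm{Per}(b_1) = i\lambda$ and $\mathrm{Per}(b_2) = i$, evaluating $\mathrm{Per}$ on this basis yields
\[
\Lambda_p = \bigl(\lambda\mathbb Z + b\mathbb Z\bigr) + i\bigl((\lambda+1)\mathbb Z + (\lambda - 1)\mathbb Z\bigr).
\]

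It then remains to simplify the two parts. Since $D = d^2$ forces $d\equiv e \pmod 2$, the number $\lambda = \tfrac{d+e}{2}$ is an integer and $b = \lambda(\lambda - e)\in\lambda\mathbb Z$, so the real part collapses to $\lambda\mathbb Z$. For the imaginary part, $(\lambda+1)\mathbb Z + (\lambda-1)\mathbb Z = \gcd(\lambda+1,\lambda-1)\mathbb Z = \gcd(2,\lambda+1)\mathbb Z$, which equals $\mathbb Z$ when $\lambda$ is even and $2\mathbb Z$ when $\lambda$ is odd. Hence $\Lambda_p = \lambda\mathbb Z + i\mathbb Z = \Lambda$ exactly when $\lambda$ is even, that is when $\tfrac{d+e}{2}$ is even, which is the asserted condition $e + d \equiv 0 \pmod 4$.

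The computation is entirely routine; the only point requiring care is the bookkeeping of parities. One must use that $d\equiv e\pmod 2$ to know $\lambda$ is genuinely an integer (so that the generator $\gcd(2,\lambda+1)$ of the imaginary lattice is meaningful), and then correctly translate ``$\lambda$ even'' into the congruence $e + d\equiv 0\pmod 4$. The key structural input is that $b\in\lambda\mathbb Z$, which is precisely what makes the real part contribute nothing beyond $\lambda\mathbb Z$ and isolates the whole question in the imaginary direction.
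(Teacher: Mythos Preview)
Your proof is correct and follows essentially the same approach as the paper: both identify $f$ as $(x_1,y_1,x_2,y_2)\mapsto y_1+y_2\bmod 2$, note that the real part of $\Lambda_p$ is already $\lambda\mathbb Z$, and reduce the question to whether $i$ lies in the imaginary part, which comes down to the parity of $\lambda$. The only organisational difference is that you first write down an explicit $\mathbb Z$-basis of $\ker f$ and compute the lattice it spans, whereas the paper directly asks whether the single equation $y_1\lambda+y_2=1$ has a solution with $y_1+y_2$ even; both routes arrive at ``$\lambda$ even $\Leftrightarrow e+d\equiv 0\pmod 4$'' in the same way.
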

\begin{proof}
Here $f\colon (x_1,y_1,x_2,y_2)\mapsto - y_1 - y_2$.
We have $\gamma = (1, 0, 0, 0)\in \ker f$.
Thus $\lambda\in \Lambda_p$.
We have $\Lambda_p = \Lambda$ if and only if $i\in \Lambda_p$.
This is the case when one can write $i = x_1\lambda + x_2 b + i(y_1\lambda + y_2)$ with $y_1 + y_2\in 2\mathbb Z$.
That is, when $1 = y_1\lambda + y_2$, with $y_1+y_2\in 2\mathbb Z$.
When $\lambda$ is odd, reducing modulo two gives $y_1 + y_2 \equiv  1\mod 2$, and there is no solution.
When $\lambda$ is even, one can take $y_1 = 1$ and $y_2 = 1-\lambda$.
The result follows since $\lambda = \frac{e+d}{2}$ is even exactly when $e+d$ is a multiple of four.
\end{proof}
We now turn to the case of the cover labeled two.
\begin{lemma}
The cover associated with $\gamma = (0, 1, 0, 0)$ is primitive if and only if $d - e\equiv 2\mod 4$.
\end{lemma}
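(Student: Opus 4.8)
The plan is to mimic the computations of the three preceding lemmas: first determine the reduction-mod-two linear form $f = (\,\cdot\,)\cdot\gamma$ attached to $\gamma = (0,1,0,0)$, then describe $\ker f$, and finally compute $\Lambda_p = \mathrm{Per}(\ker f)$ and compare it with $\Lambda = \lambda\mathbb{Z} + i\mathbb{Z}$.

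First I would compute $f$. Since $\gamma = b_1$ and the basis $(a_1, b_1, a_2, b_2)$ is symplectic, pairing a general class $x_1 a_1 + y_1 b_1 + x_2 a_2 + y_2 b_2$ with $b_1$ leaves only the $a_1\cdot b_1 = 1$ contribution, so $f(x_1, y_1, x_2, y_2) = x_1 \bmod 2$. Hence $\ker f = \{(x_1, y_1, x_2, y_2) \mid x_1 \equiv 0 \bmod 2\}$. I would then read off $\Lambda_p$ from the period map $\mathrm{Per}(x_1,y_1,x_2,y_2) = (x_1\lambda + x_2 b) + i(y_1\lambda + y_2)$ already used above. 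Imposing $x_1$ even decouples the real and imaginary parts: the real part ranges over $2\lambda\mathbb{Z} + b\mathbb{Z} = \gcd(2\lambda, b)\mathbb{Z}$, while the imaginary part still ranges over all of $\mathbb{Z}$ (indeed $y_2$ alone attains every integer, no constraint on the $y_i$). Thus $\Lambda_p = \gcd(2\lambda, b)\mathbb{Z} + i\mathbb{Z}$, and primitivity $\Lambda_p = \Lambda$ is equivalent to the single condition $\gcd(2\lambda, b) = \lambda$.

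The final, arithmetic, step is the crux of the statement. Because $D = d^2$ with $D = e^2 + 4b$, one has $d \equiv e \bmod 2$, so $\lambda = \tfrac{e+d}{2}$ is an integer, and the relation $\lambda^2 = e\lambda + b$ gives $b = \lambda(\lambda - e) = \lambda\cdot\tfrac{d-e}{2}$. Writing $k = \tfrac{d-e}{2} \in \mathbb{Z}$, I get $\gcd(2\lambda, b) = \gcd(2\lambda, \lambda k) = \lambda\gcd(2, k)$, so $\gcd(2\lambda, b) = \lambda$ exactly when $k$ is odd, that is when $\tfrac{d-e}{2}$ is odd, i.e. when $d - e \equiv 2 \bmod 4$. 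This is precisely the claimed equivalence, and in the non-primitive case one sees that $\gcd(2\lambda,b) = 2\lambda$, so $\Lambda_p$ has index two in $\Lambda$, consistently with the general index bound.

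I do not anticipate a genuine obstacle. The only points that demand care are the bookkeeping of the symplectic pairing to get $f = x_1 \bmod 2$ (the sign being irrelevant modulo two), and verifying that both $\lambda$ and $k$ are integers so that the $\gcd$ manipulations are legitimate; the latter follows from $d \equiv e \bmod 2$, itself forced by $D = e^2 + 4b$. Everything else is the same decoupling-and-$\gcd$ scheme as in the lemma for the cover labeled five, now with the factor $2$ attached to $\lambda$ rather than to the constraint on the imaginary part, which is exactly why the congruence that appears is $d - e \equiv 2 \bmod 4$ instead of $e + d \equiv 0 \bmod 4$.
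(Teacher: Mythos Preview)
Your proof is correct and follows essentially the same approach as the paper's: both identify $f(x_1,y_1,x_2,y_2)=x_1\bmod 2$, observe that the imaginary part of $\Lambda_p$ is already all of $i\mathbb Z$, and reduce the question to whether $\lambda\in 2\lambda\mathbb Z+b\mathbb Z$, which amounts to $\lambda-e=\tfrac{d-e}{2}$ being odd. The only cosmetic difference is that you phrase the real-part computation via $\gcd(2\lambda,b)=\lambda\gcd(2,k)$, whereas the paper solves the equation $x_1+x_2(\lambda-e)=1$ with $x_1$ even directly.
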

\begin{proof}
This cover is primitive if and only if one can write $x_1\lambda + x_2 b = \lambda$ with $x_1\in 2\mathbb Z$, that is, $x_1 + x_2(\lambda-e) = 1$.
There is no solution if $\lambda - e$ is even, and $(x_1,x_2) = (1 + \lambda - e, 1)$ is a solution otherwise. Therefore the cover is primitive if and only if $\lambda-e = \frac{d-e}{2}$ is odd, that is when $d-e\equiv 2\mod 4$.
\end{proof}
Let us now turn to the cover number five.
\begin{lemma}
The cover associated with $\gamma = (1, 0, 1, 0)$ is primitive if and only if $d + e\equiv 0\mod 4$.
\end{lemma}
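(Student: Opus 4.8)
The plan is to compute the sublattice $\Lambda_p = \mathrm{Per}(\ker f)$ directly and compare it with $\Lambda = \lambda\mathbb{Z} + i\mathbb{Z}$, where $f\colon H_1(\Sigma, \mathbb{Z})\to \mathbb{Z}/2\mathbb{Z}$ is the reduction modulo two of the intersection with $\gamma$. Since $D = d^2$, the quantity $\lambda = \frac{e+d}{2}$ is an integer because $d\equiv e\bmod 2$, and the periods of the marking are $\mathrm{Per}(a_1) = \lambda$, $\mathrm{Per}(b_1) = i\lambda$, $\mathrm{Per}(a_2) = b$ and $\mathrm{Per}(b_2) = i$. For $\gamma = (1, 0, 1, 0)$ one computes $f\colon (x_1, y_1, x_2, y_2)\mapsto -y_1 - y_2$, so that $\ker f$ consists precisely of the classes with $y_1 + y_2$ even, with no constraint imposed on $x_1$ and $x_2$.

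I would then split a period into its real and imaginary parts. The real part of a period in $\ker f$ is $x_1\lambda + x_2 b$; since $b = \lambda(\lambda - e)$ is a multiple of $\lambda$, these real parts fill exactly $\lambda\mathbb{Z}$, already attained by $x_1$ alone (for instance $(1, 0, 0, 0)\in \ker f$ yields $\lambda$). Hence $\lambda\in \Lambda_p$ unconditionally, and since $\Lambda_p\subseteq \Lambda = \lambda\mathbb{Z} + i\mathbb{Z}$, the only remaining question is whether $i\in \Lambda_p$. Taking $x_1 = x_2 = 0$ to cancel the real part, this amounts to asking for integers $y_1, y_2$ with $y_1 + y_2$ even and $y_1\lambda + y_2 = 1$.

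The final step is a parity analysis of this condition. Reducing $y_1\lambda + y_2 = 1$ modulo two: if $\lambda$ is odd it forces $y_1 + y_2\equiv 1\bmod 2$, contradicting the constraint, so $i\notin \Lambda_p$ and in fact $\Lambda_p = \lambda\mathbb{Z} + 2i\mathbb{Z}$ is of index two; if $\lambda$ is even then $(y_1, y_2) = (1, 1 - \lambda)$ satisfies both $y_1\lambda + y_2 = 1$ and $y_1 + y_2 = 2 - \lambda$ even, so $i\in \Lambda_p$ and $\Lambda_p = \Lambda$. Thus the cover is primitive if and only if $\lambda$ is even, which by $\lambda = \frac{e+d}{2}$ is equivalent to $d + e\equiv 0\bmod 4$. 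I do not expect a genuine obstacle here; the only points deserving care are that $b$ is a multiple of $\lambda$ (so the real parts give nothing beyond $\lambda\mathbb{Z}$) and that the defining condition of $\ker f$ constrains only $y_1 + y_2$, leaving the real-part coefficients $x_1, x_2$ entirely free.
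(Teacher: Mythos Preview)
Your proof is correct and follows essentially the same approach as the paper: compute $f$ from the intersection pairing, note that $(1,0,0,0)\in\ker f$ gives $\lambda\in\Lambda_p$, reduce the question to whether $y_1\lambda+y_2=1$ has a solution with $y_1+y_2$ even, and conclude by the parity of $\lambda=\frac{e+d}{2}$. Your write-up is in fact slightly more explicit than the paper's (you spell out why $b\in\lambda\mathbb{Z}$ and identify $\Lambda_p=\lambda\mathbb{Z}+2i\mathbb{Z}$ in the odd case), but the argument is the same.
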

\begin{proof}
It is primitive if and only if $y_1\lambda + y_2 = 1$ has a solution with $y_1+y_2\in 2\mathbb Z$.
This happens exactly when $\lambda$ is even.
\end{proof}
We can now tell which of the covers lifting to the hyperelliptic component are primitive.
Indeed primitive covers are preserved by the action of $\mathrm{Aff}^+(X, \omega)$.
Moreover the orbits of covers lifting to the hyperelliptic component are represented in every case by the covers numbered $2, 5$ and $9$, see \cref{tabEchoes}.
We now turn to the covers lifting to the odd component, starting with the one numbered four.
\begin{lemma}
The cover associated with $\gamma = (0, 0, 1, 0)$ is primitive if and only if $d+e\equiv 2\mod 4$.
\end{lemma}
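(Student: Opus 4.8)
The plan is to follow the same template as the preceding lemmas: identify the linear form $f$ cutting out the cover, describe its kernel inside $H_1(\Sigma, \mathbb Z)$, and compute the lattice $\Lambda_p$ of periods of that kernel, comparing it with $\Lambda = \lambda\mathbb Z + i\mathbb Z$.

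First I would write down $f$ explicitly. Since the cover attached to $\gamma$ is given by $f\colon \alpha\mapsto \alpha\cdot\gamma$ and here $\gamma = a_2 = (0,0,1,0)$, the symplectic relations $a_i\cdot b_j = \delta_{ij}$ and $a_i\cdot a_j = b_i\cdot b_j = 0$ give $f(x_1,y_1,x_2,y_2) = -y_2$. Modulo two this is just $y_2$, so $\ker f = \{(x_1,y_1,x_2,y_2) : y_2\equiv 0\bmod 2\}$.

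Next I would compute the periods. Recall that $\mathrm{Per}(a_1) = \lambda$, $\mathrm{Per}(b_1) = i\lambda$, $\mathrm{Per}(a_2) = b$, $\mathrm{Per}(b_2) = i$, and that $b = \lambda(\lambda - e)\in\lambda\mathbb Z$. On $\ker f$ the coefficients $x_1, x_2, y_1$ are unconstrained while $y_2$ must be even. Hence the real part $x_1\lambda + x_2 b = \lambda\bigl(x_1 + x_2(\lambda-e)\bigr)$ still sweeps out all of $\lambda\mathbb Z$, whereas the imaginary part $y_1\lambda + y_2$ now ranges over $\lambda\mathbb Z + 2\mathbb Z$. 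This yields $\Lambda_p = \lambda\mathbb Z + i(\lambda\mathbb Z + 2\mathbb Z)$.

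Finally I would compare with $\Lambda$: we get $\Lambda_p = \Lambda$ exactly when $\lambda\mathbb Z + 2\mathbb Z = \mathbb Z$, that is when $\gcd(\lambda, 2) = 1$, i.e. $\lambda$ is odd. Since $D = d^2$ forces $\lambda = \frac{e+d}{2}\in\mathbb Z$, the value $\lambda$ is odd precisely when $e + d\equiv 2\bmod 4$, which is the claim. There is no genuine obstacle here; the only point requiring care is tracking which coordinate carries the parity constraint. The pairing with $\gamma = a_2$ constrains the coefficient $y_2$ of $b_2$, and because $\mathrm{Per}(b_2) = i$ governs the unit (imaginary) direction, the divisibility condition lands on $\lambda$ itself, rather than on $\lambda - e$ as it did for the cover numbered two.
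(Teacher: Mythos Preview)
Your proof is correct and follows essentially the same approach as the paper. The paper's version is more terse, simply checking whether $i$ can be written as $y_1\lambda + y_2$ with $y_2$ even (which forces $\lambda$ odd), whereas you compute the full lattice $\Lambda_p = \lambda\mathbb Z + i(\lambda\mathbb Z + 2\mathbb Z)$ before comparing; but the underlying computation and the conclusion are identical.
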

\begin{proof}
It is primitive if and only if $1 = y_1 \lambda + y_2$ with $y_2\in 2\mathbb Z$.
This is possible exactly when $\lambda$ is odd, that is when $d+e\equiv 2\mod 4$.
\end{proof}

We now consider the cover labeled seven.
\begin{lemma}
The cover associated with $\gamma = (1, 1, 1, 0)$ is primitive if and only if $e=0$ or $e + d\equiv 0\mod 4$.
\end{lemma}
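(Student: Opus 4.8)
The plan is to follow the template of the preceding lemmas: first identify the linear form $f\in H^1(\Sigma,\mathbb{Z}/2\mathbb{Z})$ cut out by $\gamma$, then decide which elements of the rectangular lattice $\Lambda=\lambda\mathbb{Z}+i\mathbb{Z}$ are periods of classes lying in $\ker f$. Recall that $(a_1,b_1,a_2,b_2)$ is symplectic with $\mathrm{Per}(a_1)=\lambda$, $\mathrm{Per}(b_1)=i\lambda$, $\mathrm{Per}(a_2)=b$ and $\mathrm{Per}(b_2)=i$, and that $D=d^2$ forces $d\equiv e\pmod 2$, so $\lambda=\frac{d+e}{2}\in\mathbb{Z}$ and $b=\lambda(\lambda-e)$. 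Hence the period of a class $\alpha=(x_1,y_1,x_2,y_2)$ is $\mathrm{Per}(\alpha)=\lambda\bigl(x_1+x_2(\lambda-e)\bigr)+i\,(y_1\lambda+y_2)$. For $\gamma=(1,1,1,0)=a_1+b_1+a_2$ the intersection pairing gives $f(\alpha)=\alpha\cdot\gamma\equiv x_1+y_1+y_2\pmod 2$, so $\ker f=\{x_1+y_1+y_2\equiv 0\}$.

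Since $\Lambda$ is generated by $\lambda$ and $i$ and $\Lambda_p\subseteq\Lambda$ is a subgroup, it suffices to test whether each of these two generators is the period of some class in $\ker f$; then $\Lambda_p=\Lambda$ exactly when both are. I would therefore solve two systems. To get $\lambda\in\Lambda_p$ I impose $x_1+x_2(\lambda-e)=1$ and $y_1\lambda+y_2=0$; to get $i\in\Lambda_p$ I impose $x_1+x_2(\lambda-e)=0$ and $y_1\lambda+y_2=1$. In each case the two exact equations let me eliminate $x_1$ and $y_2$ and substitute into the parity constraint $x_1+y_1+y_2\equiv 0\pmod 2$.

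The point I expect to be the crux, and the feature distinguishing this cover from those labelled $2$, $4$, $5$ where one of $\lambda$ or $i$ was visibly the period of a single basis class, is that here neither generator is free, yet both reductions collapse to the \emph{same} congruence $x_2(\lambda-e)+y_1(1+\lambda)\equiv 1\pmod 2$ in the free integers $x_2,y_1$. This is solvable if and only if at least one coefficient is odd, i.e.\ if and only if $\lambda$ is even or $\lambda-e$ is odd; equivalently it fails precisely when $\lambda$ and $e$ are both odd. Hence $\Lambda_p=\Lambda$ iff $\lambda$ is even or $e$ is even.

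Finally I would translate this into the stated arithmetic condition. Since $e\in\{-1,0,1\}$, ``$e$ even'' means $e=0$, and since $\lambda=\frac{d+e}{2}$, ``$\lambda$ even'' means $d+e\equiv 0\pmod 4$. Therefore the cover associated with $(1,1,1,0)$ is primitive if and only if $e=0$ or $d+e\equiv 0\pmod 4$, as claimed. The only steps requiring care are the sign convention in computing $\alpha\cdot\gamma$ and the parity bookkeeping at the end; everything else is the same linear-congruence routine already used above.
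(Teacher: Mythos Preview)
Your proof is correct and follows essentially the same approach as the paper: identify $f$, then test whether each generator $\lambda$ and $i$ of $\Lambda$ is hit by $\mathrm{Per}$ restricted to $\ker f$. The only difference is presentational---where the paper constructs explicit solutions case by case (splitting on the parity of $\lambda$ and whether $e=0$ or $e=\pm1$), you eliminate $x_1$ and $y_2$ to reduce both equations to the single congruence $x_2(\lambda-e)+y_1(1+\lambda)\equiv 1\pmod 2$ and invoke solvability directly; this is a tidy repackaging of the same computation.
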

\begin{proof}
We have $f\colon (x_1, y_1, x_2, y_2)\mapsto y_1 - x_1 + y_2$.
The cover is primitive when the following two equations have a solution with $(x_1, y_1, x_2, y_2)$ in $\ker f$
\begin{align*}
    i &= x_1\lambda + x_2 b + i(y_1\lambda + y_2),\\
     \lambda &= x_1\lambda + x_2 b + i(y_1\lambda + y_2).
\end{align*}
There is a solution to the first equation when $\lambda$ is even: 
\[(x_1, y_1, x_2, y_2) = (0,1, 0, 1-\lambda).\]
There is also a solution when $\lambda$ is odd and $e = 0$:
\[(x_1, y_1, x_2, y_2) =(1, \lambda, -\lambda, 1-\lambda).\]
However if $\lambda$ is odd and $e=\pm 1$, then from $x_1\lambda + x_2 b= 0$ we get  
$x_1 + x_2(\lambda - e) = 0$ and $x_1$ must be even.
One must have $y_1\lambda + y_2 = 1$ and $y_1+y_2\equiv 0\mod 2$, which has no solution. 
For the second equation, there is a solution when $\lambda-e$ is odd:
\[(x_1, y_1, x_2, y_2) = (1+\lambda - e, 0,  -1, 0).\]
When $e=0$ and $\lambda$ is even, we also have the solution
\[(x_1, y_1, x_2, y_2) = (1+\lambda, 1, -1, -\lambda).\]
If $e=\pm 1$ and $\lambda$ is odd, a solution must have $x_1 \equiv 1\mod 2$ and $y_1\lambda + y_2 = 0$ with $y_1+y_2\equiv 1\mod 2$. This is not possible since $\lambda$ is odd.
Summarising, we have a solution to both equations if and only if $e=0$, or when $\lambda$ is even.
\end{proof}
Finally, we analyse the primitivity of the cover number ten.
\begin{lemma}
The cover associated with $\gamma = (0, 1, 0, 1)$ is primitive if and only if $d-e\equiv 0\mod 4$.
\end{lemma}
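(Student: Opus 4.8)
The plan is to follow the template of the preceding lemmas: read off the linear form $f$ that defines the cover, compute the period subgroup $\Lambda_p = \mathrm{Per}(\ker f)$ explicitly, and compare it with $\Lambda = \lambda\mathbb{Z} + i\mathbb{Z}$. Since $D = d^2$ is a square, I will use throughout that $\lambda = \frac{e+d}{2}$ is a positive integer and that $b = \lambda(\lambda - e)$.

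First I would determine $f$. For $\gamma = (0,1,0,1)$, the symplectic pairing against $\delta = (x_1, y_1, x_2, y_2)$ gives
\[
f(\delta) = \delta \cdot \gamma = x_1 + x_2 \pmod 2,
\]
so that $\ker f = \{(x_1,y_1,x_2,y_2) : x_1 + x_2 \text{ even}\}$. The key structural observation is that this constraint involves only $x_1, x_2$, leaving $y_1, y_2$ entirely free.

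Next I would split the period of an element of $\ker f$ as $\mathrm{Per}(\delta) = (x_1\lambda + x_2 b) + i(y_1\lambda + y_2)$. Because $y_1, y_2$ are unconstrained, the imaginary parts $y_1\lambda + y_2$ exhaust $\mathbb{Z}$, so $i\mathbb{Z} \subseteq \Lambda_p$ and therefore $\Lambda_p = R + i\mathbb{Z}$, where $R \subseteq \lambda\mathbb{Z}$ is the group of achievable real parts. The question thus reduces to whether $R = \lambda\mathbb{Z}$. Writing $x_1\lambda + x_2 b = \lambda\bigl(x_1 + x_2(\lambda - e)\bigr)$, we have $R = \lambda S$ with $S = \{x_1 + x_2(\lambda - e) : x_1 + x_2 \in 2\mathbb{Z}\}$. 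Since the even-sum sublattice of $\mathbb{Z}^2$ is spanned by $(1,1)$ and $(1,-1)$, the subgroup $S \subseteq \mathbb{Z}$ is generated by $1 + (\lambda - e)$ and $1 - (\lambda - e)$, whence $S = \gcd(2, 1 + \lambda - e)\mathbb{Z}$. Consequently $S = \mathbb{Z}$, i.e.\ $\Lambda_p = \Lambda$, exactly when $\lambda - e$ is even. Finally I would translate this via $\lambda - e = \frac{d-e}{2}$, so that $\lambda - e$ is even if and only if $d - e \equiv 0 \pmod 4$, which is the claimed criterion.

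I do not anticipate any genuine obstacle: once $f$ is computed the argument is a direct lattice calculation, and the only point requiring a little care is pinning down the generator of $S$ — equivalently, verifying that imposing $x_1 + x_2$ even actually shrinks the achievable real periods precisely when $\lambda - e$ is odd. As a consistency check, the answer fits the parity pattern of the neighbouring covers established above (for instance the cover numbered two is primitive iff $d - e \equiv 2 \pmod 4$), so the two congruence classes of $d - e \pmod 4$ are accounted for coherently.
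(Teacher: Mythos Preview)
Your proof is correct and follows essentially the same approach as the paper: both identify $f(x_1,y_1,x_2,y_2)=x_1+x_2$, observe that the imaginary period $i$ is automatically achieved, and reduce to whether $\lambda = x_1\lambda + x_2 b$ (equivalently $1 = x_1 + x_2(\lambda-e)$) is solvable with $x_1+x_2$ even, which holds exactly when $\lambda - e$ is even. Your write-up is somewhat more detailed in justifying the generator of $S$, but the underlying argument is the same.
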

\begin{proof}
It is primitive when one can write $x_1\lambda + x_2 b= \lambda$ with $x_1 + x_2\in 2\mathbb Z$. This happens when $1 = x_1 + x_2 (\lambda - e)$, which has a solution if and only if $\lambda-e$ is even.
\end{proof}
We can now tell which covers lifting to the odd component is primitive. Indeed the orbits of these covers under the action of $\mathrm{Aff}^+(X, \omega)$ are represented by the covers $1, 4, 8, 7, 10$, see \cref{tabEchoes}.
We summarise in \cref{tabEchoesPrimitifs} the echoes of $L$ obtained by a primitive cover.
Observe that if $e=\pm 1$, then $d+e\equiv d - e + 2\mod 4$.
\begin{table}[!h]
    \centering
\hspace*{-1cm}
\begin{tabular}{| c || c | c | c | c |}
\hline
     $(d, d-e)\mod 4$&  $(0, 0)$ & $(\pm 1, 0)$ & $(2, 2)$ & $(\pm 1, 2)$\\
\hline
     Echoes in $\mathcal H(2, 2)^\mathrm{hyp}$ & $\{3, 5, 9, 13\}$ & $\{3, 9, 13\}$  & $\{2, 3, 9, 13\}$ & $\{2, 5\}$, $\{3, 9, 13\}$ \\
\hline
    Echoes in $\mathcal H(2, 2)^\mathrm{odd}$ & \makecell{$\{1, 7, 11, 15\}$,\\ $\{8, 10, 12, 14\}$} & \makecell{$\{1, 6, 8, 11, 12, 15\}$,\\$\{4, 10, 14\}$}
    & \makecell{$\{1, 4, 11, 14\}$\\ $\{6, 7, 8, 12\}$} & \makecell{$\{1, 6, 8, 11, 12, 15\}$, \{7\}}\\
\hline
\end{tabular}
    \caption{The echoes of square-tiled surfaces by primitive covers.}
    \label{tabEchoesPrimitifs}
\end{table}

\cref{thTypes} follows from this analysis.
\begin{proof}
The types of branched covers of degree $d$ of \cref{thTypes} correspond to the $\mathrm{SL}_2(\mathbb Z)$-orbits of square-tiled surfaces that are lifts of a reduced square-tiled surface in $\mathcal H(2)$ by a primitive cover. 
\end{proof}

It follows from \cite[Theorem 5.3]{MCM05} that when $d$ is odd, we have $d-e\equiv 0\mod 4$ if and only if the spin invariant of the curve generated by $L = L(\frac{d^2-1}{4}, e)$ vanishes. This means that reduced square-tiled surfaces in the $\mathrm{GL}_2^+(\mathbb R)$-orbit of $L$ have exactly one Weierstrass point on $\mathbb Z+i\mathbb Z$, see \cite[Theorem 6.1]{MCM05}, while the ones in the other component of $W_D$ have three integral Weierstrass points.
This way of distinguishing the orbits of square-tiled surfaces was introduced by Hubert and Lelièvre in \cite{HL06}.

\subsection{Counting square-tiled surfaces}
Remark that our proof gives the indices of the Veech groups of the covers.
In particular, since Leli\`evre and Royer in \cite{LR06} counted the sizes of the $\mathrm{SL}_2(\mathbb Z)$-orbits of reduced square-tiled surface in $\mathcal H(2)$, we have formulas for the sizes of $\mathrm{SL}_2(\mathbb Z)$-orbits of square-tiled surface in $\widetilde{\mathcal H}(2)$.
As an example, let us count the number of square-tiled surfaces in $\widetilde{\mathcal H}(2)$, with $d=22$ squares, odd Arf invariant and lattice of absolute periods $\mathbb Z+i\mathbb Z$.
By the results gathered in \cref{tabEchoesPrimitifs}, they form four $\mathrm{SL}_2(\mathbb Z)$-orbits.
From \cite{LR06}, the two $\mathrm{SL}_2(\mathbb Z)$-orbits of reduced square-tiled surfaces in $\mathcal H(2)$ with an odd number $n$ of squares are of sizes
$a_n = \frac{3}{16}(n-1)n^2\prod_{p\mid n} (1-\frac{1}{p^2})$ and $b_n = \frac{n-3}{n-1}a_n$. 
The numbers $a_n$ are the sizes of the orbits with vanishing spin invariant, while $b_n$ are the sizes of the others. 
Thus the $a_n$, resp. $b_n$, correspond to the orbits whose lifts are classified on the second, resp. last, column of \cref{tabEchoes}.
For $n=11$, we have $a_n = 225$ and $b_n = 180$.
It follows from \cref{veech_cover} that the sizes of our orbits lifting origamis with vanishing spin invariant are  $1350 = 6\cdot 225$ and $900 = 4\cdot 225$. The remaining orbits are of sizes $1080 = 6\cdot 180$ and $180$.
\section{Remarks in higher genus}\label{sectionHigher}

In this section we make some remarks on Teichm\"uller curves in higher genus, obtained by covering constructions.
We first show that there are arbitrarily large numbers of echoes of Teichm\"uller curves of genus two, in higher genus.
We then show that, as a corollary of a theorem of Funar and Lochak, one can find copies of a given primitive Teichm\"uller curve in moduli spaces of higher genera.

\subsection{There are arbitrarily large numbers of pure echoes}
As a consequence of \cref{commutationReal}, we show that a Teichm\"uller curve $V\subset \mathcal M_g$ can have an arbitrarily large number of \emph{pure} echoes in higher genera moduli spaces. 
Since square-tiled surfaces are dense in each moduli space $\Omega\mathcal M_g$, there can be infinite numbers of echoes of a given Teichm\"uller curve in a given $\mathcal M_g$.
However, this is not possible with pure echoes.
\begin{lemma}Let $V\subset \mathcal M_g$ be a Teichm\"uller curve.
There is only a finite number of pure echoes of $V$ in $\mathcal M_{g'}$ for each $g'$.
\end{lemma}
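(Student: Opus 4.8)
The plan is to reduce the statement to the elementary fact that a closed surface admits only finitely many connected unramified covers of any fixed degree. Fix once and for all a representative $(X_0,\omega_0)\in\Omega\mathcal T(\Sigma)$ of a generator of $V$, where $\Sigma$ is a closed surface of genus $g$. Since lifting commutes with the $\SL{\R}$-action on linear charts, the Teichmüller disc of $p^*(A\cdot\omega_0)=A\cdot p^*(\omega_0)$ coincides with that of $p^*(\omega_0)$; combined with the invariance of the lifting map under the diagonal $\Mod(\Sigma)$-action (\cref{invariance}), this shows that every pure echo of $V$ in $\mathcal M_{g'}$ is the Teichmüller curve generated by $p^*(X_0,\omega_0)$ for some connected unramified cover $p\colon S\to\Sigma$ with $S$ of genus $g'$. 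It therefore suffices to bound the number of such covers, since each pure echo is generated by a lift through one of them, and different covers producing the same disc only decrease the count.

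First I would pin down the degree. For a connected unramified cover $p$ of degree $d$ one has $\chi(S)=d\,\chi(\Sigma)$, that is $2-2g'=d(2-2g)$. When $g\geqslant 2$ this forces the single value $d=\frac{g'-1}{g-1}$, and no cover (hence no pure echo) exists unless this is a positive integer. The case $g=1$ is trivial, as then every unramified cover of $\Sigma$ is again of genus one, so no pure echoes occur in $\mathcal M_{g'}$ for $g'>1$. Thus all covers relevant to a fixed $\mathcal M_{g'}$ share one fixed degree $d$.

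Next I would count the covers of this fixed degree. Up to the upper equivalence of \cref{lifts}, connected degree-$d$ covers of $\Sigma$ correspond bijectively to conjugacy classes of index-$d$ subgroups of $\pi_1(\Sigma)$, equivalently to transitive homomorphisms $\pi_1(\Sigma)\to\mathfrak S_d$ up to conjugacy. Since $\pi_1(\Sigma)$ is generated by $2g$ elements and $\mathfrak S_d$ is finite, there are at most $|\mathfrak S_d|^{2g}$ such homomorphisms, hence finitely many covers. Consequently there are finitely many lifts $p^*(X_0,\omega_0)$, and \emph{a fortiori} finitely many pure echoes of $V$ in $\mathcal M_{g'}$.

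The only real content lies in the first reduction: it is precisely the purity hypothesis (no branching) that, through multiplicativity of the Euler characteristic, forces a single degree and hence a finite family of covers. I expect this to be the crux of the argument, in sharp contrast with general echoes, where branched covers of unbounded degree yield infinitely many Teichmüller curves in a fixed $\mathcal M_{g'}$, as the density of square-tiled surfaces in $\Omega\mathcal M_{g'}$ already demonstrates.
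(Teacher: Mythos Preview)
Your proof is correct and follows essentially the same route as the paper's: dispose of the genus-one case, then use Riemann--Hurwitz to fix the degree of any unramified cover $S\to\Sigma$ with $S$ of genus $g'$, and conclude by the finiteness of covers of a given degree. The paper's argument is much terser (two sentences), but your added detail---reducing to a single generator $(X_0,\omega_0)$ via \cref{invariance} and the $\SL\R$-equivariance of lifting, and the explicit bound $|\mathfrak S_d|^{2g}$---is all correct and fills in what the paper leaves implicit.
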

\begin{proof}
There is only one Teichm\"uller curve in $\mathcal M_1$, thus we suppose that $g\geqslant 2$.
There are only finitely many covers of genus $g'$ of a given genus $g$ surface.
\end{proof}

Recall that we call cyclic echoes of a Teichm\"uller curve $V\subset \mathcal M_g$ the Teichm\"uller curves generated by $p^*(\omega)$ where $\omega$ generates $V$ and $p$ is a normal cover with cyclic deck transformation group.
The number of cyclic echoes of Teichm\"uller curves generated by a square-tiled surface in $\mathcal H(2)$ can be made arbitrarily large.

\begin{prop}\label{PropArbitrarilyLargeNumber}
Let $b > 1$ be a square and $D = 4b$. For any odd $n\geqslant 3$ coprime with $b$,
the number of cyclic echoes of $W_D$ in $\mathcal M_{n+1}$ is at least $\varphi(n)$.
\end{prop}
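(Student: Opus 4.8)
The plan is to turn the statement into an orbit count and then feed it the real-multiplication splitting of homology modulo $n$. First I would establish the cyclic analogue of \cref{corEchoes}: writing $(X,\omega)=L(b,0)$ for the origami generating $W_D$ (here $\lambda=\sqrt b\geqslant 2$ is an integer because $b$ is a square), the cover attached to a primitive class $\phi\in H_1(\Sigma,\mathbb Z/n\mathbb Z)$ pulls $(X,\omega)$ back to a translation surface in $\mathcal M_{n+1}$, and two such covers generate the same Teichmüller curve exactly when the classes lie in one orbit of the monodromy group $G=\rho(\mathrm{Aff}^+(X,\omega))$ acting on $H_1(\Sigma,\mathbb Z/n\mathbb Z)$. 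To license this I would first run the argument of \cref{lemmaNoAuto} in the cyclic setting: since $\lambda\neq 1$, the lift has a single nontrivial translation with quotient of genus two, so \cref{same} applies. Thus it suffices to produce at least $\varphi(n)$ distinct $G$-orbits of cyclic covers.

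The engine is \cref{commutationReal}: every element of $G$ commutes with the real-multiplication endomorphism $T$, which satisfies $T^2=b$. Because $n$ is odd and coprime to $b$, the element $2\sqrt b$ is invertible modulo $n$, so $x^2-b=(x-\sqrt b)(x+\sqrt b)$ splits and I would decompose $H_1(\Sigma,\mathbb Z/n\mathbb Z)=V_+\oplus V_-$ into the $\pm\sqrt b$-eigenspaces of $T$, each a free rank-two $\mathbb Z/n\mathbb Z$-module carrying a non-degenerate restriction of the symplectic form, and each preserved by $G$. This realises $G$ modulo $n$ inside $\SL{\mathcal O_D}$ reduced to $\SL{\mathbb Z/n\mathbb Z}\times\SL{\mathbb Z/n\mathbb Z}$, the two factors acting on $V_+$ and $V_-$ and tied together by the single $\mathcal O_D$-linear map they come from. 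Since $D=4b$ is a square, the period map of $\omega$ annihilates $V_-$ and detects only $V_+$, so the absolute period lattice $\Lambda_p=\mathrm{Per}(\ker\phi)$ of the cover depends solely on $\phi|_{V_+}$; this gives a first, coarse $G$-invariant of an echo that I would use to organise the orbits.

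The decisive step is to attach to a primitive class $\phi=(\phi_+,\phi_-)$ with both components primitive a genuine $G$-invariant taking at least $\varphi(n)$ values. The plan is to normalise $\phi_+$ to a fixed primitive vector of $V_+$ — possible because $G$ surjects onto $\SL{\mathbb Z/n\mathbb Z}$ on that factor — and then read off from $\phi_-$ a comparison class in $(\mathbb Z/n\mathbb Z)^\times$ recording the relative position of $\ker(\phi_+)$ and $\ker(\phi_-)$ through the real-multiplication identification; this class is invariant under the residual stabiliser and under rescaling $\phi$ by a unit of $\mathbb Z/n\mathbb Z$, and as $\phi$ varies it attains every value in $(\mathbb Z/n\mathbb Z)^\times$. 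I expect the hard part to be exactly the faithful bookkeeping of the image of $G$ modulo $n$: one must show that $G$ lands in the Galois-twisted subgroup of $\SL{\mathbb Z/n\mathbb Z}\times\SL{\mathbb Z/n\mathbb Z}$ singled out by the $\mathcal O_D$-structure, so that the two eigencomponents cannot be moved independently and the comparison class is preserved, rather than collapsing under a larger image. Granting this, the $\varphi(n)$ values of the invariant yield $\varphi(n)$ distinct echoes; as all the covers are unramified these are \emph{pure} echoes, and since $\varphi(n)\to\infty$ this simultaneously supplies the arbitrarily large numbers of pure echoes asserted in \cref{thHigher}.
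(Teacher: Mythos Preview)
Your setup coincides with the paper's: reduce to counting $\mathrm{Aff}^+(X,\omega)$-orbits on primitive classes in $H_1(\Sigma,\mathbb Z/n\mathbb Z)$ via the cyclic analogue of \cref{corEchoes}, then use \cref{commutationReal} to diagonalise $T$ modulo $n$ and split $H_1(\Sigma,\mathbb Z/n\mathbb Z)=V_+\oplus V_-$ with the monodromy group $G$ acting block-diagonally. Up to this point you and the paper agree.

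Where you diverge is in the ``decisive step''. You propose to build a $(\mathbb Z/n\mathbb Z)^\times$-valued comparison invariant between the two components, and you correctly flag that this needs $G$ to sit in a Galois-twisted diagonal of $\SL{\mathbb Z/n\mathbb Z}\times\SL{\mathbb Z/n\mathbb Z}$. This is the gap. Here $D=4b$ is a perfect square, so $\mathcal O_D=\mathbb Z[T]/(T^2-b)$ is not a domain and the Galois involution degenerates: after inverting $2b'$ one has $\mathcal O_D\simeq\mathbb Z\times\mathbb Z$, and ``$\SL{\mathcal O_D}$ modulo $n$'' is the full product $\SL{\mathbb Z/n\mathbb Z}\times\SL{\mathbb Z/n\mathbb Z}$ with no tying of the factors. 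Commutation with $T$ therefore gives you only block-diagonality, not the coupled structure your invariant needs. Your auxiliary claim that $G$ surjects onto $\SL{\mathbb Z/n\mathbb Z}$ on $V_+$ is likewise unproven (and depends delicately on the Veech group of the specific origami). So the hard part you isolate is not a bookkeeping issue but an assertion that is unlikely to hold in this arithmetic setting, and without it your invariant collapses.

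The paper avoids all of this. Once $A\in G$ is block-diagonal, the order of the $V_-$-component of a vector is preserved regardless of what the blocks are. Writing vectors in the eigenbasis $(u_1,u_2,u_3,u_4)$, the elements $(1,0,x,0)$ are primitive for every $x\in\mathbb Z/n\mathbb Z$, and two of them can lie in the same $G$-orbit only if their $V_-$-parts $(x,0)$ have the same order, i.e.\ only if $x$ and $x'$ generate the same subgroup of $\mathbb Z/n\mathbb Z$. No information about the image of $G$ beyond block-diagonality is used. This is both simpler and more robust than your route: drop the Galois-twist and surjectivity assumptions and use the order of $v_-$ directly.
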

In \cref{PropArbitrarilyLargeNumber}, the function $\varphi$ is the Euler's totient function.
Let us first observe that as in \cref{corEchoes}, the echoes are classified by the action of $\mathrm{Aff}^+(X, \omega)$ on the set of primitive elements of $ H_1(\Sigma, \mathbb Z/n\mathbb Z)$.
Actually, the proof of \cref{lemmaNoAuto} shows the following more general result.
\begin{cor}\label{transG}
Let $(X, \omega)\in \Omega\mathcal T(\Sigma)$ with a unique double zero, and $p\colon S_g\to \Sigma$ be an unramified normal cover with deck transformation group $G$.
Suppose that $(X, \omega)$ generates a Teichm\"uller curve $V$ that is not $W_9$.
\begin{enumerate}
    \item The translation group of $p^*(\omega)$ is isomorphic to $G$.
    \item The echoes of $V$ by such covers are in correspondence with the action of $\mathrm{Aff}^+(X, \omega)$ on the set of these covers.

\end{enumerate}
\end{cor}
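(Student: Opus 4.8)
The plan is to prove part (1) first and then read off part (2) from it, following the proof of \cref{lemmaNoAuto} but with the degree-two cover replaced by the normal cover $p$. The easy inclusion is $G\subseteq \mathrm{Tr}(p^*\omega)$: since $p^*\omega$ is pulled back, every deck transformation is expressed as the identity in the linear charts $\phi\circ p$, hence is a translation. The whole content is the reverse inclusion, and I would obtain it by producing a finite set of saddle connections on $S$ that is preserved by all of $\mathrm{Tr}(p^*\omega)$ and on which $G$ is already seen to act transitively and freely.

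Since the translation group and the echo classification are $\mathrm{GL}_2^+(\mathbb R)$-invariant, I may assume $(X,\omega)=L(b,e)$ with $\lambda\neq 1$, as in \cref{lemmaNoAuto}. There I would reuse the distinguished saddle connection $s$: the unique one of $L$ of period $i$ such that turning by $\pi$ at its endpoint again gives period $i$. This property is phrased purely in terms of holonomy and angles, so it is preserved both by the local isometry $p$ and by every translation of $S$. Consequently a saddle connection of $S$ has the property if and only if it projects to $s$, i.e. if and only if it is one of the components of $p^{-1}(s)$; and $\mathrm{Tr}(p^*\omega)$ permutes exactly this set. Fixing an interior point $x\in s$, these components are indexed bijectively by $p^{-1}(x)$, so there are $\deg p=|G|$ of them, and $G$ permutes them as it permutes $p^{-1}(x)$, which is transitively because $p$ is normal.

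To finish (1) I would check that $\mathrm{Tr}(p^*\omega)$ acts \emph{freely} on these $|G|$ arcs: a translation fixing one of them setwise preserves its direction and length, hence fixes it pointwise, and a translation fixing a point is the identity. A free action containing the transitive subgroup $G$ is itself transitive, so it acts simply transitively on a set of $|G|$ elements; therefore $|\mathrm{Tr}(p^*\omega)|=|G|$, and with $G\subseteq \mathrm{Tr}(p^*\omega)$ this yields $\mathrm{Tr}(p^*\omega)=G$.

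For (2), the identity $\mathrm{Tr}(p^*\omega)=G$ makes the descent of \cref{lemmeMartens} automatic: an isomorphism $F\colon S\to S$ between two such lifts $p_0^*\omega_0$ and $p_1^*\omega_1$ conjugates their full translation groups, which are now the (normal) deck groups of $p_0$ and $p_1$, so $F$ passes to the quotients and induces $f\colon\Sigma\to\Sigma$ with $f\circ p_0=p_1\circ F$ and $f\cdot\omega_0=\omega_1$. The analogues of \cref{same} and \cref{sameDisc} then hold verbatim, and since a Teichm\"uller curve is a single Teichm\"uller disc up to scaling, the echoes are classified by the orbits of $\mathrm{Aff}^+(X,\omega)$ on the set of these covers, exactly as in \cref{corEchoes}. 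The only delicate point, and hence the main obstacle, is entirely within (1): one must be sure that the distinguishing property genuinely isolates the lifts of $s$ among \emph{all} saddle connections of $S$ of period $i$, so that no arc coming from a different downstairs saddle connection is spuriously included; this is precisely where the uniqueness of $s$ on the base, guaranteed by $\lambda\neq 1$ (that is, $V\neq W_9$), is used.
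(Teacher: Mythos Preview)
Your proof is correct and follows essentially the same approach as the paper: both use the distinguished saddle connection from \cref{lemmaNoAuto}, observe that its $|G|$ lifts are permuted by $\mathrm{Tr}(p^*\omega)$ and that a translation is determined by the image of a single one, and then descend isomorphisms as in \cref{lemmeMartens} to conclude (2). Your write-up spells out the freeness of the action and the reduction to $L(b,e)$ more explicitly, but the ideas are identical.
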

\begin{proof}
The group of deck transformations $G$ is naturally embedded in the translation group.
On the translation surface $p^*(\omega)$, there are $|G|$ saddle connections $\sigma_1,\ldots, \sigma_{|G|}$ lifting the one of the proof of \cref{lemmaNoAuto}.
Any translation must permute the $\sigma_i$, and is determined by the image of $\sigma_1$. The translation group has thus order at most $|G|$.
For the second affirmation, it suffices to notice that any isomorphism $F\colon S_g\to S_g$ sending one lift to the other must then conjugate the corresponding deck transformations as in \cref{lemmeMartens}.
\end{proof}
We can now prove \cref{PropArbitrarilyLargeNumber}.
\begin{proof}
Let $b' = \sqrt b$.
The matrix $T$ admits $u_1 = (b', 0, 1, 0)$ and $u_2 = (0, b', 0, 1)$ as eigenvectors associated with the eigenvalue $b'$. 
It also admits $u_3 = (b', 0, -1, 0)$ and $(0, b', 0, -1)$ as eigenvectors with eigenvalue $-b'$.
Since the monodromy group commutes with $T$, any matrix $A\in \rho(\mathrm{Aff}^+(X, \omega))$ must preserve these sets.

The following matrix is invertible modulo $n$ since its determinant is $4b$:
\[
\begin{pmatrix}
    b' & 0 & b' & 0\\
    0 & b' & 0 & b'\\
    1 & 0 & -1 & 0 \\
    0 & 1 & 0 & -1
\end{pmatrix}.
\]
Thus $(u_1, u_2, u_3, u_4)$ forms a basis of $H_1(\Sigma, \mathbb Z/n\mathbb Z)$, and we now write $A$ in this basis.
The matrix $A$ is now block-diagonal. Thus the elements $(1, 0, x, 0)\in H_1(\Sigma, \mathbb Z/n\mathbb Z)$ with $x\in \mathbb Z/n\mathbb Z$ are not in the same orbit for $x$ generating different subgroups of $\mathbb Z/n\mathbb Z$. There are $\varphi(n)+1$ of these groups.
\end{proof}
Note that one could improve the bound to $2\varphi(n) + 1$ in the preceding argument by also considering the elements of the form $(x, 0, 1, 0)$. 
One can then count primitive elements of the form $(x, 0, y, 0)$, where both $x$ and $y$ do not generate $\mathbb Z /n\mathbb Z$.
We can also use the fact that $A$ is symplectic to restrict the possibilities of its action on homology even further.

\subsection{Any Teichm\"uller curve appears in infinitely many moduli spaces}
Given $V$ a hyperbolic surface of finite type, it is known that there are only finitely many Teichm\"uller curves $V\to \mathcal M_g$ for each genus $g$, see \cite{MC09}.
A result of Funar and Lochak \cite[Theorem 1.4]{FL18} states that every closed surface $\Sigma_g$ of genus $g\geqslant 2$ admits infinitely many surjections $\rho\colon \pi_1(\Sigma_g)\to G$ with $G$ simple and finite, such that $\ker \rho$ is a characteristic subgroup of $\pi_1(\Sigma_g)$. 
As a consequence of this result and our observations, we show that for a Teichm\"uller curve $V\to \mathcal M_g$, there exist infinitely many $g' > g$ and Teichm\"uller curves $V\to \mathcal M_{g'}$.
It is proven in \cite{H06} that there are infinitely many copies of the Teichm\"uller curve in $\mathcal M_1$ generated by any torus, namely $\mathbb H^2/\mathrm{SL}_2(\mathbb Z)$.

\begin{prop}
There are infinitely many copies of a given primitive Teichm\"uller curve in higher genera moduli spaces.
\end{prop}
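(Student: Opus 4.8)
The plan is to feed the characteristic covers of Funar and Lochak into the covering machinery of \cref{rmkInclusion}, using that a characteristic subgroup cannot be moved by any mapping class, hence cannot be moved by the affine group, so the Veech group of the lift is forced to equal that of the base.

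First I would fix $(X,\omega)\in\Omega\mathcal T(\Sigma_g)$ generating the primitive Teichm\"uller curve $V$ and apply \cite[Theorem 1.4]{FL18} to obtain infinitely many surjections $\rho_i\colon\pi_1(\Sigma_g)\to G_i$ onto finite simple groups whose kernels $K_i=\ker\rho_i$ are characteristic in $\pi_1(\Sigma_g)$. Each $K_i$ defines an unramified normal cover $p_i\colon\Sigma_{g_i}\to\Sigma_g$ with deck group $G_i$. Since a finitely generated group has only finitely many subgroups of each finite index, having infinitely many distinct characteristic subgroups forces the indices $[\pi_1(\Sigma_g):K_i]=|G_i|$ to be unbounded; by Riemann--Hurwitz the genera $g_i$ are therefore unbounded, so the $p_i$ live over infinitely many distinct moduli spaces $\mathcal M_{g_i}$.

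Next I would exploit that $K_i$ is characteristic. Through the Dehn--Nielsen--Baer isomorphism $\Mod(\Sigma_g)\cong\mathrm{Out}^+(\pi_1(\Sigma_g))$, every mapping class acts as an outer automorphism of $\pi_1(\Sigma_g)$ and hence fixes the characteristic subgroup $K_i$; equivalently it fixes $p_i$ under the pushforward action on covers. In particular the image of $\mathrm{Aff}^+(X,\omega)$ in $\Mod(\Sigma_g)$ stabilises $p_i$, so $A\cdot[p_i]=[p_i]$ for every $A\in\mathrm{SL}(X,\omega)$. By \cref{rmkInclusion}, each such $A$ then lifts to an affine automorphism of $p_i^*(X,\omega)$ with the same derivative, giving the inclusion $\mathrm{SL}(X,\omega)\subset\mathrm{SL}(p_i^*(X,\omega))$.

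It remains to establish the reverse inclusion, which I expect to be the main obstacle, since \cref{rmkInclusion} only furnishes one direction and the equality version of \cref{veech_cover} is unavailable here (the deck group $G_i$ contributes many translations, so the ``one non-trivial translation'' hypothesis fails). Here I would invoke primitivity: $p_i^*(X,\omega)$ is a translation cover of $(X,\omega)$, and since $V$ is primitive, $(X,\omega)$ does not arise as a translation cover of any lower-complexity surface, so by uniqueness of the primitive core it is itself the primitive core of $p_i^*(X,\omega)$. Every affine automorphism of a translation surface descends to its primitive core (the covering principle underlying \cite[Corollary 2.2]{MCMPrym}), whence $\mathrm{SL}(p_i^*(X,\omega))\subset\mathrm{SL}(X,\omega)$. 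Combining the two inclusions gives $\mathrm{SL}(p_i^*(X,\omega))=\mathrm{SL}(X,\omega)$, so that $\mathbb H^2/\mathrm{SL}(p_i^*(X,\omega))=\mathbb H^2/\mathrm{SL}(X,\omega)=V$: the curve generated by $p_i^*(X,\omega)$ is a copy of $V$ in $\mathcal M_{g_i}$. As the $g_i$ are unbounded, this yields copies of $V$ in infinitely many moduli spaces.
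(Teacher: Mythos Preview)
Your proof is correct and follows essentially the same route as the paper: Funar--Lochak characteristic covers give $\Mod(\Sigma_g)$-invariant covers, \cref{rmkInclusion} yields one inclusion of Veech groups, and primitivity via M\"oller/McMullen \cite{M06,MCMPrym} yields the other. The only omission is the genus one case, which the paper handles separately by citing Herrlich \cite{H06} (Funar--Lochak requires $g\geqslant 2$); otherwise your argument matches the paper's, with slightly more detail on why the genera are unbounded and on the Dehn--Nielsen--Baer step.
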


\begin{proof}
This is proven by Herrlich in \cite{H06} for the only Teichm\"uller curve in $\mathcal M_1$, hence we fix $\Sigma_g$ a closed surface of genus $g\geqslant 2$.
Let $V\subset \mathcal M_g$ be a primitive Teichm\"uller curve and $\rho\colon \pi_1(\Sigma_g)\to G$ a surjection onto a finite group with characteristic kernel.
Fix $(X, \omega)\in \Omega\mathcal T(\Sigma_g)$ generating $V$.
Let $p\colon S'\to \Sigma_g$ be the cover associated with $\ker \rho$.
The Teichm\"uller curve generated by $p^*(\omega)$ is isomorphic to $V$.
Indeed, since $\ker \rho$ is characteristic, $p$ is stabilised by the whole group $\Mod(\Sigma_g)$.
Therefore by \cref{rmkInclusion}, the Veech group of $p^*(\omega)$ contains $\mathrm{SL}(X, \omega)$.
By results of M\"oller \cite[Theorem 2.6]{M06}, see also \cite[Corollary 2.2]{MCMPrym}, the Veech group of $p^*(\omega)$ is a subgroup of $\mathrm{SL}(X, \omega)$.
We thus get a copy of $V = \mathbb H^2/\mathrm{SL}(X, \omega)$ in $\mathcal M_{g'}$, where $g' = (g-1)|G| + 1$.
There exist arbitrarily large such $G$ by \cite{FL18}.
\end{proof}
\bibliographystyle{alpha}
\bibliography{biblio}
\end{document}